\def\siam{0}
\newtheorem{theoreml}{Theorem}
\newtheorem{example}{Example}
\newtheorem{remark}{Remark}
\theoremstyle{definition}
\newtheorem{definition}{Definition}
\newtheorem{example}{Example}
\newtheorem{remark}{Remark}
\theoremstyle{plain}
\newtheorem{proposition}{Proposition}
\newtheorem{lemma}{Lemma}
\newtheorem{theorem}{Theorem}
\newtheorem*{theorem*}{Theorem}
\newtheorem{theoreml}{Theorem}
\newenvironment{reptheorem}[1]{\bgroup\begin{theorem}}{\end{theorem}\addtocounter{theorem}{-1}\egroup}
\newcommand{\cF}{\mathcal F}
\newcommand{\cM}{\mathcal M}
\newcommand{\RR}{\mathbb{R}}
\newcommand{\ZZ}{\mathbb{Z}}
\definecolor{darkgreen}{rgb}{0,0.8,0}
\DeclareMathOperator{\trace}{{\bf trace}}
\DeclareMathOperator{\conv}{conv}
\DeclareMathOperator{\linspan}{span}
\DeclareMathOperator{\Aut}{Aut} 
\DeclareMathOperator{\id}{id} 
\DeclareMathOperator{\1}{{\mathbf 1}}
\DeclareMathOperator{\Pol}{Pol}
\let\TH\relax
\DeclareMathOperator{\TH}{TH}
\DeclareMathOperator{\diag}{diag}
\renewcommand{\imath}{\boldsymbol{i}}
\newcommand{\CUT}{\text{CUT}}
\newcommand{\PAR}{\text{PAR}} 
\newcommand{\EVEN}{\text{EVEN}} 
\newcommand{\parity}{\text{parity}}
\newcommand{\cube}{\text{cube}}
\newcommand{\fS}{\mathfrak{S}} 
\renewcommand{\S}{\mathbf{S}} 
\newcommand{\isom}{\cong} 
\renewcommand{\tilde}[1]{\widetilde{#1}}
\title{Equivariant semidefinite lifts
\if\siam1
\\
\fi
 and sum-of-squares hierarchies}
\author{Hamza Fawzi \and James Saunderson \and Pablo A. Parrilo\thanks{The authors are with
    the Laboratory for Information and Decision Systems, Department of
    Electrical Engineering and Computer Science, Massachusetts
    Institute of Technology, Cambridge, MA 02139. Email:
    \texttt{\{hfawzi,jamess,parrilo\}@mit.edu}. This research was funded in part by AFOSR FA9550-11-1-0305 and AFOSR FA9550-12-1-0287.}}
\renewcommand\footnotemark{}
\date{April 15, 2015}
\begin{document}

\maketitle

\if\siam1
\slugger{siopt}{xxxx}{xx}{x}{x--x}
\pagestyle{myheadings}
\thispagestyle{plain}
\markboth{}{}
\fi

\begin{abstract}
A central question in optimization is to maximize (or minimize) a linear function over a given polytope $P$. To solve such a problem in practice one needs a concise description of the polytope $P$. In this paper we are interested in representations of $P$ using the positive semidefinite cone: a \emph{positive semidefinite lift} (psd lift) of a polytope $P$ is a representation of $P$ as the projection of an affine slice of the positive semidefinite cone $\S^d_+$. Such a representation allows linear optimization problems over $P$ to be written as semidefinite programs of size $d$. Such representations can be beneficial in practice when $d$ is much smaller than the number of facets of the polytope $P$. In this paper we are concerned with so-called \emph{equivariant} psd lifts (also known as \emph{symmetric} psd lifts) which respect the symmetries of the polytope $P$.

We present a representation-theoretic framework to study equivariant psd lifts of a certain class of symmetric polytopes known as \emph{orbitopes}. Our main result is a \emph{structure theorem} where we show that any equivariant psd lift of size $d$ of an orbitope is of sum-of-squares type where the functions in the sum-of-squares decomposition come from an invariant subspace of dimension smaller than $d^3$. We use this framework to study two well-known families of polytopes, namely the parity polytope and the cut polytope, and we prove exponential lower bounds for equivariant psd lifts of these polytopes.
\end{abstract}



\section{Introduction}

\subsection{Preliminaries and definitions}

Linear programming is the problem of computing the maximum (or minimum) of a linear function over a polytope $P$.
In order to solve such a problem in practice one needs to have an efficient description of the polytope $P$. An important technique to obtain such efficient formulations which has received renewed attention recently is that of \emph{extended formulations} or \emph{lifts}: the idea is to represent $P$ as the projection of a higher-dimensional convex set $Q$ which has a simpler description than $P$. For the purpose of optimizing a linear function, one can work over the higher-dimensional convex set $Q$ instead of $P$: indeed if $P = \pi(Q)$ where $\pi$ is a linear projection map and $\ell$ is the linear objective function then we have: 
\begin{equation}
 \label{eq:lifted-opt}
 \max_{x \in P} \ell(x) = \max_{y \in Q} (\ell \circ \pi)(y).
\end{equation}
There are many examples where extended formulations allow us to reduce the size of optimization problems. Consider for example the $\ell_1$ ball in $\RR^n$ which requires $2^n$ linear inequalities for its description: using a simple construction one can show that this polytope is the projection of a polytope in $\RR^{2n}$ which can be described using $2n$ linear inequalities only.

When lifting the polytope $P$ to a higher-dimensional convex set $Q$, it is natural to consider convex sets $Q$ of the form $Q = K\cap L$ where $K$ is a proper cone (i.e., a convex, closed, full-dimensional and pointed cone) and $L$ is an affine subspace. Indeed in this case the optimization problem \eqref{eq:lifted-opt} over $Q$ is a \emph{conic program} \cite{ben2001lectures}. When the cone $K$ is the nonnegative orthant $K = \RR^m_+$ a $K$-lift is usually referred to as an \emph{LP lift} since the resulting optimization problem is a linear program. Another important special case, which is the main focus of this paper, is when $K$ is the cone of positive semidefinite matrices $K = \S^d_+$. In this case the lift is called a \emph{psd lift} of size $d$ and the resulting optimization problem \eqref{eq:lifted-opt} over $Q$ is a semidefinite program.

\begin{definition}
Let $P \subset \RR^n$ be a polytope. We say that $P$ has a \emph{psd lift} of size $d$ if there exist an affine subspace $L \subset \S^d$ and a linear map $\pi:\S^d\rightarrow \RR^n$ such that $P = \pi(\S^d_+\cap L)$.
\end{definition}

The polytope $P \subset \RR^n$ of interest often has a lot of symmetries, i.e., it is invariant under a certain group of transformations of $\RR^n$.  For example the square $[-1,1]^2$ in the plane is invariant under permutation of coordinates; and the regular $n$-gon is invariant under the action of the dihedral group $D_{2n}$ consisting of $n$ rotations and $n$ reflections.  For such symmetric polytopes one may be interested in lifts that ``respect'' this symmetry. In the context of linear programming, such lifts were  first studied by Yannakakis \cite{yannakakis1991expressing} where he showed that any symmetric LP lift of the matching polytope must have exponential size. In the more recent works \cite{kaibel2010symmetry,goemans2009smallest,pashkovich2009tight,gouveia2011lifts}, it was shown that the symmetry requirement can have a significant impact on the size of the smallest lifts, i.e., there are polytopes (like the permutahedron for example) where there is a large gap between the smallest LP lift and the smallest \emph{symmetric} LP lift. The recent work of Chan el al. \cite{chan2013approximate} establishes, among others, a strong connection between symmetric LP lifts and the Sherali-Adams hierarchy: it is shown that the approximation quality of any polynomial-size symmetric LP for the maximum cut problem can be achieved by a constant number of rounds of the Sherali-Adams hierarchy (in fact the paper \cite{chan2013approximate} establishes a connection between general LP formulations, possibly nonsymmetric, and the Sherali-Adams hierarchy however the results are stronger in the case of symmetric LPs).

The works cited above studied the symmetry requirement in the context of LP lifts. In this paper we are interested in \emph{psd lifts} that respect the symmetries of the polytope $P$.
 Intuitively a psd lift $P = \pi(Q)$ where $Q=\S^d_+\cap L$ respects the symmetry of $P$ if any transformation $g \in GL(\RR^n)$ which leaves $P$ invariant can be lifted to a transformation $\Phi(g) \in GL(\S^d)$ that preserves both $\S^d_+$ and $L$, and so that the following equivariance relation holds: for any $y \in Q$, $\pi(\Phi(g)y) = g\pi(y)$. Such lifts are commonly called \emph{symmetric} in the literature \cite{kaibel2010symmetry,gouveia2011lifts}. In this paper however we prefer to use instead the term \emph{equivariant} which is more descriptive and mathematically standard.
It is known that the transformations of $\S^d$ which leave the psd cone $\S^d_+$ invariant are precisely congruence transformations, see e.g. \cite[Theorem 9.6.1]{tuncel2000potential}. This motivates the following definition of \emph{equivariant psd lift} which we adopt in this paper:
\begin{definition}
\label{def:equivariantpsdlift}
Let $P \subset \RR^n$ be a polytope invariant under the action of a group $G \subset GL(\RR^n)$. Assume $P = \pi(\S^d_+\cap L)$ is a psd lift of $P$ of size $d$. The lift is called \emph{$G$-equivariant} if there is a group homomorphism $\rho : G \rightarrow GL(\RR^d)$ such that the following two conditions hold:\\
(i) The subspace $L$ is invariant under congruence by $\rho(g)$, for all $g \in G$:
\begin{equation}
  \label{eq:Linvariance}
 \rho(g) Y \rho(g)^T \in L \quad \forall g \in G, \; \forall Y \in L.
\end{equation}
(ii) The following equivariance relation holds:
\begin{equation}
 \label{eq:linequivariance}
 \pi\left(\rho(g) Y \rho(g)^T\right) = g\pi(Y) \quad \forall g \in G, \; \forall Y \in \S^d_+ \cap L.
\end{equation}
\end{definition}

Observe that the notion of equivariant lift is defined with respect to a group $G$ which leaves $P$ invariant. The group $G$ does not have to be the full automorphism group $\Aut(P)$ in general. Indeed one may be interested in lifts that preserve only a certain subset of the symmetries of $P$, but not all of them. One example we discuss in detail later is the \emph{parity polytope} which is invariant under permutation of coordinates as well as under certain sign switches. In Section \ref{sec:parity} we mention two examples of well-known lifts of the parity polytope which are equivariant with respect to one set of transformations but not the other.

\paragraph{Examples} To illustrate the definition of equivariant psd lift, we now give a simple example of an equivariant psd lift and another example of a psd lift that does not satisfy the definition of equivariance.

\begin{example}
\label{ex:square}
{\bf  An equivariant psd lift of the square $[-1,1]^2$}\\
Consider the square $[-1,1]^2$ in the plane. The following is a well-known positive semidefinite lift of $[-1,1]^2$ of size 3:
\begin{equation}
\label{eq:squarelift}
 [-1,1]^2 = \left\{(x_1,x_2) \in \RR^2 \; : \; \exists u \in \RR \; \begin{bmatrix} 1 & x_1 & x_2\\ x_1 & 1 & u\\ x_2 & u & 1 \end{bmatrix} \succeq 0 \right\}. \end{equation}
We can write this lift in standard form as $[-1,1]^2 = \pi(\S^3_+ \cap L)$ where $L$ and $\pi$ are given by:
\[ L = \{ X \in \S^3 \; : \; X_{11} = X_{22} = X_{33} = 1 \} \quad \text{ and } \quad \pi(X) = (X_{12},X_{13}) \in \RR^2. \]
Note that the intersection $\S^3_+ \cap L$ is known as the \emph{elliptope} in $\S^3$ and is illustrated in Figure \ref{fig:square_lift_elliptope}.

\begin{figure}[ht]
  \centering
  \if\siam1
    \includegraphics[width=7cm]{square_lift_elliptope.eps}
  \else
    \includegraphics[width=7cm]{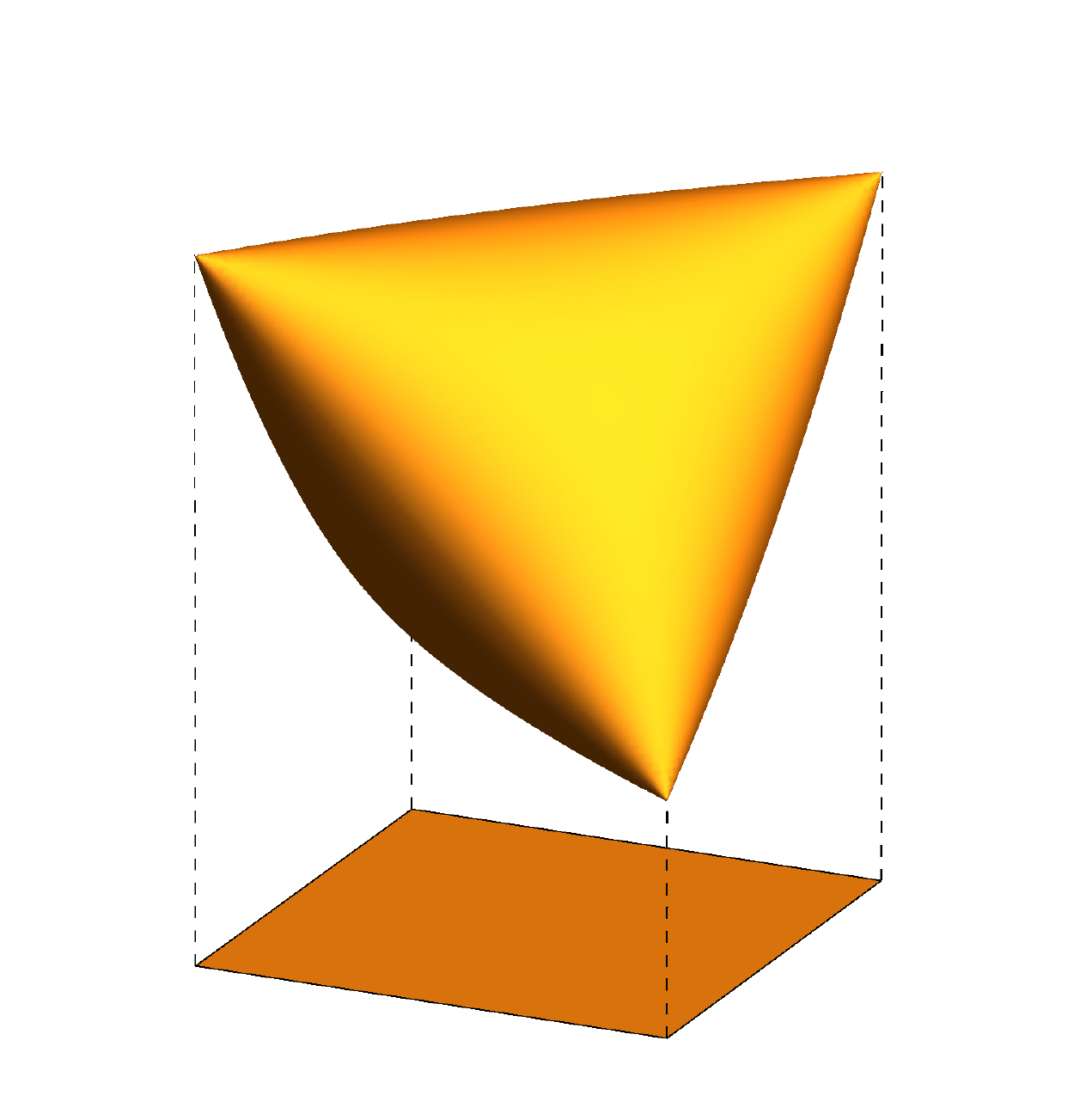}
  \fi
  \caption{A positive semidefinite lift of the square $[-1,1]^2$: the elliptope $\{ X \in \S^3_+ \; : \; \diag(X) = \1 \}$ linearly projects onto the square $[-1,1]^2$.}
  \label{fig:square_lift_elliptope}
\end{figure}

 The symmetry group of the square $[-1,1]^2$ is the dihedral group of order 8, denoted $D_8$. To show that the lift \eqref{eq:squarelift} is $D_8$-equivariant, consider the group homomorphism $\rho: D_8 \rightarrow GL(\RR^3)$ defined by:
\[ \rho(g) = \begin{bmatrix} 1 & 0\\ 0 & g \end{bmatrix} \quad \forall g \in D_8. \]
It is easy to see that the congruence operation by $\rho(g)$ stabilizes the subspace $L$, and that the following equivariance relation holds:
\[ \pi(\rho(g)X\rho(g)^T) = g \pi(X) \quad \forall g \in D_8, \forall X \in \S^3_+ \cap L. \]
Thus this shows that the psd lift \eqref{eq:squarelift} is $D_8$-equivariant. 
\if\siam1 \qquad \fi
$\lozenge$
\end{example}

We show later in the paper that any psd lift obtained from the Lasserre/theta-body hierarchy \cite{lasserre2009convex,gouveia2010theta} is in fact equivariant. For example, one concrete such psd lift is that of the stable set polytope for perfect graphs, which was originally introduced by Lov{\'a}sz \cite{lovasz1979shannon} and which can be understood as an instance of the Lasserre/theta-body lifts, see \cite[Section 3.1]{gouveia2010theta}.

\bigskip
We now show an example of a non-equivariant psd lift.
\bigskip

\begin{example}{\bf A nonequivariant psd lift of the hyperboloid}\\
\label{ex:hyperboloid}Let $H$ be the hyperboloid in $\RR^3$ defined by:
\[ H = \{ (x_1,x_2,x_3) \in \RR^3 \; : \; x_1,x_2,x_3 \geq 0\text{ and } x_1 x_2 x_3 \geq 1\}. \]
One can construct a psd lift of $H$ of size 6 as follows (see e.g., \cite[page 261]{frgbook}):
\begin{equation}
\label{eq:Hlift}
\begin{aligned}
H &= \left\{ (x_1,x_2,x_3) \in \RR^3 \; : \; \exists y,z \geq 0  \quad
x_1x_2 \geq y^2, \; x_3 \geq z^2, \; yz \geq 1 \right\}\\
&= \left\{ (x_1,x_2,x_3) \in \RR^3 \; : \; \exists y,z  \qquad
\begin{bmatrix} x_1 & y\\ y & x_2 \end{bmatrix} \succeq 0, \;
\begin{bmatrix} x_3 & z\\ z & 1 \end{bmatrix} \succeq 0, \;
\begin{bmatrix} y & 1\\ 1 & z \end{bmatrix} \succeq 0
\right\}.
\end{aligned}
\end{equation}
The hyperboloid $H$ is clearly invariant under permutation of coordinates, i.e., for any permutation $\sigma \in \fS_3$ we have $(x_1,x_2,x_3) \in H \Rightarrow \sigma\cdot (x_1,x_2,x_3) \in H$. However the lift we just constructed does not respect this symmetry: indeed to construct the lift we imposed a particular ordering of the variables where the last coordinate $x_3$ does not play the same role as the first two coordinates $x_1$ and $x_2$.  It is not difficult to formally show that the lift \eqref{eq:Hlift} does not satisfy Definition \ref{def:equivariantpsdlift} of equivariance when $G=\fS_3$. Note however that the lift is equivariant with respect to permuting the coordinates $x_1$ and $x_2$. $\lozenge$
\end{example}

\paragraph{Relation with symmetric LP lift} It is not hard to see that symmetric LP lifts can be interpreted as equivariant psd lifts, where each $\rho(g)$ consists of a permutation matrix.
In fact, recall that an LP lift of a polytope $P$ takes the form $P = \pi(\RR^d_+ \cap L)$ where $L$ is an affine subspace of $\RR^d$ and $\pi:\RR^d \rightarrow \RR^n$ is a linear map. An LP lift $P = \pi(\RR^d_+ \cap L)$ is called $G$-symmetric (or $G$-equivariant) if there exists $\theta:G\rightarrow \fS_d$ (where $\fS_d$ is the group of permutations on $d$ elements) such that for any $y \in \RR^d_+ \cap L$, $\pi(\theta(g) \cdot y) = g\cdot \pi(y)$. By working with diagonal matrices, any symmetric LP lift can be rewritten as an equivariant PSD lift: Indeed, if $P = \pi(\RR^d_+ \cap L)$ is a symmetric LP lift of $P$, then $P = \tilde{\pi}(\S^d_+ \cap \tilde{L})$ is an equivariant psd lift where $\tilde{L} = \{Y \in \S^d, Y \text{ is diagonal and } \diag(Y) \in L\}$ and $\tilde{\pi} = \pi\circ \diag$ where $\diag:\S^d \rightarrow \RR^d$ is the operator extracting the diagonal of a symmetric matrix. This psd lift clearly satisfies the definition of equivariance where $\rho(g)$ is the permutation matrix associated to $\theta(g)$. 

\subsection{Positive semidefinite lifts and sums-of-squares}
\label{sec:intro-sos}

Positive semidefinite lifts have a strong connection with sums-of-squares certificates for nonnegativity.
A generic way to construct positive semidefinite lifts of a polytope $P$ is to look for ``small'' sum-of-squares certificates for the facet-defining inequalities of $P$.
In order to make this statement precise, we first introduce some notations and definitions that will be crucial for the rest of this paper. Given a finite set $X \subset \RR^n$, we let $\cF(X,\RR)$ be the space of real-valued functions on $X$. Note that $\dim \cF(X,\RR) = |X|$. Furthermore, we endow $\cF(X,\RR)$ with pointwise multiplication operation:
\[ (f_1 f_2)(x) = f_1(x) f_2(x) \quad \forall x \in X \quad \forall f_1,f_2 \in \cF(X,\RR). \]
We introduce the following important definition which will be used throughout the paper:
\begin{definition}
Let $X \subset \RR^n$ be a finite set. Let $f\in \cF(X,\RR)$ and let $V$ be a subspace of $\cF(X,\RR)$. We say that $f$ is \emph{$V$-sos on $X$} if there exist functions $f_1,\dots,f_J \in V$ such that $f = f_1^2 + \dots + f_J^2$.
\end{definition}

Given a finite set $X \subset \RR^n$ and $\ell$ a linear form on $\RR^n$, define $\ell_{\max} := \max_{x \in X} \ell(x)$. Observe that $\ell_{\max} - \ell$ takes only nonnegative values on $X$, i.e., $\ell_{\max} - \ell(x) \geq 0$ for all $x \in X$. In other words $\ell_{\max} - \ell|_X \in \cF(X,\RR)$ is a nonnegative function on $X$. The following theorem establishes a relation between sum-of-squares certificates of $\ell_{\max} - \ell$ on $X$ and psd lifts of $\conv(X)$. The theorem is essentially due to Lasserre \cite{lasserre2009convex} and Gouveia et al.  \cite{gouveia2010theta}.
\begin{theoreml}
\label{thm:intro-soslifts}
Let $X \subset \RR^n$ be a finite set and assume that $X$ spans $\RR^n$.
Assume there is a subspace $V$ of $\cF(X,\RR)$ such that for any linear form $\ell$ on $\RR^n$, $\ell_{\max} - \ell$ is $V$-sos on $X$, where $\ell_{\max} := \max_{x \in X} \ell(x)$. Then $\conv(X)$ has an (explicit) psd lift of size $\dim V$.
\end{theoreml}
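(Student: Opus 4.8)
The plan is to construct the lift explicitly from the sum-of-squares certificates, following the moment/theta-body construction of Lasserre \cite{lasserre2009convex} and Gouveia et al.\ \cite{gouveia2010theta}. Write $d=\dim V$, fix a basis $v_1,\dots,v_d$ of $V$, and for $x\in X$ set $v(x)=(v_1(x),\dots,v_d(x))\in\RR^d$ and $\nu(x):=v(x)v(x)^T\in\S^d_+$, a rank-one psd matrix. I claim that $\conv(X)=\pi(\S^d_+\cap L)$, where $L:=\mathrm{aff}\{\nu(x):x\in X\}$ is the affine span inside $\S^d$ and $\pi:\S^d\to\RR^n$ is a suitable linear map. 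Note $L\subseteq\linspan\{\nu(x):x\in X\}$ and $\nu(x)\in\S^d_+\cap L$ for every $x\in X$.

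First I would translate the hypothesis into matrix language. Fixing a linear form $\ell$ and writing $\ell_{\max}-\ell=\sum_j f_j^2$ on $X$ with $f_j\in V$, and letting $c_j\in\RR^d$ be the coordinate vector of $f_j$ in the basis $v_1,\dots,v_d$, one gets for all $x\in X$
\[ \ell_{\max}-\ell(x)=\sum_j\langle c_j,v(x)\rangle^2=\langle C_\ell,\nu(x)\rangle,\qquad C_\ell:=\sum_j c_jc_j^T\succeq0, \]
where $\langle\cdot,\cdot\rangle$ is the trace inner product on $\S^d$. More generally, a function on $X$ lies in $V\cdot V$, the span of the products $v_iv_j|_X$, if and only if it has the form $x\mapsto\langle M,\nu(x)\rangle$ for some $M\in\S^d$.

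Next I would produce the projection. The key point is that every coordinate function $x\mapsto x_k$ lies in $V\cdot V$: since $X$ spans $\RR^n$ there is a linear form $\ell$ nonconstant on $X$, and combining the certificates for $\ell$ and $-\ell$ exhibits the positive constant $\ell_{\max}-\ell_{\min}$ as an element of $V\cdot V$, hence $\1\in V\cdot V$, hence $\ell=\ell_{\max}\1-(\ell_{\max}-\ell)\in V\cdot V$ for every linear form $\ell$. Thus for each $k$ there is $A_k\in\S^d$ with $\langle A_k,\nu(x)\rangle=x_k$ for all $x\in X$; set $\pi(Y):=(\langle A_1,Y\rangle,\dots,\langle A_n,Y\rangle)$, which is linear, satisfies $\pi(\nu(x))=x$, and (since $L$ lies in $\linspan\{\nu(x)\}$) has $\pi|_L$ independent of the choices of the $A_k$. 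The two inclusions are then short. For $\conv(X)\subseteq\pi(\S^d_+\cap L)$: each $\nu(x)$ is in $\S^d_+\cap L$ and maps to $x$, and $\pi(\S^d_+\cap L)$ is convex. For the reverse: any $Y\in\S^d_+\cap L$ is an affine combination $Y=\sum_x\mu_x\nu(x)$ with $\sum_x\mu_x=1$, so for every linear form $\ell$
\[ \langle C_\ell,Y\rangle=\sum_x\mu_x(\ell_{\max}-\ell(x))=\ell_{\max}-\ell(\pi(Y))\ge0, \]
using $\sum_x\mu_x x=\pi(Y)$ and $C_\ell,Y\succeq0$; hence $\ell(\pi(Y))\le\ell_{\max}$ for all $\ell$, i.e.\ $\pi(Y)\in\conv(X)$ (as $\conv(X)=\{z\in\RR^n:\ell(z)\le\ell_{\max}\text{ for all linear }\ell\}$). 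This yields $\conv(X)=\pi(\S^d_+\cap L)$, a psd lift of size $d=\dim V$, and it is explicit since $\nu$, $L=\mathrm{aff}(\nu(X))$, and the $A_k$ are all obtained by linear algebra from a basis of $V$.

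The step I expect to be the crux is the choice $L=\mathrm{aff}\{\nu(x)\}$ rather than some larger affine slice. The representation of a function in $V\cdot V$ by a matrix $M$ is highly non-unique, so the identity $\langle C_\ell,Y\rangle=\ell_{\max}-\ell(\pi(Y))$ holds a priori only on the points $\nu(X)$; taking $L$ to be exactly their affine hull is what propagates this identity to all of $L$, and combined with $C_\ell\succeq0$ it is what forces the psd slice $\S^d_+\cap L$ to project onto $\conv(X)$ and nothing more. The only other delicate point is that the linear forms themselves --- not merely the nonnegative functions $\ell_{\max}-\ell$ --- belong to $V\cdot V$, which is the short bootstrap through constants used above and is where the hypothesis that $X$ spans $\RR^n$ enters.
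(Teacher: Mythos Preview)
Your proof is correct and is essentially the same approach as the paper's: both implement the Lasserre/theta-body construction, the paper phrasing it via pseudo-expectation operators $E\in\cF(X,\RR)^*$ with $E(1)=1$ and the moment bilinear form $H_E(f_1,f_2)=E(f_1f_2)$, while you work directly with the matrices $\nu(x)=v(x)v(x)^T$ and the affine hull $L=\mathrm{aff}\{\nu(x)\}$. These coincide, since every $E$ with $E(1)=1$ is an affine combination of point evaluations, so the paper's $L=\{H_E:E(1)=1\}$ equals your $\mathrm{aff}\{\nu(x)\}$; your bootstrap showing $\1\in V\cdot V$ (and hence each $e_k\in V\cdot V$) is a detail the paper only asserts in passing.
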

\begin{remark}
Lasserre \cite{lasserre2009convex} and Gouveia et al. \cite{gouveia2010theta} considered the special case where $V$ is the space of polynomials of degree at most $k$ on $X$. However the same arguments in \cite{lasserre2009convex} and \cite{gouveia2010theta} actually apply more generally to any subspace $V$ of functions in $\cF(X,\RR)$.
\end{remark}

Theorem \ref{thm:intro-soslifts} can actually be (almost) turned into an exact characterization of psd lifts of $\conv(X)$. Indeed one can show that if $\conv(X)$ has a psd lift of size $d$, then there exists a subspace $V$ of $\cF(X,\RR)$ with $\dim V \leq d^2$ such that for any linear form $\ell$ on $\RR^n$, $\ell_{\max} - \ell$ is $V$-sos on $X$. This partial converse to Theorem \ref{thm:intro-soslifts} can be proven using the \emph{factorization theorem} \cite[Theorem 1]{gouveia2011lifts} (see also Theorem \ref{thm:generalpsdliftfactorization} in Appendix \ref{sec:proof_factorization_theorem_equivariant_lifts}).

Assume now that $X$ is invariant under the action of a group $G$ and that we are looking for a $G$-\emph{equivariant} psd lift. In this case it is not hard to formulate a version of Theorem \ref{thm:intro-soslifts} where the psd lift produced is guaranteed to be equivariant. To make this precise, we need the following definition: Observe that the action of $G$ on $X$ induces an action on $\cF(X,\RR)$ defined by: 
\[ (g\cdot f)(x) = f(g^{-1} \cdot x) \]
for any $g \in G, f \in \cF(X,\RR), x \in X$ (this action is known as the left regular representation of $G$). A subspace $V$ of $\cF(X,\RR)$ is called \emph{$G$-invariant} if $g\cdot f \in V$ for any $f \in V$ and $g \in G$. Then one can show that if the subspace $V$ in Theorem \ref{thm:intro-soslifts} is $G$-\emph{invariant}, then the resulting psd lift of $\conv(X)$ is $G$-equivariant:
\begin{theoreml}
\label{thm:intro-equivariantsoslifts}
Let $X \subset \RR^n$ be a finite set that spans $\RR^n$ and assume that $X$ is invariant under the action of a group $G$. Assume furthermore that there is a $G$-invariant subspace $V$ of $\cF(X,\RR)$ such that for any linear form $\ell$ on $\RR^n$, $\ell_{\max} - \ell$ is $V$-sos on $X$, where $\ell_{\max} := \max_{x \in X} \ell(x)$. Then $\conv(X)$ has an (explicit) $G$-equivariant psd lift of size $\dim V$.
\end{theoreml}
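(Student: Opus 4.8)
The plan is to revisit the construction that proves Theorem~\ref{thm:intro-soslifts} and check that, when $V$ is $G$-invariant, every object appearing in it can be chosen to carry a compatible $G$-action, so that Definition~\ref{def:equivariantpsdlift} is met. Morally this is the same proof carried out equivariantly; the one genuinely delicate point will be the dual data --- the projection $\pi$ and the affine slice $L$ --- whose $G$-action is not as visible as that of the semidefinite variable.

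First I would fix the correct copy of $\S^d$. Since $G$ acts on the finite set $X$ by permutations, the counting inner product $\langle f,h\rangle:=\sum_{x\in X}f(x)h(x)$ on $\cF(X,\RR)$ is $G$-invariant for the regular representation, hence restricts to a $G$-invariant inner product on $V$; fix an orthonormal basis $f_1,\dots,f_d$ of $V$, where $d=\dim V$. In this basis the $G$-action on $V$ becomes a homomorphism $\rho:G\to O(d)\subseteq GL(\RR^d)$, and this $\rho$ will be the one in Definition~\ref{def:equivariantpsdlift}. Put $v(x):=(f_1(x),\dots,f_d(x))\in\RR^d$ and $\phi(x):=v(x)v(x)^T\in\S^d_+$. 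A one-line computation from $(g\cdot f)(y)=f(g^{-1}y)$ gives $v(g\cdot x)=\rho(g)v(x)$, hence $\phi(g\cdot x)=\rho(g)\,\phi(x)\,\rho(g)^T$: the feature map $\phi$ of the lift is congruence-equivariant.

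Next I would assemble the rest of the lift equivariantly. Let $\mu:\S^d\to\cF(X,\RR)$, $\mu(A)(x):=v(x)^TAv(x)$; its image is the span $V\cdot V$ of pointwise products of elements of $V$, and $\mu$ intertwines congruence on $\S^d$ with the regular action on $\cF(X,\RR)$. A short argument (using $\linspan X=\RR^n$, and leaving aside the degenerate case $|X|=1$) shows that the $V$-sos hypothesis forces the constant function $\1$ and the coordinate functions $x_i|_X$ into $\Im\mu$. The key point is that, $\mu$ being $G$-equivariant with $G$-invariant inner products on $\S^d$ (Frobenius) and on $\cF(X,\RR)$, it admits a \emph{canonical} $G$-equivariant section $s:=\mu^*(\mu\mu^*)^{-1}:\Im\mu\to\S^d$; in particular no averaging over $G$ is needed, so the argument applies to any group $G$. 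I then set $c_0:=s(\1)$ and $c_i:=s(x_i|_X)$, and define
\[ L:=\{Y\in\S^d:\ Y\perp\Ker\mu,\ \langle Y,c_0\rangle=1\},\qquad \pi(Y):=(\langle Y,c_1\rangle,\dots,\langle Y,c_n\rangle), \]
which gives $\phi(x)\in\S^d_+\cap L$ and $\pi(\phi(x))=x$.

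Finally I would (a) prove $\pi(\S^d_+\cap L)=\conv(X)$ and (b) verify the two clauses of Definition~\ref{def:equivariantpsdlift}. Part (a) is exactly the argument behind Theorem~\ref{thm:intro-soslifts}: ``$\supseteq$'' holds because $\phi(x)\in\S^d_+\cap L$, and for ``$\subseteq$'', given $Y\in\S^d_+\cap L$ and a linear form $\ell$, I write $\ell_{\max}-\ell|_X=\sum_j g_j^2$ with $g_j\in V$, note $\ell(\pi(Y))=\langle Y,s(\ell|_X)\rangle$, and use $Y\perp\Ker\mu$ to replace $s$ of the product $\sum_jg_j^2$ by the psd matrix $\sum_j\hat g_j\hat g_j^T$ (with $\hat g_j$ the coordinate vector of $g_j$), obtaining $\ell(\pi(Y))=\ell_{\max}-\sum_j\hat g_j^TY\hat g_j\le\ell_{\max}$. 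For (b): $\Ker\mu$ and $c_0$ are $G$-stable ($c_0$ is $G$-fixed since $\1$ is and $s$ is equivariant), so $L$ is stable under congruence by $\rho(g)$, giving clause~(i); and because the assignment $e_i\mapsto c_i$ is equivariant for the $G$-action on linear forms on $\RR^n$ (it is $s$ after the equivariant restriction map $\ell\mapsto\ell|_X$), it unwinds to $\rho(g)^Tc_i\rho(g)=\sum_jg_{ij}c_j$, which is clause~(ii), i.e.\ \eqref{eq:linequivariance}. I expect the main obstacle to be this last step: lining up all the inverses and transposes in $\rho$ and in the $G$-action on the coordinates of $\RR^n$ so that the equivariance relation comes out with the correct conventions.
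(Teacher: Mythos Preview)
Your proposal is correct and, in fact, constructs exactly the same lift as the paper. If you unwind the paper's ``pseudo-expectation'' description $E\mapsto H_E$, $(H_E)_{ij}=E(f_if_j)$, you find $\langle H_E,A\rangle = E(\mu(A))$ in your notation; hence the paper's affine slice $\{H_E:E(1)=1\}$ is precisely your $L=\{Y\perp\Ker\mu,\ \langle Y,c_0\rangle=1\}$, and its projection $H_E\mapsto(E(e_1),\dots,E(e_n))$ is your $\pi$. The difference is in how equivariance is verified. The paper works dually and reduces everything to the one-line identity
\[
H_{g\cdot E}(f_1,f_2)=E\bigl(g^{-1}\!\cdot(f_1f_2)\bigr)=E\bigl((g^{-1}\!\cdot f_1)(g^{-1}\!\cdot f_2)\bigr)=H_E(g^{-1}\!\cdot f_1,\,g^{-1}\!\cdot f_2),
\]
i.e.\ $H_{g\cdot E}=\rho(g)H_E\rho(g)^T$, which hinges on the multiplicativity of the regular representation. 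You instead work primally: the feature map $\phi$ is visibly congruence-equivariant, and you make $\pi$ and $L$ equivariant by choosing the dual data through the canonical equivariant section $s=\mu^*(\mu\mu^*)^{-1}$. Your route is a bit longer but has two small advantages: it makes explicit why $\1$ and the $e_i$ lie in $V\!\cdot\! V$ (the paper invokes full-dimensionality of $\conv(X)$ here, which is not quite the stated hypothesis), and it sidesteps the bookkeeping of transposes/inverses you worried about by packaging it into the naturality of the Moore--Penrose inverse. Either argument is fine; the paper's is more concise once one has the multiplicativity observation.
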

\begin{proof}
We provide the proof in Appendix \ref{sec:equivariancesoslift}.
\if\siam1 \qquad \fi
\end{proof}

We saw earlier that Theorem \ref{thm:intro-soslifts} for general psd lifts is actually almost tight, since it admits a partial converse. One of the main contributions of this paper is to show that a similar partial converse to Theorem \ref{thm:intro-equivariantsoslifts} also holds (cf. Theorem \ref{thm:main-structure-general} to follow).

As mentioned earlier, the well-known Lasserre/theta-body lift of $\conv(X)$ can be understood in the context above by taking $V$ to be the space of polynomials of degree at most $k$ on $X$. We say that a function $f \in \cF(X,\RR)$ is a \emph{polynomial of degree at most $k$ on $X$} if there exists a polynomial $p \in \RR[x_1,\dots,x_n]$ with $\deg p \leq k$ such that $f(x) = p(x)$ for all $x \in X$. We denote by $\Pol_{\leq k}(X)$ the subspace of $\cF(X,\RR)$ consisting of polynomials of degree at most $k$ on $X$. The following definition of \emph{theta-rank} from \cite{gouveia2010theta} will be useful later in the paper:
\begin{definition}[Theta-rank \cite{gouveia2010theta}]
\label{def:thetarank}
Let $X$ be a finite subset of $\RR^n$. The \emph{theta-rank} of $X$ is the smallest integer $k$ such that the following holds: for any linear form $\ell$ on $\RR^n$, $\ell_{\max} - \ell$ is $\Pol_{\leq k}(X)$-sos on $X$ where $\ell_{\max} := \max_{x \in X} \ell(x)$.
\end{definition}
Note that if $X$ has theta-rank $k$ then by Theorem \ref{thm:intro-equivariantsoslifts}, $\conv(X)$ has an equivariant psd lift of size $\dim \Pol_{\leq k}(X)$, since $\Pol_{\leq k}(X)$ is a $G$-invariant subspace of $\cF(X,\RR)$.

\paragraph{The explicit lifts} In this paragraph we describe the explicit psd lift that one obtains from Theorems \ref{thm:intro-soslifts} and \ref{thm:intro-equivariantsoslifts}. The details of this construction are not, strictly speaking, needed for the rest of this paper. However we believe that the description given here is beneficial for some of the discussions in Sections \ref{sec:parity} and \ref{sec:cut} on the parity polytope and the cut polytope.
The construction we describe here is essentially due to \cite{lasserre2009convex,gouveia2010theta}. Let $e_i \in \cF(X,\RR)$, for $i=1,\dots,n$ be defined by $e_i(x) = x_i$ and let $\cF(X,\RR)^*$ be the space of linear forms on $\cF(X,\RR)$. If $V$ is a subspace of $\cF(X,\RR)$ that satisfies the assumption of Theorem \ref{thm:intro-soslifts}, then one can show that we have the following explicit psd lift of $\conv(X)$ (Theorem \ref{thm:intro-soslifts}):
\begin{equation}
\label{eq:sosliftV}
\conv(X) = \TH_V(X)
\end{equation}
where
\begin{equation}
\label{eq:SOSLV}
\begin{aligned}
\TH_V(X) := \Biggl\{ (E(e_1),\dots,E(e_n)) \; : \; & E \in \cF(X,\RR)^* \text{ where }\\
 & E(1) = 1, \; E(f^2) \geq 0 \; \forall f \in V \Bigr\}.
\end{aligned}
\end{equation}
Note that by picking a basis for $V$, the constraint $E(f^2) \geq 0 \; \forall f \in V$ in \eqref{eq:SOSLV} can be written as a psd constraint of size $d$ (we refer to Appendix \ref{sec:equivariancesoslift} for more details on how to write \eqref{eq:SOSLV} in the form $\conv(X) = \pi(\S^d_+ \cap L)$). In the definition of $\TH_V(X)$, one can think of $E \in \cF(X,\RR)^*$ as playing the role of an \emph{expectation} operator for a certain measure supported on $X$ ($E$ is sometimes called a \emph{pseudo-expectation}, see e.g., \cite{lee2014lower}). In fact one can compare \eqref{eq:SOSLV} to the following trivial representation of $\conv(X)$:
\begin{equation}
\label{eq:convXexpectation}
\begin{aligned}
\conv(X) = \Biggl\{ (E(e_1),\dots,E(e_n)) \; : \;  & E \in \cF(X,\RR)^* \text{ where } E(f) = \int_{X} f(x) d\mu(x) \\
&   \text{ for some prob. measure $\mu$ on $X$} \Biggr\}.
\end{aligned}
\end{equation}
By comparing Equations \eqref{eq:SOSLV} and \eqref{eq:convXexpectation}, one can easily see that $\conv(X) \subseteq \TH_V(X)$ for any subspace $V$ of $\cF(X,\RR)$.

  Theorem \ref{thm:intro-equivariantsoslifts} asserts that if $V$ is a $G$-invariant subspace, then the psd lift \eqref{eq:sosliftV}-\eqref{eq:SOSLV} is $G$-equivariant.
The Lasserre/theta-body hierarchy of $\conv(X)$ can now be defined as follows:
\begin{definition}[Lasserre/theta-body hierarchy]
Let $X$ be a finite subset of $\RR^n$ and let $\cF(X,\RR)$ be the space of functions on $X$. The \emph{$k$'th level of the Lasserre/theta-body hierarchy} of $X$ is the set $\TH_V(X)$ where $V = \Pol_{\leq k}(X)$ is the space of polynomials of degree at most $k$ on $X$. This set will be denoted $\TH_k(X)$ for simplicity.
\end{definition}
In light of this definition note that the theta-rank of $X$ is the smallest $k$ such that the equality $\conv(X) = \TH_k(X)$ holds.
\begin{remark}
In \cite{gouveia2010theta}, the theta-body hierarchy and the theta-rank are defined for an ideal $I$ of $\RR[x]$. Our definition for the theta-rank of $X \subset \RR^n$ corresponds to the theta-rank of the vanishing ideal of $X$.
\end{remark}

\subsection{Contributions}
\label{sec:contribution}

\subsubsection{Summary}
In this paper we present a representation-theoretic framework to study equivariant psd lifts of a general class of symmetric polytopes known as \emph{orbitopes} \cite{sanyal2011orbitopes}.
\begin{itemize}
\item Our first contribution is a \emph{structure theorem} which can be regarded as a converse to Theorem \ref{thm:intro-equivariantsoslifts}: namely we show that any equivariant psd lift of size $d$ of an orbitope must have the form given in Theorem \ref{thm:intro-equivariantsoslifts} where the dimension of $V$ is bounded by $d^3$. Furthermore, the bound $d^3$ can be improved when the symmetry group has some specific structure. One consequence of the structure theorem is that in order to study equivariant psd lifts, one has to understand the structure of low-dimensional invariant subspaces in $\cF(X,\RR)$.
In particular if one can show that low-dimensional invariant subspaces are composed only of low-degree polynomials, then any lower bound on the Lasserre/theta-body hierarchy would automatically translate to a lower bound on equivariant psd lifts.
\item We apply the theorem to two well-known examples where such a phenomenon occurs, namely the parity polytope and the cut polytope. In both cases we show that low-dimensional invariant subspaces of $\cF(X,\RR)$ correspond (essentially) to low-degree polynomials. Thus using the structure theorem, this shows that if we have a small equivariant psd lift, then the sum-of-squares hierarchy is exact after a small number of steps. For the parity polytope we prove a lower bound of $\lceil n/4 \rceil$ on the sum-of-squares hierarchy which implies an exponential lower bound on the size of any equivariant psd lift of the parity polytope. Similarly for the cut polytope using the well-known lower bound of Laurent of $\lceil n/2 \rceil$ \cite{laurent2003lower} our results yield an exponential lower bound on the size of any equivariant psd lift of the cut polytope.
\end{itemize}

\subsubsection{Statement of results}
\label{sec:statement}
In this brief section we give a more precise statement of our results. The family of polytopes that we focus on in this paper are known as \emph{orbitopes} \cite{barvinok1988convex,barvinok2005convex,sanyal2011orbitopes}: 
Let $G$ be a finite subgroup of $GL(\RR^n)$, the group of $n\times n$ real invertible matrices.
Given $x_0 \in \RR^n$, let $X = G\cdot x_0 := \{ g\cdot x_0 \; : \; g \in G \}$ be the \emph{orbit} of $x_0$ under $G$, and consider the polytope $P$ defined as the convex hull of $X$:
\begin{equation}
 \label{eq:orbitope}
 P = \conv(X) = \conv(G\cdot x_0).
\end{equation}
Such a polytope is called an orbitope \cite{sanyal2011orbitopes}.

Orbitopes are symmetric by construction; for example they are clearly invariant under the action of $G$. In this paper we are interested in psd lifts of orbitopes that respect their symmetry. We prove that any equivariant psd lift of size $d$ of an orbitope $P$ is essentially a sum-of-squares lift where the functions in the sum-of-squares decomposition come from a certain $G$-invariant subspace of dimension at most $d^3$. More precisely we prove the following theorem:
\begin{theorem}
\label{thm:main-structure-general}
Let $P$ be a $G$-orbitope as in \eqref{eq:orbitope}.
Assume $P$ has a $G$-equivariant psd lift of size $d$. Then there exists a $G$-invariant subspace $V$ of $\cF(X,\RR)$ such that:
\begin{itemize}
\item[(i)] For any linear form $\ell$ on $\RR^n$, $\ell_{\max} - \ell$ is $V$-sos on $X$, where \mbox{$\ell_{\max} := \displaystyle{\max_{x \in X} \ell(x)}$}.
\item[(ii)] $\dim V \leq d^3$.
\end{itemize}
\end{theorem}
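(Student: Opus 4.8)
The plan is to prove the statement as the converse to Theorem~\ref{thm:intro-equivariantsoslifts}, i.e.\ as an equivariant version of the partial converse to Theorem~\ref{thm:intro-soslifts} discussed in Section~\ref{sec:intro-sos}: run the usual ``psd lift $\Rightarrow$ sos certificate'' argument, but carry the $G$-action through so that the resulting subspace is $G$-invariant and small. The first ingredient is the factorization theorem for equivariant lifts (Theorem~\ref{thm:generalpsdliftfactorization}, proved in Appendix~\ref{sec:proof_factorization_theorem_equivariant_lifts}; cf.\ \cite{gouveia2011lifts}), which converts the hypothesis into data on $X$. Write the given lift as $P=\pi(\S^d_+\cap L)$ with homomorphism $\rho:G\to GL(\RR^d)$ as in Definition~\ref{def:equivariantpsdlift}. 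I would first arrange that $\rho$ is orthogonal: since $G$ is finite, $Q:=\tfrac1{|G|}\sum_{g\in G}\rho(g)\rho(g)^T\succ0$ satisfies $\rho(g)Q\rho(g)^T=Q$, so conjugating the whole lift by $Q^{1/2}$ — a congruence on $\S^d$, hence harmless for $\S^d_+$ and for the size $d$ — replaces each $\rho(g)$ by an orthogonal matrix. Next, picking any $Y_0\in\S^d_+\cap L$ with $\pi(Y_0)=x_0$ and averaging it over the stabilizer $H:=\{h\in G:h\cdot x_0=x_0\}$ produces $A_{x_0}:=\tfrac1{|H|}\sum_{h\in H}\rho(h)Y_0\rho(h)^T$, which still lies in $\S^d_+\cap L$ by condition~(i) of Definition~\ref{def:equivariantpsdlift}, still satisfies $\pi(A_{x_0})=x_0$ by condition~(ii), and is fixed by congruence under $\rho(H)$; hence $A_{g\cdot x_0}:=\rho(g)A_{x_0}\rho(g)^T$ defines a $G$-equivariant assignment $x\mapsto A_x\in\S^d_+$ with $\pi(A_x)=x$. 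Finally, for any linear form $\ell$ the affine function $Y\mapsto\ell_{\max}-\ell(\pi(Y))$ is nonnegative on $\S^d_+\cap L$, and the factorization theorem supplies $B_\ell\in\S^d_+$ with $\langle B_\ell,Y\rangle=\ell_{\max}-\ell(\pi(Y))$ for all $Y\in L$; in particular $\langle A_x,B_\ell\rangle=\ell_{\max}-\ell(x)$ for every $x\in X$.

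The second ingredient builds $V$. Let $B_x:=A_x^{1/2}\in\S^d_+$ be the positive semidefinite square root, so that $A_x=B_xB_x^T$. Because each $\rho(g)$ is orthogonal, $B_{g\cdot x}=(\rho(g)A_x\rho(g)^T)^{1/2}=\rho(g)A_x^{1/2}\rho(g)^T=\rho(g)B_x\rho(g)^T$, i.e.\ $x\mapsto B_x$ is equivariant just like $x\mapsto A_x$. I would then set
\[
V:=\linspan\bigl\{\,x\mapsto (B_x)_{ij}\ :\ 1\le i,j\le d\,\bigr\}\subseteq\cF(X,\RR).
\]
Since $B_x$ is symmetric, $\dim V\le\binom{d+1}{2}\le d^3$, which is part~(ii). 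For $G$-invariance, observe that for $f\in V$ and $g\in G$ one has $(g\cdot f)(x)=f(g^{-1}\cdot x)$ and $(B_{g^{-1}\cdot x})_{ij}=\sum_{k,l}\rho(g^{-1})_{ik}\rho(g^{-1})_{jl}(B_x)_{kl}$, a fixed linear combination of the spanning functions, so $g\cdot f\in V$. For part~(i), fix $\ell$ and diagonalize $B_\ell=\sum_k\lambda_k w_kw_k^T$ with $\lambda_k\ge0$ and $w_k\in\RR^d$. Using $A_x=B_x^2$,
\[
\ell_{\max}-\ell(x)=\langle A_x,B_\ell\rangle=\sum_k\lambda_k\,w_k^TB_x^2w_k=\sum_k\lambda_k\|B_xw_k\|^2=\sum_k\sum_{i=1}^d\bigl(\sqrt{\lambda_k}\,(B_xw_k)_i\bigr)^2,
\]
and each $x\mapsto\sqrt{\lambda_k}\,(B_xw_k)_i=\sum_j\sqrt{\lambda_k}\,(w_k)_j(B_x)_{ij}$ is a fixed linear combination of the spanning functions, hence lies in $V$. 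Thus $\ell_{\max}-\ell$ is $V$-sos on $X$, which is part~(i).

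I expect the main obstacle to be entirely in the first ingredient, specifically the existence of the dual certificate $B_\ell\succeq0$: namely, that an affine function nonnegative on the spectrahedron $\S^d_+\cap L$ must agree on the affine hull of $L$ with $\langle B_\ell,\cdot\rangle$ for some $B_\ell\succeq0$. This is exactly where closedness/facial subtleties of semidefinite duality enter, and it is dealt with in the appendix. Everything afterward is bookkeeping with the group action; the one genuine idea there is to orthogonalize $\rho$ at the outset, since that is precisely what makes the map $A_x\mapsto A_x^{1/2}$ intertwine the two congruence actions and hence guarantees that $V$ itself — not merely some larger auxiliary subspace — is $G$-invariant. (As sketched, the argument in fact gives the sharper bound $\dim V\le\binom{d+1}{2}$.)
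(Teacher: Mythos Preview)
Your argument is correct, and in fact sharper than the paper's. Both proofs start identically: invoke the equivariant factorization theorem (after orthogonalizing $\rho$) to obtain the equivariant map $x\mapsto A_x\in\S^d_+$ and the dual certificates $B_\ell\succeq0$. The divergence is in how one extracts a sum-of-squares from $\langle A_x,B_\ell\rangle$. The paper writes the eigendecomposition $A_{x_0}=\sum_{i=1}^r\lambda_iP_{W_i}$, observes that each projector $P_{W_i}$ is $\rho(H)$-invariant, and defines $V$ as the span of the entry functions $x\mapsto(\rho(g_x)P_{W_i}\rho(g_x)^T)_{kl}$; the identity $P^2=P$ then yields the sos decomposition. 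This gives $r\cdot d^2\le d^3$ spanning functions. You instead take a single matrix $B_x=A_x^{1/2}$ and use that the positive square root intertwines orthogonal congruence, so that $V=\linspan\{x\mapsto(B_x)_{ij}\}$ is already $G$-invariant; the identity $A_x=B_x^2$ gives the sos decomposition directly. Because $B_x$ is symmetric you get $\dim V\le\binom{d+1}{2}$, strictly better than the paper's $d^3$ (and in fact better than the generic $d^2$ bound in the semidirect-product Theorem~\ref{thm:main-structure-product}, though that theorem can still win when $\alpha_N(d)$ is small, e.g.\ $\alpha_N(d)=1$ for abelian $N$). Your approach is simply cleaner here; the paper's detour through eigenprojectors is not buying anything for this theorem.
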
 
The theorem above establishes a connection between small $G$-equivariant psd lifts and low-dimensional $G$-invariant subspaces of $\cF(X,\RR)$. Thus to obtain lower bounds on the sizes of equivariant psd lifts one has to study the structure of such subspaces of $\cF(X,\RR)$. In particular, if one can show that such subspaces correspond to low-degree polynomials, then any lower bound on the sum-of-squares hierarchy will yield a lower bound on $G$-equivariant psd lifts.

In the case where the group $G$ has a certain product structure, one can strengthen the conclusion of Theorem \ref{thm:main-structure-general} with a better bound on the dimension of $V$ (we refer the reader to Section \ref{sec:regular-orbitopes} for the precise statement). Using these structure theorems we study certain well-known orbitopes and we prove lower-bounds on the size of equivariant psd lifts.

\paragraph{Parity polytope} The first example we study is the so-called \emph{parity polytope}, denoted $\PAR_n$, and which is defined as the convex hull of all points $x \in \{-1,1\}^n$ that have an even number of $-1$'s:
\[ \PAR_n = \conv \left\{ x \in \{-1,1\}^n \; : \; \prod_{i=1}^n x_i = 1 \right\}. \] 
It is easy to see that $\PAR_n$ is an orbitope. The symmetry group of $\PAR_n$ consists of coordinate permutations as well as transformations that switch the sign of an even number of coordinates. Let $G_{\parity}$ be the subgroup of $GL(\RR^n)$ generated by these transformations. We show that low-dimensional $G_{\parity}$-invariant subspaces of $\cF(X,\RR)$ consist of low-degree polynomials (where $X$ denotes the vertices of the parity polytope). We also show that the sum-of-squares hierarchy for $\PAR_n$ requires at least $\lceil n/4 \rceil$ levels to be exact. Thus, using the structure theorem, this allows us to obtain the following exponential lower bound on the size of any $G_{\parity}$-equivariant psd lift of the parity polytope:

\begin{theorem}
\label{thm:main-parity-lb}
Any $G_{\parity}$-equivariant psd lift of $\PAR_n$ for $n\geq 8$ must have size $\geq \binom{n}{\lceil n/4 \rceil}$.
\end{theorem}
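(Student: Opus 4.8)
The plan is to combine three ingredients: (a) the structure theorem, in the refined form of Section~\ref{sec:regular-orbitopes} that exploits the product structure of $G_{\parity}$; (b) a complete description of the $G_{\parity}$-invariant subspaces of $\cF(X,\RR)$, where $X$ denotes the vertex set of $\PAR_n$; and (c) the lower bound $\lceil n/4\rceil$ on the number of levels of the sum-of-squares (Lasserre/theta-body) hierarchy needed to cut out $\PAR_n$, which I treat as established separately in Section~\ref{sec:parity}. The logic is: a small equivariant lift forces, via (a), a \emph{low-dimensional} $G_{\parity}$-invariant subspace $V$ certifying all facet inequalities as $V$-sos; via (b), such a $V$ is a sum of few low-degree blocks; and if $\dim V$ were too small then $V$ would consist only of polynomials of degree $<\lceil n/4\rceil$, contradicting~(c).

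First I would work out the representation theory of $\cF(X,\RR)$. Writing $\chi_S(x)=\prod_{i\in S}x_i$ for $S\subseteq[n]$, the $\chi_S$ span all functions on $\{-1,1\}^n$; on $X$ one has $\chi_S=\chi_{S^c}$, so the classes $\{\chi_S\}$ with $|S|\le \lfloor n/2\rfloor$ (identifying $S$ with $S^c$ when $|S|=n/2$) form a basis, and $\Pol_{\le k}(X)=\bigoplus_{j=0}^k W_j$ for $k<n/2$, where $W_j:=\linspan\{\chi_S:|S|=j\}$ and $\dim W_j=\binom nj$. The key lemma is that each $W_j$ is an \emph{irreducible} $G_{\parity}$-module and that the $W_j$ are pairwise non-isomorphic; hence $\cF(X,\RR)=\bigoplus_{j=0}^{\lfloor n/2\rfloor}W_j$ is multiplicity-free, and every $G_{\parity}$-invariant subspace has the form $\bigoplus_{j\in A}W_j$ for some $A\subseteq\{0,\dots,\lfloor n/2\rfloor\}$. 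For irreducibility: an even sign switch (flipping the coordinates in a set $T$ with $|T|$ even) acts on the basis of $W_j$ by $\chi_S\mapsto(-1)^{|S\cap T|}\chi_S$; letting $T$ range over $2$-element subsets, an $\mathbb{F}_2$ linear-algebra argument shows that distinct classes $\chi_S$ ($|S|=j$) are separated by some such $T$, so each $\RR\chi_S$ is its own joint eigenline for the abelian even-sign-switch subgroup. Consequently any $G_{\parity}$-submodule of $W_j$ is spanned by a subset of the $\chi_S$, and since $\fS_n\subseteq G_{\parity}$ permutes them transitively, the only submodules are $0$ and $W_j$; the multiset of even-sign-switch characters occurring in $W_j$ determines $j$, giving non-isomorphism.

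Next I would invoke the structure theorem. Since $\cF(X,\RR)$ is multiplicity-free as a $G_{\parity}$-module, the refined structure theorem of Section~\ref{sec:regular-orbitopes}, applied to a $G_{\parity}$-equivariant psd lift of $\PAR_n$ of size $d$, produces a $G_{\parity}$-invariant subspace $V\subseteq\cF(X,\RR)$ with $\dim V\le d$ such that $\ell_{\max}-\ell$ is $V$-sos on $X$ for every linear form $\ell$. If $V\subseteq \Pol_{\le \lceil n/4\rceil-1}(X)$, then $\ell_{\max}-\ell$ would be $\Pol_{\le\lceil n/4\rceil-1}(X)$-sos for all $\ell$, i.e.\ $\conv(X)=\TH_{\lceil n/4\rceil-1}(X)$, contradicting~(c) (the theta-rank of $\PAR_n$ is at least $\lceil n/4\rceil$). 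Hence $V\not\subseteq\Pol_{\le\lceil n/4\rceil-1}(X)=\bigoplus_{j<\lceil n/4\rceil}W_j$, so the index set $A$ of $V$ contains some $j\ge\lceil n/4\rceil$, whence $d\ge\dim V\ge\dim W_j=\binom nj$. Since $\binom nj$ is non-decreasing for $0\le j\le n/2$, and $\dim W_{n/2}=\tfrac12\binom n{n/2}\ge\binom n{\lceil n/4\rceil}$ when $n\ge8$, this yields $d\ge\binom n{\lceil n/4\rceil}$.

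The two genuinely substantive points are the theta-rank lower bound (c) and the irreducibility in (b). For (c) one must exhibit a facet-defining linear form of $\PAR_n$ and certify that $\ell_{\max}-\ell$ admits no $\Pol_{\le k}(X)$-sos representation for $k<\lceil n/4\rceil$; this is the only step requiring real estimation, and I expect it to be the main obstacle. The subtlety in (b) is that only the \emph{even} sign switches lie in $G_{\parity}$ — for the full hyperoctahedral group the blocks $W_j$ are visibly irreducible — and the eigenline argument above is what shows the restricted group still acts with enough refinement. A minor point is the behaviour at $j=n/2$ for even $n$, where $\dim W_{n/2}=\tfrac12\binom n{n/2}$: this is exactly why the hypothesis $n\ge8$ appears, since the inequality $\tfrac12\binom n{n/2}\ge\binom n{\lceil n/4\rceil}$ fails for $n=6$.
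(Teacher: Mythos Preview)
Your proposal is correct and follows essentially the same route as the paper: apply the regular-orbitope structure theorem to get a $G_{\parity}$-invariant $V$ with $\dim V\le d$, decompose $\cF(\EVEN_n,\RR)$ into the irreducible blocks $W_j=\Pol_j(\EVEN_n)$, and combine with the theta-rank lower bound $\lceil n/4\rceil$ to force $V$ to contain some $W_j$ with $j\ge\lceil n/4\rceil$. One small correction: the bound $\dim V\le d$ comes from $N_{\parity}\cong\ZZ_2^{n-1}$ being abelian (so $\alpha_{N_{\parity}}(d)=1$ in Theorem~\ref{thm:main-structure-product}), \emph{not} from the multiplicity-freeness of $\cF(X,\RR)$; the latter is what you use afterwards to conclude that $V$ is a direct sum of full blocks $W_j$.
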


Note that in the theorem above, equivariance is with respect to the full symmetry group $G_{\parity}$ consisting of coordinate permutations \emph{and} even sign switches. As we see in Section \ref{sec:parity} there are well-known \emph{polynomial-size} LP lifts of $\PAR_n$, however they are \emph{not} equivariant with respect to the full symmetry group.

\paragraph{Cut polytope} Using the framework described above we also study the \emph{cut polytope} defined as:
\[ \CUT_n = \conv\left\{xx^T \; : \; x \in \{-1,1\}^n\right\} \subset \S^n. \]
Let $G_{\cube} \subset GL(\RR^n)$ be the symmetry group of the hypercube $[-1,1]^n$ which consists of signed permutation matrices, i.e., permutation matrices where nonzero entries are either $+1$ or $-1$ (the group $G_{\cube}$ is also known as the \emph{hyperoctahedral group}). The cut polytope is invariant under the action of  $G_{\cube}$ which acts on the space of $n\times n$ symmetric matrices by congruence transformations. By studying low-dimensional invariant subspaces of functions on the vertices of the hypercube $\{-1,1\}^n$ and using the lower bound of Laurent \cite{laurent2003lower} on the sum-of-squares hierarchy for the cut polytope, we prove the following exponential lower bound on the size of any equivariant psd lift of $\CUT_n$:
\begin{theorem}
\label{thm:main-cut-lb}
Any $G_{\cube}$-equivariant psd lift of $\CUT_{n}$ must have size $\geq \binom{n}{\lceil n/4 \rceil}$.
\end{theorem}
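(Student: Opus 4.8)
The plan is to follow the same route used for the parity polytope (Theorem~\ref{thm:main-parity-lb}): apply the structure theorem, analyse the low-dimensional $G_{\cube}$-invariant subspaces of $\cF(X,\RR)$ where $X = \{xx^T : x \in \{-1,1\}^n\}$ is the vertex set of $\CUT_n$, and then invoke Laurent's lower bound on the Lasserre hierarchy. Concretely, suppose $\CUT_n$ has a $G_{\cube}$-equivariant psd lift of size $d$. Since the hyperoctahedral group $G_{\cube}\cong \ZZ_2^n\rtimes\fS_n$ has the product-type structure for which the structure theorem can be sharpened (Section~\ref{sec:regular-orbitopes}), I would first apply that sharpened version to obtain a $G_{\cube}$-invariant subspace $V\subseteq\cF(X,\RR)$ with $\dim V\le d$ such that $\ell_{\max}-\ell$ is $V$-sos on $X$ for every linear form $\ell$ on $\S^n$; by \eqref{eq:sosliftV} this is exactly the statement $\conv(X)=\TH_V(X)$.

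Next I would compute the $G_{\cube}$-isotypic decomposition of $\cF(X,\RR)$. Identifying a function on $X$ with the corresponding even Walsh function $\chi_S(x)=\prod_{i\in S}x_i$ ($|S|$ even) on $\{-1,1\}^n$, congruence by a signed permutation of $\S^n$ becomes the induced signed-permutation action on the $\chi_S$. By Clifford theory applied to the normal subgroup $\ZZ_2^n$ --- the functions $\chi_S$ of a fixed even cardinality $|S|=2j$ are eigenvectors for pairwise distinct characters of $\ZZ_2^n$, and $\fS_n$ permutes these characters transitively --- each $W_{2j}:=\linspan\{\chi_S:|S|=2j\}$ is an irreducible $G_{\cube}$-module, and the $W_{2j}$ for $0\le j\le\lfloor n/2\rfloor$ are pairwise non-isomorphic. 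Hence $\cF(X,\RR)=\bigoplus_{j=0}^{\lfloor n/2\rfloor}W_{2j}$ is multiplicity free with $\dim W_{2j}=\binom{n}{2j}$, and moreover $\Pol_{\le k}(X)=\bigoplus_{j=0}^{k}W_{2j}$, since a degree-$\le k$ polynomial in the matrix entries $M_{ij}=x_ix_j$ restricts on $X$ to a combination of $\chi_S$ with $|S|\le 2k$. Multiplicity-freeness then forces $V=\bigoplus_{j\in J}W_{2j}$ for some $J\subseteq\{0,\dots,\lfloor n/2\rfloor\}$.

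The heart of the proof --- and the step I expect to be the real obstacle --- is to deduce $\dim V\ge\binom{n}{\lceil n/4\rceil}$ from $\conv(X)=\TH_V(X)$. Since $V\subseteq\Pol_{\le\max J}(X)$ we get $\conv(X)=\TH_V(X)\supseteq\TH_{\max J}(X)\supseteq\conv(X)$, so the Lasserre relaxation of $\CUT_n$ is exact at level $\max J$; by Laurent's bound \cite{laurent2003lower}, translated into the present convention (degree $k$ in the matrix entries corresponds to degree $2k$ in the $\pm1$ variables, so Laurent's threshold $\lceil n/2\rceil$ becomes $\lceil n/4\rceil$ here), this forces $\max J\ge\lceil n/4\rceil$. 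The delicate point is upgrading ``$\max J$ is large'' to ``$\dim V$ is large'', since a priori $J$ might contain only indices near $\lfloor n/2\rfloor$, where $\binom{n}{2j}$ is small; here, exactly as in the parity case, I would argue that a $V$-sos certificate of a generic $\ell_{\max}-\ell$ --- which lives in the low-degree part $W_0\oplus W_2$ --- cannot be assembled purely from high-degree pieces, by tracking the top isotypic component of $\sum_i f_i^2$ under the multiplication rules of the $W_{2j}$ and using positive semidefiniteness of the associated Gram matrices, concluding that $V$ must contain the level $W_{2\lceil n/4\rceil}$ (indeed all levels below it), whence $\dim V\ge\binom{n}{2\lceil n/4\rceil}\ge\binom{n}{\lceil n/4\rceil}$; combined with $d\ge\dim V$ this proves the theorem. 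The main difficulty is precisely this bookkeeping: identifying which $G_{\cube}$-isotypic components a $V$-sos certificate of $\ell_{\max}-\ell$ is forced to involve, matching it against Laurent's threshold (including the factor of two from $\CUT_n$ being a quadratic image of the cube), and handling the mild distinction between $n$ even and $n$ odd caused by the ``unit'' $\chi_{[n]}$ with $\chi_{[n]}^2\equiv 1$, which only exists when $n$ is even and changes how high-degree pieces can contribute to sums of squares. Steps one and two should be routine once the general machinery of the paper is available.
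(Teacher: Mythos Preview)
Your overall plan—structure theorem, irreducible decomposition, then Laurent—matches the paper's route, but the step you flag as ``the real obstacle'' is where the proposal has a genuine gap, and the paper resolves it by a different device than the one you sketch. First, a setup difference: the paper does not work with $\cF(X,\RR)$ for $X=\{xx^T\}$ but with $\cF(C_n,\RR)$ for the full cube $C_n=\{-1,1\}^n$, treating linear forms on $\CUT_n$ as \emph{quadratic} forms on $C_n$. This makes Theorem~\ref{thm:main-structure-product} apply on the nose (the stabilizer of $\mathbf 1\in C_n$ is exactly $\fS_n$, whereas the stabilizer of $\mathbf 1\mathbf 1^T$ under congruence is $\{\pm\sigma:\sigma\in\fS_n\}$, for which the semidirect-complement hypothesis is not immediate). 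The irreducible decomposition is then $\cF(C_n,\RR)=\bigoplus_{k=0}^n\Pol_k(C_n)$, and Lemma~\ref{lem:cubegroupdecomp} shows that $\dim V<\binom{n}{k}$ forces every $f\in V$ to have the form $g(x)+e_n(x)h(x)$ with $\deg g,\deg h\le k-1$, where $e_n(x)=x_1\cdots x_n$.

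The key idea you are missing is how the paper disposes of the high-degree tail $e_n(x)h(x)$. It does \emph{not} argue, as you propose, that $V$ must contain a specific middle level; instead it substitutes $x_{m+i}=x_i$ for $m=\lfloor n/2\rfloor$, which forces $e_n\equiv 1$ on $C_m$ and collapses each $f\in V$ to a genuine degree-$(k{-}1)$ polynomial on $C_m$. Any $V$-sos certificate for a quadratic form in $x_1,\dots,x_m$ thus becomes a degree-$(k{-}1)$ sos certificate on $C_m$, i.e.\ $Q_{k-1}(\CUT_m)=\CUT_m$, and Laurent's bound is applied to $\CUT_m$—not $\CUT_n$—with no ``translation of conventions'' needed. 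By contrast, your claim that $W_{2\lceil n/4\rceil}\subseteq V$ is not justified and is dubious as stated: for $n$ even, multiplication by $\chi_{[n]}$ is an algebra involution exchanging $W_{2j}\leftrightarrow W_{n-2j}$, so a $V$ supported entirely above the middle produces exactly the same sos cone as its mirror below the middle and need not touch $W_{2\lceil n/4\rceil}$ at all. The appeal to ``exactly as in the parity case'' is also misplaced: for $\EVEN_n$ the irreducible dimensions $\binom{n}{k}$, $k\le n/2$, are monotone, so there is no high-degree escape to eliminate there.
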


Note that Theorems \ref{thm:main-parity-lb} and \ref{thm:main-cut-lb} above are stated in terms of \emph{exact} psd lifts of the parity polytope and cut polytope. Our framework also allows us to consider \emph{approximate} lifts. For example, for the parity polytope and the cut polytope, we show that the approximation quality of any \emph{approximate} equivariant psd lift of small size can be achieved with a small number of rounds of the sum-of-squares hierarchy. We refer to Sections \ref{sec:parity} and \ref{sec:cut} for the precise statements of these results.

\paragraph{Related work}
In independent work, Lee et al. \cite{lee2014power} have also considered symmetric psd lifts for the maximum cut problem (and more generally constraint satisfaction problems) and proved that such lifts cannot be much stronger than the sum-of-squares hierarchy. In their work however they adopted a definition of \emph{symmetric psd lift} that is different from the one we consider here: in \cite{lee2014power}, a psd lift of the cut polytope $\CUT_n = \pi(\S^d_+ \cap L)$ is called symmetric if for any permutation $\sigma$ of $\{1,\dots,n\}$ there exists a $d\times d$ \emph{permutation matrix} $\rho(\sigma)$ such that $\pi(\rho(\sigma) Y \rho(\sigma)^T) = \sigma \cdot \pi(Y)$ for all $Y \in \S^d_+ \cap L$ (where $\sigma \cdot \pi(Y)$ denotes the natural action of $\fS_n$ on $\S^n$ which permutes rows/columns). Note that this definition is more restrictive than ours since it requires $\rho(\sigma)$ to be a permutation matrix whereas in our Definition \ref{def:equivariantpsdlift}, we allow $\rho(\sigma)$ to be any invertible matrix in $GL(\RR^d)$. In this regard our framework is more general and applies to a wider class of psd lifts.

Our framework also allows us to study equivariant lifts with respect to arbitrary groups and not only permutation symmetry. Note for example that in Theorem \ref{thm:main-cut-lb} we consider lifts of the cut polytope that are equivariant with respect to $G_{\cube}$ which is larger than $\fS_n$. For this reason the lower bound as stated in Theorem \ref{thm:main-cut-lb} cannot be directly compared with the lower bound in \cite{lee2014power}. By modifying Lemma \ref{lem:cubegroupdecomp} however one could actually prove a result similar to Theorem \ref{thm:main-cut-lb} where the group $G_{\cube}$ is replaced with $\fS_n$: the lower bound we get is slightly worse but is still exponential in $n$.

Finally, note that in a very recent breakthrough paper \cite{lee2014lower} it was shown that any psd lift of the cut polytope must have exponential size (with no equivariance assumption). In fact the authors of \cite{lee2014lower} propose a new technique to obtain lower bounds on the psd rank \cite{psdranksurvey} which they apply to get exponential lower bounds for the cut polytope, the traveling salesman polytope, as well as the stable set polytope.

\paragraph{Organization of the paper} The paper is organized as follows. In Section \ref{sec:background} we review some background material in representation theory which are used later in the paper. In Section \ref{sec:orbitopes} we describe the general setting of the paper and we prove the structure theorem (Theorem \ref{thm:main-structure-general}) and we also consider the special case of symmetry groups with a product structure. In Section \ref{sec:parity} we study the parity polytope. We first prove a lower bound of $\lceil n/4 \rceil$ on the sum-of-squares hierarchy of the parity polytope. We then prove a key lemma showing that in $\cF(X,\RR)$ (where $X$ is the set of vertices of the parity polytope), any low-dimensional invariant subspace can be realized by low-degree polynomials. This allows us to obtain an exponential lower bound on the size of any equivariant psd lift of the parity polytope. Finally in Section \ref{sec:cut} we consider the cut polytope. Just as for the parity polytope we study low-dimensional invariant subspaces of $\cF(\{-1,1\}^n,\RR)$ and we show that they (essentially) consist of low-degree polynomials. This lemma combined with the structure theorem and the lower bound of Laurent \cite{laurent2003lower} yields an exponential lower bound on the size of any equivariant psd lift of the cut polytope.

\paragraph{Notations} We collect here some of the notations that are used throughout the paper. We denote by $\S^n$ the space of $n\times n$ real symmetric matrices and by $\S^n_+$ the cone of positive semidefinite matrices. The group of $n\times n$ real invertible matrices is denoted by $GL(\RR^n)$. The group of $n\times n$ orthogonal matrices is denoted $O(\RR^n)$. Also if $V$ is a vector space we let $V^*$ be the dual space of $V$ which consists of linear forms on $V$. For a set $X$ we let $\cF(X,\RR)$ be the space of real-valued functions on $X$.
The symmetric group on $n$ elements is denoted by $\fS_n$.
For a matrix $A$ with real entries we denote by $\|A\|_F$ the Frobenius norm of $A$ defined by $\|A\|_F = \trace(A^T A) =  (\sum_{i,j} A_{ij}^2)^{1/2}$.


\section{Background in representation theory}
\label{sec:background}

In this section we recall some basic facts concerning representation theory of finite groups which will be used later in the paper. We refer to \cite{serre1977linear} for a reference. Given a finite group $G$, a real finite-dimensional representation of $G$ is a pair $(V,\rho)$ where $V$ is a real finite-dimensional vector space and $\rho:G \rightarrow GL(V)$ is a group homomorphism. Two representations $(V_1,\rho_1)$ and $(V_2,\rho_2)$ are called $G$-isomorphic if there is an isomorphism $f:V_1\rightarrow V_2$ such that $f(\rho_1(g) x) = \rho_2(g) f(x)$ for all $x \in V_1$ and $g\in G$. A subspace $W$ of $V$ is an \emph{invariant subspace} for the representation $\rho$ if for any $x \in W$ and $g \in G$ we have $\rho(g) x \in W$.  The representation $(V,\rho)$ of $G$ is called \emph{irreducible} if it does not contain any invariant subspace except $\{0\}$ and $V$ itself.
Irreducible representations of a group $G$ are the building blocks of any representation of $G$. 
The following result is a standard fact in representation theory: any finite-dimensional real representation $(V,\rho)$ of $G$ can be decomposed as
\begin{equation}
 \label{eq:isotypicdecomp}
 V = V_1 \oplus \dots \oplus V_k
\end{equation}
where each $V_i$ is isomorphic to a direct sum of $m_i$ copies of an irreducible representation $W_i$ of $G$. This decomposition \eqref{eq:isotypicdecomp} is a canonical decomposition and is called the \emph{isotypic decomposition} of $V$. It satisfies the following important property: if $W$ is an irreducible subspace of $V$ that is $G$-isomorphic to $W_i$ then $W$ is contained in $V_i$. The subspace $V_i$ is called the isotypic component of the irreducible representation $W_i$ in $V$.

 This decomposition result can be used to prove the following proposition which will be needed later:
\begin{proposition}
\label{prop:lowdiminvsubspace}
Let $(V,\rho)$ be a real finite-dimensional representation of a finite group $G$ and assume
\[ V = W_1 \oplus \dots \oplus W_h \]
is a decomposition of $V$ into irreducibles, i.e., each $W_i$ is an irreducible subspace of $V$. Assume furthermore that the $W_i$ are sorted in nondecreasing order of dimension, i.e., $\dim W_1 \leq \dim W_2 \leq \dots \leq \dim W_h$. Assume $W$ is an invariant subspace of $V$ with $\dim W < \dim W_{i_0}$ for some $i_0 \in \{1,\dots,h\}$. Then necessarily $W$ is contained in the direct sum $W_1 \oplus \dots \oplus W_{i_0-1}$.
\end{proposition}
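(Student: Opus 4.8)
The plan is to decompose the invariant subspace $W$ using the isotypic decomposition of $V$ and then argue, component by component, that $W$ cannot touch any isotypic component whose irreducible type has dimension $\geq \dim W_{i_0}$. First I would consider the isotypic decomposition $V = V_1 \oplus \dots \oplus V_k$ guaranteed by the discussion preceding the proposition, where $V_j$ is the isotypic component of some irreducible $U_j$. Since $W$ is an invariant subspace, it is itself a representation of $G$ and hence decomposes into irreducibles $W = W'_1 \oplus \dots \oplus W'_m$. By the key property of the isotypic decomposition cited in the excerpt (``if $W'$ is an irreducible subspace of $V$ that is $G$-isomorphic to $U_j$ then $W' \subseteq V_j$''), each irreducible summand $W'_r$ lies inside the isotypic component $V_{j(r)}$ matching its type, so in fact $W = \bigoplus_j (W \cap V_j)$.

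Next I would translate the dimension hypothesis. The summands $W_1 \oplus \dots \oplus W_h$ are a (not necessarily isotypic, but) irreducible decomposition of $V$ sorted by nondecreasing dimension, and the hypothesis is $\dim W < \dim W_{i_0}$. The point is that $W_{i_0}, W_{i_0+1}, \dots, W_h$ all have dimension $\geq \dim W_{i_0} > \dim W$. Suppose for contradiction that $W$ is not contained in $W_1 \oplus \dots \oplus W_{i_0-1}$. Then $W$ has nontrivial intersection with — or more precisely, nonzero projection onto — the span of some $W_i$ with $i \geq i_0$. I would make this precise by passing to isotypic components: group the $W_i$ into their isotypic classes, and observe that any $W_i$ with $i \geq i_0$ (hence $\dim W_i \geq \dim W_{i_0}$) lies in an isotypic component $V_j$ whose irreducible type $U_j$ satisfies $\dim U_j = \dim W_i \geq \dim W_{i_0} > \dim W$. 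If $W \cap V_j \neq \{0\}$ for such a $j$, then $W \cap V_j$ is a nonzero invariant subspace of the isotypic component $V_j \cong U_j^{\oplus m_j}$, and every nonzero invariant subspace of $U_j^{\oplus m_j}$ contains a copy of $U_j$ and hence has dimension $\geq \dim U_j > \dim W$ — contradicting $W \cap V_j \subseteq W$.

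To finish, I would combine the two observations: $W = \bigoplus_j (W \cap V_j)$, and $W \cap V_j = \{0\}$ whenever $V_j$ contains some $W_i$ with $i \geq i_0$. Hence $W$ is contained in the sum of those isotypic components $V_j$ all of whose constituent $W_i$ have index $< i_0$; since the $W_i$ are sorted by dimension and each such $V_j$ is a sum of $W_i$'s with $i < i_0$, this sum is contained in $W_1 \oplus \dots \oplus W_{i_0-1}$, which is what we want.

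**Main obstacle.** The one genuinely nontrivial ingredient is the fact that every nonzero invariant subspace of an isotypic representation $U^{\oplus m}$ has dimension at least $\dim U$; this follows from Schur's lemma (a $G$-equivariant map from an irreducible $U$ into $U^{\oplus m}$ is either zero or injective, so the image of any nonzero irreducible summand of the invariant subspace is a full copy of $U$), and I expect this to be the step that needs the most care to state cleanly. The bookkeeping matching the ``sorted irreducible decomposition'' language of the proposition statement to the isotypic language of the preceding paragraph is routine but should be done carefully so the inclusion $\bigoplus_{j \text{ ``low''}} V_j \subseteq W_1 \oplus \dots \oplus W_{i_0-1}$ is transparent — in fact one can avoid isotypic components entirely by noting directly that if $\dim W < \dim W_{i_0}$ then $W$ cannot surject onto, nor contain, any $W_i$ with $i \ge i_0$, and then invoking the isotypic property only to control the ``diagonal'' copies; either route works.
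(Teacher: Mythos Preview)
Your proposal is correct and follows essentially the same route as the paper: decompose $W$ into irreducibles, observe each has dimension $< \dim W_{i_0}$ and hence is isomorphic to some $W_i$ with $i<i_0$, and invoke the isotypic-component property to conclude $W\subseteq W_1\oplus\dots\oplus W_{i_0-1}$. The paper compresses this into two sentences, leaving implicit the Schur-type step you single out as the ``main obstacle''; your more explicit bookkeeping via $W=\bigoplus_j(W\cap V_j)$ is a faithful unpacking of the same argument.
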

\begin{proof}
Any irreducible subrepresentation of $W$ is isomorphic to one of the $W_i$ for some $i < i_0$. Thus $W$ is contained in the direct sum of isotypic components of the $W_i$'s for $i < i_0$, thus $W$ is contained in $\bigoplus_{i=1}^{i_0-1} W_i$.
\if\siam1 \qquad \fi
\end{proof}

The following well-known proposition will be also be useful later.
\begin{proposition}
\label{prop:changebasis-orthrep}
Let $\rho:G\rightarrow GL(\RR^n)$ be a real finite-dimensional representation of a finite group $G$. Then there exists an invertible matrix $Q$ such that $Q\rho(g)Q^{-1}$ is orthogonal for all $g \in G$.
\end{proposition}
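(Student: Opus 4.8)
The plan is to use the classical averaging argument (``Weyl's unitary trick''). First I would build a new inner product on $\RR^n$ that is adapted to the representation: set
\[ \langle x, y \rangle_G := \frac{1}{|G|} \sum_{g \in G} (\rho(g) x)^T (\rho(g) y), \qquad x, y \in \RR^n. \]
This is visibly a symmetric bilinear form. It is positive definite: for $x \neq 0$ every summand $\|\rho(g)x\|^2$ is nonnegative, and the summand corresponding to the identity equals $\|\rho(e)x\|^2 = \|x\|^2 > 0$, using that $\rho$ is a group homomorphism into $GL(\RR^n)$, so $\rho(e) = I$.

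Next I would check that $\rho$ acts by isometries for this new inner product, and then convert that into the claimed conjugation statement. For any $h \in G$,
\[ \langle \rho(h)x, \rho(h)y\rangle_G = \frac{1}{|G|}\sum_{g\in G} (\rho(gh)x)^T(\rho(gh)y) = \langle x,y\rangle_G, \]
where the last step uses that $g \mapsto gh$ is a bijection of $G$. Now write the Gram matrix of $\langle\cdot,\cdot\rangle_G$ with respect to the standard basis as $M \in \S^n$, so that $\langle x, y\rangle_G = x^T M y$; since the form is positive definite, $M \in \S^n_+$ is invertible and factors as $M = Q^T Q$ for some invertible matrix $Q$ (e.g.\ a Cholesky factor, or $Q = M^{1/2}$). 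The isometry identity $\langle \rho(g)x,\rho(g)y\rangle_G = \langle x,y\rangle_G$ for all $x,y$ is equivalent to $\rho(g)^T M \rho(g) = M$, i.e.\ $\rho(g)^T Q^T Q\, \rho(g) = Q^T Q$, i.e.\ $(Q\rho(g)Q^{-1})^T (Q\rho(g)Q^{-1}) = I$. Hence $Q\rho(g)Q^{-1} \in O(\RR^n)$ for every $g \in G$, which is exactly what we want.

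I do not expect any genuine obstacle: the argument is entirely routine for a finite group (finiteness is what makes the average $\frac{1}{|G|}\sum_{g}$ well-defined). The only points that require a word of justification are the positive-definiteness of $\langle\cdot,\cdot\rangle_G$ (handled by isolating the identity term) and the existence of the factorization $M = Q^T Q$ of a positive definite symmetric matrix, which is standard linear algebra.
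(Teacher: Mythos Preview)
Your argument is correct and is essentially the same averaging (Weyl's unitary trick) approach as the paper's: the paper simply writes down the explicit choice $Q = \bigl(\sum_{g\in G}\rho(g)\rho(g)^T\bigr)^{-1/2}$ and leaves the verification to the reader, whereas you spell out the invariant inner product and the factorization $M = Q^T Q$. The only cosmetic difference is that the paper averages $\rho(g)\rho(g)^T$ rather than $\rho(g)^T\rho(g)$ and takes the inverse square root directly, but this amounts to the same construction.
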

\begin{proof}
With the choice $Q = (\sum_{g \in G} \rho(g) \rho(g)^T)^{-1/2}$, one can easily verify that $Q\rho(g)Q^{-1}$ is orthogonal for all $g \in G$.
\end{proof}


\section{The structure theorem}
\label{sec:orbitopes}

In this section we state and prove our main structure theorems. We first recall an important result from \cite{gouveia2011lifts} which charaterizes positive semidefinite lifts in terms of certificates of nonnegativity for linear inequalities.  Using this characterization, we prove our general structure theorem, Theorem \ref{thm:main-structure-general}. Then we look at an important special case where the symmetry group has a certain product structure and where the result can be strengthened. This special case will be useful in subsequent sections when we look at the parity polytope and the cut polytope.
We conclude the section with an illustration of the structure theorem on the polytope $P = [-1,1]^2$.

\subsection{The factorization theorem}

An important tool that allows us to study equivariant psd lifts is a certain \emph{factorization theorem}. In \cite{yannakakis1991expressing}, Yannakakis showed that LP lifts of a polytope $P$ can be characterized by nonnegative factorizations of the so-called \emph{slack matrix} of $P$.
For symmetric lifts over general cones, a similar factorization theorem exists and was proved by Gouveia et al. in \cite{gouveia2011lifts}.
The following result, which we will use later in the proof of our main theorem, can be proved using arguments identical to the proof of Theorem 2 in \cite{gouveia2011lifts}. We include a proof in Appendix \ref{sec:proof_factorization_theorem_equivariant_lifts} for completeness.
\begin{theorem}
\label{thm:factorization-equivlift}
Let $G$ be a finite group acting on $\RR^n$ and let $X = G\cdot x_0$ where $x_0 \in \RR^n$.
 Assume $\conv(X) = \pi(\S^d_+ \cap L)$ is a $G$-equivariant psd lift of $\conv(X)$ of size $d$, i.e., there is a homomorphism $\rho:G\rightarrow GL(\RR^d)$ such that conditions (i) and (ii) of Definition \ref{def:equivariantpsdlift} hold. Then there exists a map $A:X \rightarrow \S^d_+$ with the following properties:
\begin{itemize}
\item[(i)] For any linear form $\ell$ on $\RR^n$ there exists $B(\ell) \in \S^d_+$ such that if we let $\ell_{\max} := \max_{x \in X} \ell(x)$ we have:
\[ \ell_{\max} - \ell(x) = \langle A(x), B(\ell) \rangle \quad \forall x \in X. \]
\item[(ii)] The map $A$ satisfies the following equivariance relation:
\[ A(g\cdot x) = \rho(g)A(x)\rho(g)^T \quad \forall x \in X, \; \forall g \in G. \]
In particular if $H$ denotes the stabilizer of $x_0$, then we have:
\begin{equation}
\label{eq:Ax0stabilizer}
 A(x_0) = \rho(h) A(x_0) \rho(h)^T \quad \forall h \in H.
\end{equation}
\end{itemize}
Furthermore, the representation $\rho:G\rightarrow GL(\RR^d)$ can be taken to be orthogonal, i.e., $\rho(g) \in O(\RR^d)$ for all $g \in G$.
\end{theorem}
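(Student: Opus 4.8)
The plan is to mimic the proof of the nonnegative factorization theorem of Yannakakis, adapted to the psd cone and carrying the group action through every step. First I would set up the slack data: since $\conv(X) = \pi(\S^d_+ \cap L)$, by standard conic-lift duality (the ``inner'' and ``outer'' descriptions), for each point $x \in X$ there is a matrix $\widehat{A}(x) \in \S^d_+ \cap L$ with $\pi(\widehat{A}(x)) = x$ (a lift of the vertex $x$), and for each linear form $\ell$ on $\RR^n$ with $\ell_{\max} := \max_{x\in X}\ell(x)$, the composed affine functional $y \mapsto \ell_{\max} - (\ell\circ\pi)(y)$ is nonnegative on $\S^d_+ \cap L$, hence — because $\S^d_+$ is self-dual and $L$ is affine — it agrees on $L$ with $Y \mapsto \langle B(\ell), Y\rangle + c$ for some $B(\ell) \in \S^d_+$; evaluating the constant using any feasible point shows the constant is absorbed so that $\ell_{\max} - \ell(x) = \langle \widehat{A}(x), B(\ell)\rangle$ for all $x\in X$. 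This is exactly the content invoked as ``arguments identical to the proof of Theorem 2 in \cite{gouveia2011lifts}''. At this stage property (i) holds for the map $\widehat{A}$, but $\widehat{A}$ need not be equivariant.

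Next I would average to enforce equivariance. Fix a base lift $\widehat{A}(x_0) \in \S^d_+$ of $x_0$, and for each $x = g\cdot x_0 \in X$ define $A(x) := \rho(g)\,\widehat{A}(x_0)\,\rho(g)^T$. The issue is well-definedness: if $g\cdot x_0 = g'\cdot x_0$ then $h := g^{-1}g' \in H$, the stabilizer of $x_0$, and we need $\rho(h)\widehat{A}(x_0)\rho(h)^T = \widehat{A}(x_0)$. This is not automatic, so instead I would first \emph{symmetrize the base point}: replace $\widehat{A}(x_0)$ by $A(x_0) := \frac{1}{|H|}\sum_{h\in H}\rho(h)\widehat{A}(x_0)\rho(h)^T$. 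This averaged matrix is still in $\S^d_+$ (a convex combination of congruences of a psd matrix, and congruence preserves $\S^d_+$), and it satisfies \eqref{eq:Ax0stabilizer} by the standard averaging/reindexing argument. Then $A(g\cdot x_0) := \rho(g) A(x_0)\rho(g)^T$ is well-defined and satisfies the equivariance relation in (ii) by construction, using that $\rho$ is a homomorphism.

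It remains to recheck property (i) for this new, symmetrized map $A$. Here I would average $B(\ell)$ as well, or more cleanly observe the following: for a fixed $\ell$, consider the functional $\ell' := $ the $G$-average is not the right move since $\ell_{\max}-\ell$ is not $G$-invariant; instead, for $x = g\cdot x_0$, write $\langle A(x), B(\ell)\rangle = \langle \rho(g)A(x_0)\rho(g)^T, B(\ell)\rangle = \langle A(x_0), \rho(g)^T B(\ell)\rho(g)\rangle = \frac{1}{|H|}\sum_{h\in H}\langle \rho(h)\widehat{A}(x_0)\rho(h)^T, \rho(g)^T B(\ell)\rho(g)\rangle = \frac{1}{|H|}\sum_{h\in H}\langle \widehat{A}(g h\cdot x_0 \text{ lift}), \ldots\rangle$; the cleanest route is to note $\langle A(x_0),\rho(g)^TB(\ell)\rho(g)\rangle = \frac1{|H|}\sum_{h\in H}\langle\widehat A(x_0),\rho(hg)^TB(\ell)\rho(hg)\rangle$, and since $\langle\widehat A(x_0),\rho(k)^TB(\ell)\rho(k)\rangle = \langle\rho(k)\widehat A(x_0)\rho(k)^T,B(\ell)\rangle = \langle\widehat A(k\cdot x_0),B(\ell)\rangle = \ell_{\max}-\ell(k\cdot x_0)$ only when $\rho(k)\widehat A(x_0)\rho(k)^T$ is a valid lift of $k\cdot x_0$ — which holds because $\rho(k)$ preserves $\S^d_+\cap L$ by Definition \ref{def:equivariantpsdlift}(i) and $\pi(\rho(k)\widehat A(x_0)\rho(k)^T)=k\cdot\pi(\widehat A(x_0))=k\cdot x_0$ by \eqref{eq:linequivariance}. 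So each term equals $\ell_{\max}-\ell(hg\cdot x_0)$; but $hg\cdot x_0$ ranges over a set that need not be constant, so to close the argument I would instead replace $\widehat A(x_0)$ from the start by \emph{any} lift and use the averaged $B$: define $B'(\ell) := \frac1{|H|}\sum_{h\in H}\rho(h)^TB(\ell)\rho(h)\in\S^d_+$, and check $\langle A(x_0),B(\ell)\rangle$ versus the original. The main obstacle is precisely this bookkeeping — making the averaging over $H$ on the $A$-side and over $\ell$-side consistent so that (i) survives — and the resolution is that one should symmetrize $A$ and $B$ compatibly (average $\widehat A$ over $H$, and correspondingly the identity $\ell_{\max}-\ell(x_0) = \langle\widehat A(x_0),B(\ell)\rangle$ becomes $\ell_{\max}-\ell(x_0) = \langle A(x_0), B'(\ell)\rangle$ after simultaneously averaging both sides, using $H$-invariance of $x_0$). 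Finally, Proposition \ref{prop:changebasis-orthrep} lets us conjugate $\rho$ by a fixed $Q$ to make it orthogonal; absorbing $Q$ into $L$, $\pi$, and $A$ (replacing $A(x)$ by $Q A(x) Q^T$, etc.) preserves all the stated properties, giving the final ``furthermore'' clause.
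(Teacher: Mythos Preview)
Your overall strategy is the same as the paper's: set up the general psd factorization via conic duality, then build an equivariant section $A$ by averaging a chosen lift of $x_0$ over the stabilizer $H$ and propagating by $\rho(g)$, and finally conjugate by $Q$ from Proposition~\ref{prop:changebasis-orthrep} to make $\rho$ orthogonal. The construction of $A$ and the orthogonalization step are exactly what the paper does.

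Where you make life harder than necessary is in re-verifying property (i) for the symmetrized $A$. You try to trace the \emph{original} $B(\ell)$ through the averaging and end up proposing to symmetrize $B$ over $H$ as well; the double-averaging identity you sketch at the end is not clearly correct as written (it becomes a double sum over $H\times H$, not a single average). The clean observation you are circling around but never use is this: the averaged matrix
\[
A(x_0) = \frac{1}{|H|}\sum_{h\in H}\rho(h)\,\widehat{A}(x_0)\,\rho(h)^T
\]
is not merely in $\S^d_+$; each summand lies in $\S^d_+\cap L$ by Definition~\ref{def:equivariantpsdlift}(i), so $A(x_0)\in \S^d_+\cap L$ since $L$ is affine, and each summand projects to $h\cdot x_0 = x_0$ by \eqref{eq:linequivariance}, so $\pi(A(x_0))=x_0$. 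The same reasoning shows $A(x)\in \S^d_+\cap L$ and $\pi(A(x))=x$ for every $x\in X$. Hence the equivariant $A$ is itself a valid section of the lift, and you can simply \emph{reapply} the general factorization theorem you already established in your first paragraph to this new $A$, obtaining a (possibly different) $B(\ell)\in\S^d_+$ directly. No compatibility bookkeeping between the old and new $B$ is needed. This is exactly how the paper closes the argument.
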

\begin{proof}
See Appendix \ref{sec:proof_factorization_theorem_equivariant_lifts} for a proof.
\if\siam1 \qquad \fi
\end{proof}

\subsection{Structure theorem for general orbitopes}

We are now ready to state and prove our main structure theorem for orbitopes (Theorem \ref{thm:main-structure-general}) which we restate here for convenience.

\begin{reptheorem}{thm:main-structure-general}
Let $G$ be a finite group acting on $\RR^n$ and let $X = G\cdot x_0$ where $x_0 \in \RR^n$.
Assume $\conv(X)$ has a $G$-equivariant psd lift of size $d$. Then there exists a $G$-invariant subspace $V$ of $\cF(X,\RR)$ such that:
\begin{itemize}
\item[(i)] For any linear form $\ell$ on $\RR^n$, $\ell_{\max} - \ell$ is $V$-sos on $X$, where $\ell_{\max} := \displaystyle{\max_{x \in X} \ell(x)}$.
\item[(ii)] $\dim V \leq d^3$.
\end{itemize}
\end{reptheorem}
\begin{proof}
Assume we have a $G$-equivariant psd lift of size $d$ of $\conv(X)$. By the factorization theorem (Theorem \ref{thm:factorization-equivlift}), we know that there exist maps $A:X\rightarrow \S^d_+$ and $B:(\RR^n)^* \rightarrow \S^d_+$ such that for any linear form $\ell \in (\RR^n)^*$ we have:
\begin{equation}
 \label{eq:factorization_orbitope}
 \ell_{\max} - \ell(x) = \langle A(x), B(\ell) \rangle \quad \forall x \in X
\end{equation}
where $\ell_{\max} := \max_{x \in X} \ell(x)$.
Furthermore, since the lift is $G$-equivariant, the map $A$ satisfies the equivariance relation
\begin{equation}
 \label{eq:equivarianceA}
 A(g\cdot x) = \rho(g)A(x)\rho(g)^T \quad \forall x \in X \;\; \forall g \in G
\end{equation}
where $\rho:G\rightarrow O(\RR^d)$ is a group homomorphism.

Let $H$ be the stabilizer of $x_0$, i.e., $H = \{g \in G : g\cdot x_0 = x_0\}$. Note that the set $X$ can be identified with $G/H$, the set of left cosets of $H$. Furthermore the left action of $G$ on $X$ is isomorphic to the left action of $G$ on $G/H$.
For simplicity of notation, we will thus think of functions on $X$ as functions on $G/H$, or equivalently, as functions on $G$ that are constant on the left cosets of $H$. For example since the point $x_0$ is identified with the left coset $1_G H$ of $H$, we will write $A(1_G)$ instead of $A(x_0)$.

We first show how to construct the subspace $V$ of $\cF(X,\RR) \isom \cF(G/H,\RR)$ and then we prove that it satisfies the properties of the statement.

$\bullet$ \emph{Definition of $V$}: Let
\[
A(1_G) = \sum_{i=1}^r \lambda_i P_{W_i}
\]
be an eigendecomposition of $A(1_G)$, where each $P_{W_i}$ is an orthogonal projection on the eigenspace $W_i$. Observe that from Equation \eqref{eq:Ax0stabilizer} we have $A(1_G) = \rho(h) A(1_G) \rho(h)^T$ for all $h \in H$. Since $\rho(h)$ is orthogonal, this means that $A(1_G)$ commutes with $\rho(h)$, and so $\rho(h) W_{i} = W_i$ for each eigenspace $W_i$ of $A(1_G)$, which also implies that $\rho(h) P_{W_i} \rho(h)^T = P_{W_i}$. An important consequence of this is that the functions $g\mapsto \rho(g) P_{W_i} \rho(g)^T$ are constant on the left cosets of $H$, thus we can think of them as functions on $G/H$. Let $V$ be the subspace of $\cF(G/H,\RR)$ spanned by the matrix entries of $x \in G/H\mapsto \rho(x) P_{W_i} \rho(x)^T$, namely
\begin{equation}
\label{eq:defVgeneral}
V = \linspan \Bigl\{ x\in G/H\mapsto (\rho(x) P_{W_i} \rho(x)^T)_{k,l}, \; i=1,\dots,r \;  \text{ and } \; k,l=1,\dots,d \Bigr\}.
\end{equation}

$\bullet$ It is clear that $V$ is $G$-invariant (since $\rho$ is a homomorphism) and that $\dim V \leq d^3$. It thus remains to show that $\ell_{\max} - \ell$ is $V$-sos for any $\ell \in (\RR^n)^*$. We know from \eqref{eq:factorization_orbitope} that there exists $B=B(\ell) \in \S^d_+$ such that $\ell_{\max} - \ell(x) = \langle A(x), B \rangle$ for any $x \in X$. Now for any $x \in G/H$ we have:
\begin{equation}
\label{eq:structurethm-mainlemma-eqAxB}
\langle A(x), B \rangle = \langle \rho(x) A(1_G) \rho(x)^T, B \rangle = \sum_{i=1}^r \lambda_i \langle \rho(x) P_{W_i} \rho(x)^T, B \rangle.
\end{equation}
Note that even though $\rho(x)$ is \emph{not} well-defined when $x \in G/H$, the quantities $\rho(x) A(1_G) \rho(x)^T$ and $\rho(x) P_{W_i} \rho(x)^T$ are well-defined and do not depend on the representative of the coset $x \in G/H$ (cf. previous remarks).
Observe that since $P_{W_i}$ is an orthogonal projection and $\rho(x)$ is orthogonal, we have $(\rho(x) P_{W_i} \rho(x)^T)^2 = \rho(x) P_{W_i} \rho(x)^T$. Thus continuing from Equation \eqref{eq:structurethm-mainlemma-eqAxB} we can write:
\[
\langle A(x), B \rangle = \sum_{i=1}^r \lambda_i \langle (\rho(x) P_{W_i} \rho(x)^T)^2, B \rangle
=
\sum_{i=1}^r \lambda_i \|L^T \rho(x) P_{W_i} \rho(x)^T\|_F^2
\]
where $L$ is such that $B = L^T L$. Since each entry function of 
\[ x\in G/H \mapsto L^T \rho(x) P_{W_i} \rho(x)^T \]
lives in $V$, this shows that $\langle A(x), B \rangle$ is a sum-of-squares of functions in $V$, which is what we wanted.
\end{proof}

\subsection{Special case: regular orbitopes}
\label{sec:regular-orbitopes}

In this section we look at a special case where the symmetry group $G$ has a certain product structure. In this case we show that one can improve Theorem \ref{thm:main-structure-general} and obtain a better bound on the dimension of the subspace $V$.

We saw in the proof of Theorem \ref{thm:main-structure-general} that the stabilizer $H$ of $x_0$ plays an important role. In this section we consider the situation where the group $G$ has a \emph{semidirect product} structure
\begin{equation}
\label{eq:Gsemidirectproduct}
G = N\rtimes H
\end{equation}
where $N$ is a normal subgroup of $G$ and $H$ is the stabilizer of $x_0$. Concretely, \eqref{eq:Gsemidirectproduct} simply means that any element $g \in G$ can be written in a unique way as $g = nh$ where $n \in N$ and $h \in H$.
The following example illustrates this situation.
\begin{example}[The hypercube]
\label{ex:hypercube-regular-orbitope}
Let $G \subset GL(\RR^n)$ be the group of signed permutations, i.e., the group of permutation matrices where nonzero entries are either $+1$ or $-1$. Let $x_0 = (1,\dots,1)$, and note that $G\cdot x_0 = \{-1,1\}^n$. The stabilizer of $x_0$ is $\fS_n$, the group of permutation matrices (with positive signs everywhere). Observe that $G$ has the following semidirect product structure: $G = N \rtimes \fS_n$ where $N$ is the subgroup of diagonal matrices with $+1$ or $-1$ on the diagonal. $\lozenge$
\end{example}

Let $X = G\cdot x_0$. If \eqref{eq:Gsemidirectproduct} holds and $H$ is the stabilizer of $x_0$, then it is easy to see that we have $G\cdot x_0 = N\cdot x_0$, i.e., $X$ is an $N$-orbitope: indeed if $x = g\cdot x_0$ for some $g \in G$, then since we can write $g = nh$ where $n \in N$ and $h \in H$ we have $g\cdot x_0 = nh\cdot x_0 = n\cdot x_0$ since $h \cdot x_0 = x_0$. Furthermore, one can show that for any $x \in N\cdot x_0$, there is a \emph{unique} $n \in N$ such that $n\cdot x_0 = x$ (indeed, if $n,n' \in N$ are such that $n\cdot x_0 = n'\cdot x_0$, then $n'^{-1} n$ stabilizes $x_0$ and so by the semidirect product property we must have $n'^{-1} n = 1_G$). In this case we say that $X$ is an $N$-\emph{regular orbitope}, since the action of $N$ on $X$ is \emph{regular}.

We now state and prove our structure theorem in this particular case.

\begin{theorem}
\label{thm:main-structure-product}
Let $G$ be a finite group acting on $\RR^n$ and let $X = G\cdot x_0$ where $x_0 \in \RR^n$.
Let $H$ be the stabilizer of $x_0$ and assume that $G$ has the form $G = N\rtimes H$ where $N$ is a normal subgroup of $G$.
Assume $P$ has a $G$-equivariant psd lift of size $d$. Then there exists a $G$-invariant subspace $V$ of $\cF(X,\RR)$ with the following properties:
\begin{itemize}
\item[(i)] For any linear form $\ell$ on $\RR^n$, $\ell_{\max} - \ell$ is $V$-sos on $X$, where $\ell_{\max} := \displaystyle{\max_{x \in X} \ell(x)}$.
\item[(ii)] $\dim V \leq \alpha_N(d) \cdot d$ where $\alpha_N(d)$ is the largest dimension of any real irreducible representation of $N$ of dimension $\leq d$ (in particular $\dim V \leq d^2$).
\end{itemize}
\end{theorem}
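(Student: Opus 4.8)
The plan is to follow the proof of Theorem~\ref{thm:main-structure-general}, but to exploit the \emph{regularity} of the $N$-action on $X$ in order to replace the crude bound $d^3$ by $\alpha_N(d)\cdot d$. First I would invoke the factorization theorem (Theorem~\ref{thm:factorization-equivlift}) to obtain a map $A\colon X\to\S^d_+$, an orthogonal homomorphism $\rho\colon G\to O(\RR^d)$, and for each linear form $\ell$ a matrix $B(\ell)\in\S^d_+$, with $\ell_{\max}-\ell(x)=\langle A(x),B(\ell)\rangle$ for all $x\in X$ and $A(g\cdot x)=\rho(g)A(x)\rho(g)^T$. Since $G=N\rtimes H$ with $H=\mathrm{Stab}(x_0)$, we have $X=N\cdot x_0$ and the $N$-action on $X$ is regular, so $X$ may be identified with $N$: each $x\in X$ is $n_x\cdot x_0$ for a \emph{unique} $n_x\in N$, and under this identification $A(x)=\rho(n_x)A_0\rho(n_x)^T$ with $A_0:=A(x_0)$.

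The candidate subspace is then defined directly as $V:=\linspan\{\,x\mapsto\langle v,\rho(n_x)w\rangle : v,w\in\RR^d\,\}$, i.e.\ the span of all matrix coefficients of $\rho|_N$ viewed as functions on $X\cong N$. (Note that, unlike in the proof of Theorem~\ref{thm:main-structure-general}, no well-definedness issue arises here because $X\cong N$ directly rather than $X\cong G/H$, so one does not even need to invoke $\rho(H)$-invariance of $\range(A_0)$.) Three things must be checked: that $V$ is $G$-invariant, that $\ell_{\max}-\ell$ is $V$-sos on $X$ for every $\ell$, and that $\dim V\le\alpha_N(d)\cdot d$. The sos property is the same computation as in Theorem~\ref{thm:main-structure-general}: writing $A_0=A_0^{1/2}A_0^{1/2}$ and $B(\ell)=B(\ell)^{1/2}B(\ell)^{1/2}$ and expanding $\langle\rho(n_x)A_0\rho(n_x)^T,B(\ell)\rangle=\|B(\ell)^{1/2}\rho(n_x)A_0^{1/2}\|_F^2$ exhibits $\ell_{\max}-\ell$ as a sum of squares of functions $x\mapsto\langle b_k,\rho(n_x)a_m\rangle$, each lying in $V$. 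For the dimension bound, decompose $\rho|_N$ into irreducible $N$-representations $\sigma_1\oplus\cdots\oplus\sigma_m$ (with multiplicity), so $\sum_j\dim\sigma_j=d$; choosing a basis in which $\rho|_N$ is block diagonal, $V$ is spanned by the $\sum_j(\dim\sigma_j)^2$ entry functions of the blocks, whence $\dim V\le\sum_j(\dim\sigma_j)^2\le(\max_j\dim\sigma_j)\sum_j\dim\sigma_j=(\max_j\dim\sigma_j)\,d$. Each $\sigma_j$ is an irreducible representation of $N$ of dimension $\le d$, so $\max_j\dim\sigma_j\le\alpha_N(d)$, giving $\dim V\le\alpha_N(d)\,d$; and $\alpha_N(d)\le d$ yields the crude bound $\dim V\le d^2$.

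The only step I expect to require genuine care is $G$-invariance of $V$. The naive worry is that translating $f_{v,w}$ by $g$ replaces $w$ by $\rho(\text{$H$-part of }g^{-1}n_x)^{-1}w$, and the $H$-part of $g^{-1}n_x$ a priori depends on $x$, which would destroy invariance. The resolution, which is the one place normality of $N$ is essential, is that if $g^{-1}=\bar n\bar h$ with $\bar n\in N,\ \bar h\in H$, then $g^{-1}n_x=\bar n(\bar h n_x\bar h^{-1})\bar h$ with $\bar h n_x\bar h^{-1}\in N$, so that $n_{g^{-1}\cdot x}=\bar n(\bar h n_x\bar h^{-1})$ and the $H$-part is always $\bar h$, \emph{independent of $x$}. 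Hence $\rho(n_{g^{-1}\cdot x})=\rho(g)^{-1}\rho(n_x)\rho(\bar h)^{-1}$, and using orthogonality of $\rho(g)$ one gets $g\cdot f_{v,w}=f_{\rho(g)v,\,\rho(\bar h)^{-1}w}\in V$. Once this is in place, together with the two easy verifications above, the theorem follows.
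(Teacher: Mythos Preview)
Your proposal is correct and follows essentially the same approach as the paper: both define $V$ as the span of the matrix coefficients of $\rho|_N$ (viewed as functions on $X\cong N$), both verify the sos property via $\langle A(x),B\rangle=\|L_B^T\rho(n_x)L_A\|_F^2$, and both obtain the dimension bound by block-diagonalizing $\rho|_N$ into irreducibles. Your $G$-invariance argument, computing $n_{g^{-1}\cdot x}=\bar n(\bar h n_x\bar h^{-1})$, is exactly the paper's observation that the left $G$-action on $N$ is $g\cdot x=nhxh^{-1}$ (written from the inverse side); the only cosmetic difference is that you invoke orthogonality of $\rho$ to write $\rho(g)v$ in place of $(\rho(g)^{-1})^Tv$, whereas the paper remarks that orthogonality is available but not needed here.
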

\begin{proof}
Assume we have a $G$-equivariant psd lift of size $d$ of $\conv(X)$. By the factorization theorem (Theorem \ref{thm:factorization-equivlift}), we know that there exist maps $A:X\rightarrow \S^d_+$ and $B:(\RR^n)^* \rightarrow \S^d_+$ such that for any linear form $\ell \in (\RR^n)^*$ we have:
\begin{equation}
 \label{eq:factorization_orbitope_product}
 \ell_{\max} - \ell(x) = \langle A(x), B(\ell) \rangle \quad \forall x \in X
\end{equation}
where $\ell_{\max} := \max_{x \in X} \ell(x)$.
Furthermore, since the lift is $G$-equivariant, the map $A$ satisfies the equivariance relation
\begin{equation}
 \label{eq:equivarianceA_product}
 A(g\cdot x) = \rho(g)A(x)\rho(g)^T \quad \forall x \in X \;\; \forall g \in G
\end{equation}
where $\rho:G\rightarrow GL(\RR^d)$ is a group homomorphism (in fact we can choose $\rho$ to take values in $O(\RR^d)$ however we will not need this in this proof).

Note that since $G$ has the product structure $G = N\rtimes H$ we can identify $X$ with $N$, and the left action of $G$ on $X$ is isomorphic to the left action of $G$ on $N$ defined by $g\cdot x = nh x h^{-1} \in N$, where $g=nh \in G$ and $x \in N$.
Thus in the rest of the proof we will think of functions on $X$ as functions on $N$. For example since the point $x_0$ is identified with $1_N$, we will write $A(1_N)$ instead of $A(x_0)$.

We now define the subspace $V$ of $\cF(X,\RR) \isom \cF(N,\RR)$ and then we show it has the required properties. 

$\bullet$ \emph{Definition of $V$}: Let $V$ be the subspace of $\cF(N,\RR)$ spanned by the matrix entry functions of $\rho|_N$, i.e.,
\[
V = \linspan\Bigl\{ x \in N \mapsto \rho(x)_{ij}, i,j=1,\dots,d \Bigr\}.
\]

$\bullet$ We need to show that $V$ is a $G$-invariant subspace and that Properties (i) and (ii) in the statement of the theorem are satisfied.
\begin{itemize}
\item[$\ast$] To see why $V$ is $G$-invariant, note that for any $x \in N$ and $g = nh \in G$ we have $\rho(g\cdot x) = \rho(nhxh^{-1}) = \rho(nh) \rho(x) \rho(h^{-1})$ thus for any $i,j$ the function $x\mapsto \rho(g\cdot x)_{ij}$ is a linear combination of the functions $x\mapsto \rho(x)_{k,l}$. This shows that $V$ is $G$-invariant.
\item[$\ast$] To prove that $\dim V \leq \alpha_N(d) \cdot d$, observe that if $n_1,\dots,n_k$ are the dimensions of the irreducible components of the representation $\rho|_{N}:N\rightarrow GL(\RR^d)$ of $N$, then the matrices $\rho(x)$ ($x \in N$) are all, up to a global change of basis, block-diagonal with blocks of size $n_1,\dots,n_k$. Thus we have $\dim V \leq \sum_i n_i^2 \leq \sum_i n_i \alpha_N(d) = \alpha_N(d) \cdot d$ since each $n_i \leq \alpha_N(d)$ and $\sum_i n_i = d$.
\item[$\ast$] Finally, it remains to prove Property (i). Let $\ell \in (\RR^n)^*$ and let $B=B(\ell) \in \S^d_+$ such that \eqref{eq:factorization_orbitope_product} is true. Note that for any $x \in N$ we have:
\begin{equation}
\label{eq:ABkron1}
\begin{aligned}
\langle A(x), B \rangle &= \langle \rho(x) A(1_N) \rho(x)^T, B \rangle
= \trace(\rho(x) A(1_N) \rho(x)^T B).
\end{aligned}
\end{equation}
Since $A(1_N)$ and $B$ are positive semidefinite matrices, we can write $A(1_N) = L_A L_A^T$ and $B = L_B L_B^T$ for some matrices $L_A$ and $L_B$. Then we have:
\[
\langle A(x), B \rangle = \trace(\rho(x) L_A L_A^T \rho(x)^T L_B L_B^T) = \|L_B^T \rho(x) L_A\|_F^2.
\]
Since each entry function of $x \mapsto L_B^T \rho(x) L_A$ lives in $V$, it follows that $x\mapsto \langle A(x), B \rangle$ has a sum-of-squares representation with functions from $V$.  This completes the proof.
\end{itemize}
\end{proof}

\begin{remark}
\label{rem:approx}
Observe that the theorem above could be stated more generally without using the language of \emph{lifts}. Assume $p(x)$ is a function that is nonnegative on $X$ and has an \emph{equivariant certificate of nonnegativity} of the form
\[ p(x) = \langle A(x), B\rangle \quad \forall x \in X \]
where $B\in \S^d_+$ and $A:X\rightarrow \S^d_+$ satisfies the equivariance relation:
\[ A(g\cdot x) = \rho(g) A(x) \rho(g)^T \quad \forall x \in X, \; \forall g \in G. \]
Then the arguments in the proofs above show that $p(x)$ must be a sum of squares of functions from a $G$-invariant subspace $V$ of $\cF(X,\RR)$ whose dimension is bounded by a certain function of $d$ ($d^3$ in the setting of Theorem \ref{thm:main-structure-general} and $\alpha_N(d) d$ in the setting of Theorem \ref{thm:main-structure-product}). In the theorems above, the function $p(x)$ corresponds to the facet-defining linear function $p(x)=\ell_{\max} - \ell(x)$ but the proofs did not use this specific form of the function $p(x)$. $\lozenge$
\end{remark}

\subsection{Example: the square $[-1,1]^2$}

Before concluding this section we illustrate Theorem \ref{thm:main-structure-general} by considering the example of the square $P=[-1,1]^2$. As we saw in Example \ref{ex:hypercube-regular-orbitope} the square $[-1,1]^2$ is a $N$-regular orbitope where the group $N$ is the group of $2\times 2$ diagonal matrices with $+1$ or $-1$ on the diagonal and where $x_0 = (1,1) \in \RR^2$. The full symmetry group of the square $[-1,1]^2$ is the dihedral group of order 8, which we will denote by $G$ here. Note that $G$ has a semidirect product structure $G = N\rtimes \fS_2$ where $\fS_2$ is the group of $2\times 2$ permutation matrices and corresponds to the stabilizer of $x_0$.

In Example \ref{ex:square} in the introduction we gave a construction of a $G$-equivariant psd lift of $P$ of size 3. 
 We can apply Theorem \ref{thm:main-structure-product} to this lift: Theorem \ref{thm:main-structure-product} says that there must exist a $G$-invariant subspace $V$ of $\cF(\{-1,1\}^2,\RR)$ with the following properties:
\begin{itemize}
\item[(i)] Any facet inequality $\ell(x) \leq \ell_{\max}$ of $P$ has a sum-of-squares certificate with functions from $V$:
\[ \ell_{\max} - \ell(x) = \sum_{j} f_j(x)^2 \quad \forall x \in \{-1,1\}^2 \]
where $f_j \in V$.
\item[(ii)] $\dim V \leq 1\cdot 3  = 3$ (indeed $\alpha_N(3) = 1$ since $N$ is isomorphic to $\ZZ_2^2$ for which all the real irreducible representations have dimension one).
\end{itemize}
It is actually not difficult to construct this subspace $V$ explicitly. Indeed, let $V$ be the subspace spanned by the following three functions:
\[ (x_1,x_2) \in \{-1,1\}^2 \mapsto 1, \quad (x_1,x_2) \in \{-1,1\}^2 \mapsto x_1, \quad (x_1,x_2) \in \{-1,1\}^2 \mapsto x_2. \]
Clearly $\dim V \leq 3$, and it is easy to see that this subspace is $G$-invariant. To check that point (i) above is true consider for example the facet inequality $1-x_1 \geq 0$ of $P$. Then we can verify that for any $x \in \{-1,1\}^2$ we have:
\[ 1-x_1 = f(x)^2 \]
with $f(x) = (1-x_1)/\sqrt{2}$ (note that $f \in V$). The same is also true for the other facet inequalities. Thus the subspace $V$ we just constructed satisfies points (i) and (ii).

Now one may wonder if there exists an equivariant psd lift of the square $P=[-1,1]^2$ of size 2. Using Theorem \ref{thm:main-structure-product} this would mean that there exists a $G$-invariant subspace $V$ of $\cF(\{-1,1\}^2)$ of dimension $\leq 2$ which allows us to certify the four facet inequalities of $P$ using sum-of-squares. Later in the paper we will study in more detail the space of functions on the hypercube $\{-1,1\}^n$ and their invariant subspaces, cf. Lemma \ref{lem:cubegroupdecomp}. Using this lemma one can actually show that such a subspace $V$ of dimension $\leq 2$ cannot exist, ruling out the existence of $G$-equivariant psd lifts of size 2 of the square $[-1,1]^2$.

\begin{remark} Actually it is known that there does not exist any psd lift (even a nonequivariant one) of the square $[-1,1]^2$ of size 2. Indeed it was shown in \cite{gouveia2013polytopes} that any psd lift of a full-dimensional polytope $P$ in $\RR^n$ must have size at least $n+1$. $\lozenge$
\end{remark}


\section{The parity polytope}
\label{sec:parity}

In this section we consider the case of the parity polytope and we prove exponential
lower bounds on equivariant psd lifts using the results from the previous
section.

\subsection{Definitions}

Define $\EVEN_n$ to be the set of points $x \in \{-1,1\}^n$ that have an even
number of $-1$'s, i.e.:
\begin{equation}
\label{eq:evenpoints}
\EVEN_n = \left\{ x \in \{-1,1\}^n \; : \; \prod_{i=1}^n x_i = 1\right\}.
\end{equation}
The convex hull of $\EVEN_n$ is called the \emph{parity polytope} and is
denoted $\PAR_n$:
\[ \PAR_n = \conv(\EVEN_n). \]

\paragraph{Symmetry group} Note that the parity polytope is the orbitope $\conv(N_{\parity} \cdot x_0)$ where $x_0 = \mathbf{1} \in \RR^n$ is the vector of all-ones and $N_{\parity} \subset GL(\RR^n)$ is the subgroup of $GL(\RR^n)$ consisting of
diagonal matrices with $+1$ or $-1$ on the diagonal and with an even number of
$-1$'s, i.e.:
\[ N_{\parity} = \{\diag(x) \; : \; x \in \EVEN_n \}. \]
The parity polytope is invariant under the action of
$N_{\parity}$, i.e., it is invariant under switching an even number of signs.
In addition to $N_{\parity}$-invariance, the parity polytope is also invariant
under permutation of coordinates. Let $G_{\parity}$ be the group that
consists of \emph{evenly-signed permutation matrices}, i.e., signed permutation
matrices with an even number of $-1$'s; then one can check that $\PAR_n$ is
invariant under the action of $G_{\parity}$. Note that the group $G_{\parity}$ consists of all possible products of the form $\epsilon h$
where $\epsilon \in N_{\parity}$ and $h \in \fS_n$:
\[ G_{\parity} := \{ \epsilon h \; : \; \epsilon \in N_{\parity}, h \in \fS_n \} = N_{\parity} \rtimes \fS_n. \]
The group $G_{\parity}$ has the product structure described in Section
\ref{sec:regular-orbitopes}. In fact we have $G_{\parity} = N_{\parity} \rtimes \fS_n$ and $\fS_n$ corresponds to the stabilizer of $x_0 = \mathbf{1} \in \RR^n$. 

\paragraph{Facet description of the parity polytope} When $n > 2$, the parity
polytope is a full-dimensional polytope in $\RR^n$. It has the following
description using linear inequalities:
\begin{equation}
    \label{eq:ppfacet} \PAR_n =
\Bigl\{ x \in \RR^n \; : \;  -1\leq x \leq 1,\;
                       \displaystyle\sum_{i \in A^c} x_i - \displaystyle\sum_{i \in A} x_i \leq n-2 \quad \forall A \subseteq [n], |A| \text{ odd}  \Bigr\}.
\end{equation}
If $n\geq 4$ each of the $2n+2^{n-1}$ inequalities are facet-defining. If $n=3$
the inequalities $-1\leq x \leq 1$ are redundant giving the simpler description
with $4$ facets
\begin{equation}
    \label{eq:ppfacet3}
    \PAR_3 = \{x\in \RR^n \; :\;
\begin{array}[t]{rl}
 x_1+x_2-x_3 & \leq 1\\
 x_1-x_2+x_3 & \leq 1 \\
-x_1+x_2+x_3 & \leq 1\\
-x_1-x_2-x_3 & \leq 1\}.
\end{array}
\end{equation}

\paragraph{Nonequivariant polynomial-size lifts of the parity polytope}
Polynomial-size lifts of the parity polytope have been known since the original
paper of Yannakakis \cite{yannakakis1991expressing}. In fact there are two
known LP lifts of the parity polytope of size respectively $O(n^2)$ and $O(n)$.
The two lifts respect some of the symmetry of the parity polytope however none
of them is equivariant with respect to the \emph{full} symmetry group
$G_{\parity} = N_{\parity} \rtimes \fS_n$.

\begin{itemize}
\item The lift of size $O(n^2)$ given by Yannakakis
\cite{yannakakis1991expressing} relies on the observation that
\[ \PAR_n = \conv\left( \bigcup_{\substack{k \text{ even}}} S_k \right) \]
where $S_k$ are the ``slices'' of the hypercube defined by the equation $\1^T x
= n-2k$:
\[ 
\begin{aligned}
S_k &= \conv\{x \in \{-1,1\}^n \; : \; x \text{ has exactly $k$ components equal to $-1$}\}\\
&= \{ x \in [-1,1]^n \; : \; \1^T x = n-2k \}.
\end{aligned}
 \]
Since each $S_k$ has a simple description using only $O(n)$ linear
inequalities, this can be used to construct a lift of $\PAR_n$ of size $O(n^2)$.
One can easily verify that this lift is equivariant with respect to permutation
of the coordinates. One can show however that this lift is \emph{not}
equivariant with respect to switching an even number of signs.
 Intuitively, the reason behind this is that
the operation of switching signs does not preserve the slices $S_k$.

\item There is a smaller yet less well known LP lift of the parity polytope due
to \cite[Section 2.6.3]{carr2004polyhedral} (see also
\cite{kaibel2011constructing}) which has size $O(n)$. 
This lift is equivariant with respect to switching an even number of signs,
however it  is \emph{not} equivariant with respect to the permutation action.
The key observation behind this LP lift is that $(x_1,x_2,\ldots,x_n)\in
\EVEN_n$ if and 
only if there exists $z\in \{-1,1\}$ such that $(x_1,x_2,z)\in \EVEN_3$ and
$(z,x_3,\ldots,x_n)\in \EVEN_{n-1}$ (simply take $z=x_1x_2$). In fact one can
establish an analog of this statement for the parity polytope:
    \begin{equation}
        \label{eq:ppdp}
\begin{aligned}
     \PAR_n = \{x\in \RR^{n}: \exists z \;\; \text{s.t.} \;\; & (x_1,x_2,z)\in \PAR_3\\
&\text{and} \;\; (z,x_3,\ldots,x_n)\in \PAR_{n-1}\}.
\end{aligned}
\end{equation}
Repeatedly applying \eqref{eq:ppdp} shows that 
\[
\begin{aligned}
 \PAR_n &= \Bigl\{
x\in \RR^n: \exists z_2,z_3,\ldots,z_{n-2}\;\;\text{s.t.}\;\; \\
& \qquad\qquad\qquad (x_1,x_2,z_2)\in \PAR_3,\; (z_{n-2},x_{n-1},x_n)\in \PAR_3,\nonumber\\
    &\qquad\qquad\qquad\text{and}\;\;(z_i,x_{i+1},z_{i+1})\in \PAR_3 \quad\text{for $i\in \{2,3,\ldots,n-3\}$}\Bigr\}.\label{eq:dplift}
\end{aligned}
\]
This description shows that $\PAR_n$ is the projection of a polytope with
$4(n-2)$ facets. It is not too hard to show that this lift is actually
equivariant with respect to switching an even number of signs. However one can
see that this lift is not equivariant with respect to permutations since we
have broken permutation symmetry by imposing a particular ordering on the
variables.
\end{itemize}

\subsection{Functions on $\EVEN_n$}
Let $n\geq 1$. If $I\subseteq [n]$ define the monomial map $\RR^n\ni x\mapsto x^I:= \prod_{i\in I}x_i$. 
We can regard these as functions on $\EVEN_n$ by simply restricting their domain. When we do  
so, we write them as $e_I:\EVEN_n\rightarrow \RR$ defined by $e_I(x) = x^I$. We now describe 
a basis for $\cF(\EVEN_n,\RR)$ in terms of these functions.

\begin{proposition} 
\label{prop:evenbasis}
Let $n \geq 1$.  
\begin{itemize}
\item If $n$ is odd, then the functions $e_I$ for $|I| < n/2$ form a basis of $\cF(\EVEN_n,\RR)$.
\item If $n$ is even, then the functions $e_I$ with $|I| < n/2$ together with the functions $(e_I+e_{I^c})/2$ for $|I| = n/2$ constitute a basis of $\cF(\EVEN_n,\RR)$.
\end{itemize}
\end{proposition}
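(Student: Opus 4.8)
The plan is to compare the size of the proposed family of functions with $\dim \cF(\EVEN_n,\RR)$, prove that the family spans, and then invoke the fact that a spanning set whose cardinality equals the dimension is automatically a basis. The first ingredient is the count $|\EVEN_n| = 2^{n-1}$: exactly half of the $2^n$ sign vectors in $\{-1,1\}^n$ have an even number of $-1$'s. Hence $\dim \cF(\EVEN_n,\RR) = 2^{n-1}$, and it suffices to exhibit $2^{n-1}$ functions in the stated list that span.

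The key identity is
\[ e_I = e_{I^c} \quad \text{on } \EVEN_n, \qquad \text{for every } I \subseteq [n]. \]
Indeed, for $x \in \EVEN_n$ we have $e_I(x)\,e_{I^c}(x) = \prod_{i=1}^n x_i = 1$, and since $e_I(x) \in \{\pm 1\}$ this forces $e_I(x) = e_{I^c}(x)$. The same computation shows that $e_K$ is the constant $1$ on $\EVEN_n$ only when $K \in \{\emptyset, [n]\}$ (if $\emptyset \neq K \neq [n]$, flip the signs of one coordinate in $K$ and one outside $K$, staying in $\EVEN_n$ while changing $e_K$ by a sign); this is what I would use to verify that the listed functions are pairwise distinct.

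For the spanning step, recall that the $2^n$ monomials $x\mapsto x^I$ form a basis of $\cF(\{-1,1\}^n,\RR)$ — they are orthogonal for the uniform measure, since the average of $x^{I\triangle J}$ over $\{-1,1\}^n$ vanishes unless $I = J$ — so their restrictions $e_I$ span $\cF(\EVEN_n,\RR)$. By the identity above, every $e_I$ with $|I|>n/2$ equals $e_{I^c}$ with $|I^c|<n/2$, so already $\{e_I : |I|\le n/2\}$ spans. If $n$ is odd there are no subsets of size $n/2$, and by the symmetry $\binom nk = \binom n{n-k}$ there are $\tfrac12\sum_{k=0}^n\binom nk = 2^{n-1}$ subsets with $|I|<n/2$; a spanning set of that cardinality in a $2^{n-1}$-dimensional space is a basis, which is the first bullet. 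If $n$ is even, then for $|I|=n/2$ the identity gives $e_I = \tfrac12(e_I+e_{I^c})$ on $\EVEN_n$, so replacing each such $e_I$ by $\tfrac12(e_I+e_{I^c})$ does not change the span; these symmetrizations come in $\tfrac12\binom n{n/2}$ coinciding complementary pairs, so the resulting list has $\sum_{k<n/2}\binom nk + \tfrac12\binom n{n/2} = \tfrac12\bigl(2^n-\binom n{n/2}\bigr) + \tfrac12\binom n{n/2} = 2^{n-1}$ distinct elements and spans, hence is a basis.

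There is no genuine obstacle here; the argument is bookkeeping with binomial coefficients. The only point needing care is the distinctness count — that different sets $I$ with $|I|<n/2$ give different functions $e_I$ on $\EVEN_n$, and that the functions $\tfrac12(e_I+e_{I^c})$ for $|I|=n/2$ are counted once per pair $\{I,I^c\}$ — since it is exactly the equality with $\dim\cF(\EVEN_n,\RR)=2^{n-1}$ that upgrades ``spanning'' to ``basis''; both follow from the observation that $e_K\equiv 1$ on $\EVEN_n$ only for $K\in\{\emptyset,[n]\}$.
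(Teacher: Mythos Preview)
Your proof is correct and follows essentially the same strategy as the paper: both show that the proposed family spans $\cF(\EVEN_n,\RR)$, count that it has $2^{n-1}$ elements, and conclude it is a basis. The only cosmetic difference is in the spanning step---the paper expands the indicator functions $1_a(x)=2^{-n}\prod_i(1+a_ix_i)$ directly on $\EVEN_n$ and reduces via $x_i^2=1$ and $x^I=x^{I^c}$, whereas you restrict the monomial basis of $\cF(\{-1,1\}^n,\RR)$ and apply the same identity; you are also slightly more explicit than the paper about why the listed functions are pairwise distinct.
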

\begin{proof}
Given $a \in \EVEN_n$, let $1_a\in\cF(\EVEN_n,\RR)$ be the indicator function for
$a$, i.e., $1_a(x) = 1$ if $x=a$ and $1_a(x) = 0$ otherwise.  Clearly the
family of functions $\{1_a\}_{a \in \EVEN_n}$ forms a basis of $\cF(\EVEN_n,\RR)$.
Observe that $1_a$ can be written as the following polynomial:
\[ 1_a(x) = \frac{1}{2^n} (1+a_1 x_1)\dots (1+a_nx_n) \quad \forall x \in \EVEN_n. \]
Furthermore, the following identities are true on $\EVEN_n$: $x_i^2 = 1$ and $x^I = x^{I^c}$
for any $x \in \EVEN_n$. If we expand the polynomial expression for $1_a$ above
using the previous identities we see that, when $n$ is odd, $1_a$ is a linear
combination of the square-free monomials $x^I$ for $|I| < n/2$. When $n$ is
even any monomial of the form $ x^{I}$ where $|I|=n/2$ can be rewritten as 
$(x^{I} + x^{I^c}) / 2$.  Thus this shows that the functions given
in the statement of the proposition form a generating set for $\cF(\EVEN_n,\RR)$.
Since the number of such functions is $2^{n-1} = \dim \cF(\EVEN_n,\RR)$, they form
a basis of $\cF(\EVEN_n,\RR)$.
\if\siam1 \qquad \fi
\end{proof}

Given $0\leq k < n/2$, denote by $\Pol_k(\EVEN_n)$ the subspace of
$\cF(\EVEN_n,\RR)$ spanned by the $e_I$ with $|I|=k$. If $k=n/2$ let $\Pol_k(\EVEN_n)$ be the subspace of $\cF(\EVEN_n,\RR)$ spanned by 
the $(e_I+e_{I^c})/2$ with $|I|=n/2$.
So we have:
\[
\dim \Pol_k(\EVEN_n) = \begin{cases} \binom{n}{k} & \text{ if } k < n/2\\ \frac{1}{2} \binom{n}{n/2} & \text{ if } k = n/2. \end{cases}
\]
The proposition above shows that the space $\cF(\EVEN_n,\RR)$ decomposes as:
\[ \cF(\EVEN_n,\RR) = \Pol_0(\EVEN_n) \oplus \dots \oplus \Pol_{\lfloor n/2\rfloor}(\EVEN_n). \]

The next lemma expresses the pointwise product of the functions $e_I$ and $e_J$ in terms of our basis 
for $\cF(\EVEN_n,\RR)$. (Here, and subsequently,
 if $I,J\subseteq[n]$ then $I\triangle J$ is the symmetric difference of $I$ and $J$.)
\begin{lemma}
	If $I,J\subseteq [n]$ then 
	\[ e_Ie_J = \begin{cases} e_{I\triangle J} & \text{if $|I\triangle J|< n/2$}\\
					e_{(I\triangle J)^c} & \text{if $|I\triangle J| > n/2$}\\
					(e_{I\triangle J} + e_{(I\triangle J)^c})/2 & \text{if $|I\triangle J| = n/2$}.
	\end{cases}\]
\end{lemma}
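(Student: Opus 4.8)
The plan is to compute $e_I e_J$ pointwise on $\EVEN_n$ and then rewrite the result in terms of the basis of $\cF(\EVEN_n,\RR)$ described in Proposition \ref{prop:evenbasis}. The starting point is the pointwise identity: for any $x \in \EVEN_n$ and any $I, J \subseteq [n]$,
\[
(e_I e_J)(x) = x^I x^J = \prod_{i \in I} x_i \prod_{j \in J} x_j = \prod_{i \in I \cap J} x_i^2 \prod_{i \in I \triangle J} x_i.
\]
Since $x_i^2 = 1$ for all $i$ when $x \in \{-1,1\}^n$, the first product is $1$, so $(e_I e_J)(x) = x^{I \triangle J} = e_{I \triangle J}(x)$ for all $x \in \EVEN_n$. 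In other words, as functions on $\EVEN_n$ we have $e_I e_J = e_{I \triangle J}$.

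The remaining work is purely bookkeeping: the function $e_{I \triangle J}$ must be expressed in the chosen basis. Set $K = I \triangle J$. If $|K| < n/2$, then $e_K$ is already a basis element and we are done. If $|K| > n/2$, then $|K^c| < n/2$, and since $x^K = x^{K^c}$ on $\EVEN_n$ (using $\prod_{i=1}^n x_i = 1$), we have $e_K = e_{K^c}$, which is the basis element indexed by $K^c$; this gives the second case. Finally, if $|K| = n/2$ (which can only occur when $n$ is even), then again $e_K = e_{K^c}$ on $\EVEN_n$, so $e_K = (e_K + e_{K^c})/2$, the basis element associated to the pair $\{K, K^c\}$; this gives the third case. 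Combining the three cases yields exactly the stated formula.

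I do not anticipate any real obstacle here: the only facts used are the relations $x_i^2 = 1$ and $x^I = x^{I^c}$ on $\EVEN_n$, both of which were already established in the proof of Proposition \ref{prop:evenbasis}, together with the explicit description of the basis given there. The mild subtlety worth stating carefully is just that the definition of $\Pol_{n/2}(\EVEN_n)$ uses the symmetrized functions $(e_I + e_{I^c})/2$, so in the balanced case one writes $e_K = (e_K + e_{K^c})/2$ rather than simply $e_K$, even though these are equal as functions; this is what makes the third branch of the formula look different from the first two.
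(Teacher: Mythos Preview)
Your proposal is correct and follows essentially the same approach as the paper: both arguments use the identities $x_i^2=1$ and $x^K=x^{K^c}$ on $\EVEN_n$ to reduce $e_Ie_J$ to $e_{I\triangle J}$ and then rewrite it in the basis of Proposition~\ref{prop:evenbasis}. Your version is slightly more explicit in separating the three cases, but the content is the same.
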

\begin{proof}
	For any $x\in \EVEN_n$ we have that $x_i^2 = 1$ for all $i\in [n]$ and $\prod_{i\in [n]}x_i = 1$. Hence for any $I,J\subseteq [n]$, 
	$x^Ix^J = x^{I \triangle J} = x^{(I\triangle J)^c} = (x^{I\triangle J}+x^{(I\triangle J)^c})/2$. The result then follows by recognizing
	that at least one of these can be written in terms of the given basis from Proposition~\ref{prop:evenbasis}.
\if\siam1 \qquad \fi
\end{proof}

\subsection{A lower bound on the theta-rank of the parity polytope}

For any $1\leq k \leq n/2$ denote by $\Pol_{\leq k}(\EVEN_n)$ the subspace of $\cF(\EVEN_n,\RR)$ defined by 
\[ \Pol_{\leq k}(\EVEN_n) = \Pol_0(\EVEN_n) \oplus \dots \oplus \Pol_{k}(\EVEN_n).\]
Observe that the theta-rank (cf. Definition~\ref{def:thetarank}) of the parity polytope is the smallest $k$ such that the following holds:
\[
\forall \ell \in (\RR^n)^*, \quad \ell_{\max} - \ell \text{ is $\Pol_{\leq k}(\EVEN_n)$-sos on $\EVEN_n$},
\]
where for a linear form $\ell \in (\RR^n)^*$, we call $\ell_{\max} := \max_{x \in \EVEN_n} \ell(x)$.

Note that if $|I|,|J|<n/4$ then the product of $e_I(x) = x^I$ and $e_J(x) = x^J$ when thought of as functions on $\EVEN_n$ is the same 
as their product when thought of as functions on $\{-1,1\}^n$. 
This is the basic reason why the following lower bound on the theta-rank of the parity polytope holds.
\begin{proposition}
\label{prop:parity-thetarank-lb}
The theta-rank of the parity polytope $\PAR_n$ is at least $\lceil n/4 \rceil$.
\end{proposition}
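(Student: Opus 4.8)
The plan is to argue by contradiction. Suppose $\PAR_n$ has theta-rank $k \le \lceil n/4 \rceil - 1$; note that $\lceil n/4\rceil - 1 < n/4$ always, so this forces $2k < n/2$, which is the inequality on which everything hinges. I would fix the odd-cut facet of $\PAR_n$ given by the linear form $\ell(x) = x_2 + \dots + x_n - x_1$ (corresponding to $A = \{1\}$ in \eqref{eq:ppfacet}). A short check shows $\ell_{\max} := \max_{x \in \EVEN_n} \ell(x) = n-2$, even though $\ell$ attains the value $n$ on the full cube at the \emph{odd} point $x^\star = (-1,1,\dots,1) \notin \EVEN_n$. By Definition \ref{def:thetarank} and the assumption on the theta-rank, there are functions $f_1,\dots,f_J \in \Pol_{\le k}(\EVEN_n)$ with $\ell_{\max} - \ell = \sum_j f_j^2$ on $\EVEN_n$; write $f_j = \sum_{|I| \le k} c_{j,I}\, e_I$.

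The heart of the argument is to lift this sum-of-squares certificate from $\EVEN_n$ to all of $\{-1,1\}^n$. For each $j$ let $p_j \in \RR[x_1,\dots,x_n]$ be the multilinear polynomial $p_j(x) = \sum_{|I| \le k} c_{j,I}\, x^I$. Reducing modulo $x_i^2 = 1$, the square $p_j^2$ becomes the multilinear polynomial $\sum_{I,J} c_{j,I} c_{j,J}\, x^{I \triangle J}$, of degree $\le 2k$. On the other hand, since $|I \triangle J| \le |I| + |J| \le 2k < n/2$ for every pair occurring, the product rule for the $e_I$'s established in the lemma above gives $e_I e_J = e_{I \triangle J}$ on $\EVEN_n$; hence the restriction of $p_j^2$ to $\EVEN_n$ is exactly $f_j^2$. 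Summing, the two multilinear polynomials $\sum_j p_j^2$ and $\ell_{\max} - \ell$ — of degrees $\le 2k$ and $\le 1$ respectively, both $< n/2$ — agree on $\EVEN_n$. But by Proposition \ref{prop:evenbasis} the functions $e_I$ with $|I| < n/2$ are linearly independent on $\EVEN_n$, so the restriction map from multilinear polynomials of degree $< n/2$ to $\cF(\EVEN_n,\RR)$ is injective; therefore $\sum_j p_j^2 = \ell_{\max} - \ell$ as polynomials, i.e. as functions on all of $\{-1,1\}^n$.

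Evaluating this identity at the odd point $x^\star$ then gives $0 \le \sum_j p_j(x^\star)^2 = \ell_{\max} - \ell(x^\star) = (n-2) - n = -2$, a contradiction; hence $\PAR_n$ has theta-rank at least $\lceil n/4 \rceil$ (the cases $n \le 4$, where $\lceil n/4 \rceil = 1$, being handled trivially, since $\PAR_n$ is a bounded full-dimensional polytope for $3 \le n \le 4$ and so cannot equal $\TH_0$). The conceptual key is the lift of the sos certificate to the full hypercube; the subtle point requiring care — and the one that could go wrong if one is careless — is the degree bookkeeping, namely verifying that $k \le \lceil n/4 \rceil - 1$ genuinely yields $2k < n/2$, so that every product $e_I e_J$ appearing in $\sum_j f_j^2$ and the restriction map itself behave exactly as on $\{-1,1\}^n$. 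This is precisely the coincidence flagged in the remark preceding the proposition, and it is what makes the whole argument work.
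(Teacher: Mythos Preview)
Your proof is correct and follows essentially the same approach as the paper. Both arguments hinge on the observation that for $|I|,|J|<n/4$ the product $e_I e_J$ behaves identically on $\EVEN_n$ and on $\{-1,1\}^n$, and both reach a contradiction by evaluating at the odd point $(-1,1,\dots,1)$ (the paper uses $(1,\dots,1,-1)$). The only cosmetic difference is that the paper packages ``evaluation at the odd point'' as a linear functional $\mathcal L_p$ on $\cF(\EVEN_n,\RR)$ and verifies $\mathcal L_p(f^2)=\mathcal L_p(f)^2\ge 0$, whereas you phrase the same computation as lifting the sos certificate to a multilinear polynomial on $\{-1,1\}^n$ and then evaluating; unwinding either version yields exactly the other.
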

\begin{proof}
We first sketch the outline of the proof before filling in the details. We
choose a function $\ell_{\max} - \ell\in \Pol_{\leq 1}(\EVEN_n)$ that is
nonnegative on $\EVEN_n$ but when viewed as a polynomial on $\RR^n$ takes a
negative value on some point $p\in \{-1,1\}^n\setminus \EVEN_n$. (One can think
of this as a facet inequality for $\PAR_n$ that is not valid for the
hypercube.) We use this point $p$ to construct a linear functional $\mathcal L_p\in
\cF(\EVEN_n,\RR)^*$ (defined to mimic evaulation of the function at $p$) such
that $\mathcal L_p(\ell_{\max}-\ell) < 0$ and yet whenever $k<n/4$ and $f\in
\Pol_{\leq k}(\EVEN_n)$ we have that $\mathcal L_p(f^2)\geq 0$. This would imply that if $k<n/4$ then  $\mathcal L_p$ 
separates the linear function $\ell_{\max}-\ell$ (that is nonnegative on $\EVEN_n$) from the 
cone of functions that are $\Pol_{\leq k}(\EVEN_n)$-sos on $\EVEN_n$, which would complete the proof. 

We now fill in the details. Let $\ell_{\max} - \ell(x) = (n-2)e_{\emptyset}(x) + e_{\{n\}}(x) - \sum_{i=1}^{n-1}e_{\{i\}}(x)$ and observe that this 
is a nonnegative function on $\EVEN_n$ (since it defines a facet of $\PAR_n$).
Let $p = (1,1,\ldots,1,-1)\in \RR^n$, and note that $p$ has an \emph{odd} number of $-1$s. Define a linear functional $\mathcal L_p\in \cF(\EVEN_n,\RR)^*$ by 
defining it on our basis for $\cF(\EVEN_n,\RR)$ by  
\begin{align*}
	\mathcal L_p(e_I) & = \prod_{i\in I}p_i \;\;\text{if $|I|<n/2$}\;\;\text{and}\\
				\mathcal L_p((e_I+e_{I^c})/2) & = \left(\prod_{i\in I}p_i +\prod_{i\in I^c}p_i\right)/2 = 0 \;\; \text{if $|I|=n/2$}.	
\end{align*}
Observe that $\mathcal L_p(\ell_{\max} - \ell) = (n-2) + \mathcal L_p(e_{\{n\}}) - \sum_{i=1}^{n-1}\mathcal L_p(e_{\{i\}}) = (n-2) - 1 - (n-1) = -2$. 

We now show that if $k<n/4$ and $f\in \Pol_{\leq k}(\EVEN_n)$ then $\mathcal L_p(f^2) \geq 0$. 
Observe that if $|I\triangle J| < n/2$ and $|I|,|J|<n/2$ then 
\begin{equation}
\label{eq:hom}
 \mathcal L_p(e_Ie_J) = \mathcal L_p(e_{I\triangle J}) = \prod_{i\in I\triangle J}p_i = 
	\left(\textstyle{\prod_{i\in I}p_i}\right)\left(\textstyle{\prod_{j\in J}p_j}\right) = \mathcal L_p(e_I)\mathcal L_p(e_J).
\end{equation}
If $f\in \Pol_{\leq k}(\EVEN_n)$ where $k<n/4$ then there are constants $c_I\in \RR$ such that $f = \sum_{m=0}^{k}\sum_{|I|=k}c_Ie_I$.
Hence by~\eqref{eq:hom}, and the observation that if  $|I|,|J|<n/4$ then $|I\triangle J| < n/2$,
we can see that any $f\in \Pol_{\leq k}(\EVEN_n)$ satisfies
\begin{equation}
\begin{aligned}
	\label{eq:ellsq} \mathcal L_p(f^2) &= 
	\sum_{m,m'=0}^{k} \sum_{|I|=m}\sum_{|J|=m'}c_Ic_J\mathcal L_p(e_Ie_J)\\
&= 
	\sum_{m,m'=0}^{k}\sum_{|I|=m}\sum_{|J|=m'}c_Ic_J\mathcal L_p(e_I)\ell_p(e_J) = \mathcal L_p(f)^2\geq 0.
\end{aligned}
\end{equation}
This completes the proof.
%
\if\siam1 \qquad \fi
\end{proof}

\begin{remark}
One can show that the lower bound of $\lceil n/4 \rceil$ on the theta-rank of the parity polytope $\PAR_n$ is in fact tight, cf. Proposition 5 in \cite{polygonsequivariant}.
\end{remark}

\subsection{Equivariant psd lifts of the parity polytope}

In this section we study psd lifts of the parity polytope that are equivariant
under the full symmetry group $G_{\parity}$ of evenly signed permutations. Given two integers $n$ and
$k\leq n/2$ we define:
\begin{equation}
 \label{eq:Dnk}
 D_{n,k} := \begin{cases} \binom{n}{k} & \text{ if $n$ is odd}\\ 
	\min\left(\binom{n}{k},\frac{1}{2} \binom{n}{n/2}\right) & \text{ if $n$ is even.} \end{cases}
\end{equation}
We prove the following theorem:
\begin{theorem}
\label{thm:lb-sym-parity}
Assume $\PAR_n$ has a $\S^d_+$-lift that is $G_{\parity}$-equivariant of
size $d < D_{n,k}$ where $k \leq n/2$. Then the $(k-1)$'st theta-body
relaxation is exact, i.e., $\TH_{k-1}(\EVEN_n) = \PAR_n$.
\end{theorem}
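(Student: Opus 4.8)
The plan is to combine the regular‑orbitope structure theorem (Theorem~\ref{thm:main-structure-product}) with a decomposition of $\cF(\EVEN_n,\RR)$ into irreducible $G_{\parity}$‑modules. \emph{First}, recall that $\PAR_n=\conv(N_{\parity}\cdot x_0)$ with $x_0$ the all‑ones vector and $G_{\parity}=N_{\parity}\rtimes\fS_n$, where $\fS_n$ is the stabilizer of $x_0$; thus $\EVEN_n$ is an $N_{\parity}$‑regular orbitope and Theorem~\ref{thm:main-structure-product} applies. From a $G_{\parity}$‑equivariant psd lift of size $d$ it produces a $G_{\parity}$‑invariant subspace $V\subseteq\cF(\EVEN_n,\RR)$ such that $\ell_{\max}-\ell$ is $V$‑sos on $\EVEN_n$ for every linear form $\ell$, and $\dim V\leq\alpha_{N_{\parity}}(d)\cdot d$. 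Since $N_{\parity}\cong\ZZ_2^{\,n-1}$ is abelian, all of its real irreducible representations are one‑dimensional, so $\alpha_{N_{\parity}}(d)=1$ and hence $\dim V\leq d<D_{n,k}$.

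\emph{Next}, I would pin down the irreducible constituents of $\cF(\EVEN_n,\RR)$ as a $G_{\parity}$‑module. By Proposition~\ref{prop:evenbasis} we have the $G_{\parity}$‑invariant decomposition $\cF(\EVEN_n,\RR)=\bigoplus_{j=0}^{\lfloor n/2\rfloor}\Pol_j(\EVEN_n)$, and the key lemma (which I would state just before the theorem) is that each summand $\Pol_j(\EVEN_n)$ is an \emph{irreducible} $G_{\parity}$‑representation. To prove it, note that $\Pol_j(\EVEN_n)$ has basis $\{e_I:|I|=j\}$ for $j<n/2$ (and $\{(e_I+e_{I^c})/2:|I|=n/2\}$ in the remaining case), that $N_{\parity}$ acts on $e_I$ by the linear character $\epsilon\mapsto\prod_{i\in I}\epsilon_i$, and that for $j<n/2$ these characters are pairwise distinct; hence the lines $\RR e_I$ are precisely the $N_{\parity}$‑isotypic components of $\Pol_j(\EVEN_n)$, and $\fS_n$ permutes them transitively, so irreducibility follows (either via Mackey's criterion, or by identifying $\Pol_j(\EVEN_n)$ with the degree‑$j$ piece $P_j\subseteq\cF(\{-1,1\}^n,\RR)$, which is $G_{\cube}$‑irreducible and satisfies $P_j\not\isom P_j\otimes\sgn$ for $j\neq n/2$, so that it stays irreducible on restriction to the index‑two subgroup $G_{\parity}$; for $n$ even and $j=n/2$ the module $P_{n/2}$ splits into two halves and $\Pol_{n/2}(\EVEN_n)$ is one of them). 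This lemma — equivalently, the statement that low‑dimensional $G_{\parity}$‑invariant subspaces of $\cF(\EVEN_n,\RR)$ consist of low‑degree polynomials — is the real content; everything else is bookkeeping.

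\emph{Finally}, a short binomial‑coefficient computation shows $D_{n,k}=\min_{k\leq j\leq\lfloor n/2\rfloor}\dim\Pol_j(\EVEN_n)$: one has $\dim\Pol_j(\EVEN_n)=\binom nj$ for $j<n/2$ and $\tfrac12\binom n{n/2}$ for $j=n/2$, and $\binom nj$ is nondecreasing for $j\leq\lfloor n/2\rfloor$ while $\tfrac12\binom n{n/2}<\binom n{n/2-1}$ when $n$ is even. Now if $V\not\subseteq\Pol_{\leq k-1}(\EVEN_n)$, then the $G_{\parity}$‑equivariant projection of $\cF(\EVEN_n,\RR)$ onto $\Pol_j(\EVEN_n)$ (along the decomposition above) is nonzero on $V$ for some $j$ with $k\leq j\leq\lfloor n/2\rfloor$; its image is a nonzero $G_{\parity}$‑invariant subspace of the irreducible module $\Pol_j(\EVEN_n)$, hence all of it, so $\dim V\geq\dim\Pol_j(\EVEN_n)\geq D_{n,k}$, contradicting $\dim V<D_{n,k}$ (this step can also be phrased as an application of Proposition~\ref{prop:lowdiminvsubspace}). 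Therefore $V\subseteq\Pol_{\leq k-1}(\EVEN_n)$, and writing any $\ell_{\max}-\ell=\sum_m f_m^2$ with $f_m\in V$ exhibits $\ell_{\max}-\ell$ as $\Pol_{\leq k-1}(\EVEN_n)$‑sos on $\EVEN_n$ for every linear form $\ell$; by Theorem~\ref{thm:intro-soslifts} (together with the always‑valid inclusion $\conv(X)\subseteq\TH_V(X)$) this gives $\TH_{k-1}(\EVEN_n)=\PAR_n$. The main obstacle is the irreducibility lemma of the second paragraph; the one subtlety to watch within it is the degree‑$n/2$ case for $n$ even, where $\Pol_{n/2}(\EVEN_n)$ has dimension only $\tfrac12\binom n{n/2}$ — this is exactly why $D_{n,k}$ is defined with a minimum against $\tfrac12\binom n{n/2}$.
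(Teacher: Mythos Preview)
Your proposal is correct and follows the same architecture as the paper: apply Theorem~\ref{thm:main-structure-product} (using that $N_{\parity}\cong\ZZ_2^{n-1}$ is abelian to get $\dim V\le d<D_{n,k}$), invoke the irreducibility of each $\Pol_j(\EVEN_n)$ (the paper's Lemma~\ref{lem:parity-invariantsubspaces}) to force $V\subseteq\Pol_{\leq k-1}(\EVEN_n)$, and conclude. The only real difference is in how you establish irreducibility: the paper gives a hands-on argument, applying products of operators $(\id+\epsilon_{k,\ell})$ to strip an arbitrary nonzero element of $\Pol_j(\EVEN_n)$ down to a single monomial, whereas you observe that the lines $\RR e_I$ are the pairwise-distinct $N_{\parity}$-isotypic components and that $\fS_n$ permutes them transitively --- both arguments are valid, and yours is more conceptual while the paper's is more self-contained.
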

\begin{proof}
Assume we have a $G_{\parity}$-equivariant psd lift of $\PAR_n$ of size $d$. 
We can apply Theorem \ref{thm:main-structure-product} with 
$P = \PAR_n$ and $G = G_{\parity} = N_{\parity}\rtimes \fS_n$. Since 
$N_{\parity} \isom \ZZ_2^{n-1}$, all the real irreducible representations of 
$N_{\parity}$ are one-dimensional. Thus Theorem \ref{thm:main-structure-product} says 
that there exists a $G_{\parity}$-invariant subspace $V$ of $\cF(\EVEN_n,\RR)$ with
 $\dim V \leq d$ such that for any linear form $\ell \in (\RR^n)^*$ we have that
\begin{equation}
 \label{eq:par-sos-certificate}
 \text{$\ell_{\max}-\ell$ is $V$-sos on $\EVEN_n$.}
\end{equation}
In Lemma \ref{lem:parity-invariantsubspaces} (cf.~below) we show that such an
invariant subspace, when $d < D_{n,k}$, is composed entirely of polynomials of
degree at most $k-1$, i.e.\ $V$ is a subspace of $\Pol_{\leq k-1}(\EVEN_n)$. 
Thus this shows that $\TH_{k-1}(\EVEN_n) = \PAR_n$.
\if\siam1 \qquad \fi
\end{proof}

Note that, using Remark \ref{rem:approx} from the previous section, one can
actually state a more general theorem relating \emph{approximate} equivariant psd
lifts and the sum-of-squares hierarchy. Indeed one can prove the following:
\begin{theorem}
\label{thm:approx-parity-sos}
Assume $\widehat{P} \subseteq \RR^n$ is an outer-approximation of $\PAR_n$
(i.e., $\PAR_n \subseteq \widehat{P}$) and assume $\widehat{P}$ has a
$G_{\parity}$-equivariant psd lift of size $d$. If $d < D_{n,k}$ for some
$k \leq n/2$ then necessarily 
\[ \PAR_n \subseteq \TH_{k-1}(\EVEN_n) \subseteq \widehat{P} \] where $\TH_{k-1}(\EVEN_n)$ is the $(k-1)$'st Lasserre/theta-body relaxation for the parity polytope.
\end{theorem}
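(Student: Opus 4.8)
The plan is to mimic the proof of Theorem~\ref{thm:lb-sym-parity} almost verbatim, except that the facet inequalities of $\PAR_n$ are replaced by the (finitely many) facet inequalities of $\widehat P$, and that in place of the factorization theorem one invokes the more flexible statement of Remark~\ref{rem:approx}. The only genuinely new ingredient is that the facet inequalities of $\widehat P$, restricted to $\EVEN_n$, all admit an \emph{equivariant} psd certificate \emph{against one common map} $A:\EVEN_n\to\S^d_+$. I would produce such an $A$ by hand. Write the given lift as $\widehat P=\pi(\S^d_+\cap L)$ with homomorphism $\rho:G_{\parity}\to GL(\RR^d)$ as in Definition~\ref{def:equivariantpsdlift}. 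Since $x_0=\mathbf 1\in\PAR_n\subseteq\widehat P$, pick $Y_0\in\S^d_+\cap L$ with $\pi(Y_0)=x_0$ and replace it by its average $\frac{1}{|\fS_n|}\sum_{h\in\fS_n}\rho(h)Y_0\rho(h)^T$: this average still lies in $\S^d_+\cap L$ (convexity together with \eqref{eq:Linvariance}), still satisfies $\pi(Y_0)=x_0$ (by \eqref{eq:linequivariance} and $h\cdot x_0=x_0$), and now obeys $\rho(h)Y_0\rho(h)^T=Y_0$ for all $h\in\fS_n$. Since $N_{\parity}$ acts regularly on $\EVEN_n$, every $v\in\EVEN_n$ is $n_v\cdot x_0$ for a unique $n_v\in N_{\parity}$; set $A(v):=\rho(n_v)Y_0\rho(n_v)^T$. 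Then $A(v)\in\S^d_+\cap L$, $\pi(A(v))=v$, and a one-line computation using $G_{\parity}=N_{\parity}\rtimes\fS_n$ (writing $g=nh$ and $hn_v=(hn_vh^{-1})h$) together with the $\fS_n$-invariance of $Y_0$ gives $A(g\cdot v)=\rho(g)A(v)\rho(g)^T$ for all $g\in G_{\parity}$.

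Next, for each facet inequality $\ell(x)\le c$ of $\widehat P$ I would produce $B(\ell)\in\S^d_+$ with $c-\ell(v)=\langle A(v),B(\ell)\rangle$ for all $v\in\EVEN_n$. Since $\pi(\S^d_+\cap L)=\widehat P$ and $\ell(x)\le c$ is a facet, the semidefinite program $\max\{\ell(\pi(Y)):Y\in\S^d_+\cap L\}$ has value $c$; its conic dual then produces $B(\ell)\in\S^d_+$ with $c-\ell(\pi(Y))=\langle B(\ell),Y\rangle$ for all $Y\in L$, and evaluating at $Y=A(v)$ gives the claim. The one delicate point — and the main obstacle — is that this dual certificate need not exist without a Slater-type condition; one handles it exactly as in the proof of Theorem~\ref{thm:factorization-equivlift} (Appendix~\ref{sec:proof_factorization_theorem_equivariant_lifts}) by first reducing to a \emph{proper} lift, i.e.\ replacing $\S^d_+$ by its smallest face containing $\S^d_+\cap L$, which is isomorphic to some $\S^r_+$ with $r\le d$. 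This face is automatically $\rho$-invariant (congruence by $\rho(g)$ stabilises $\S^d_+\cap L$, hence its minimal face), so the reduction preserves $G_{\parity}$-equivariance and does not increase the size; note that this step uses only $\EVEN_n\subseteq\widehat P$ and the equivariance of the lift, never that $\widehat P$ is an orbitope.

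With the equivariant map $A$ and the certificates $B(\ell)$ in hand, I would run the computation in the proof of Theorem~\ref{thm:main-structure-product} (cf.\ Remark~\ref{rem:approx}) with $N=N_{\parity}$: factoring $Y_0=L_0L_0^T$ and $B(\ell)=L_\ell L_\ell^T$ yields $c-\ell(v)=\langle\rho(n_v)Y_0\rho(n_v)^T,B(\ell)\rangle=\|L_\ell^T\rho(n_v)L_0\|_F^2$, exhibiting $c-\ell$ as a sum of squares of functions from the single $G_{\parity}$-invariant subspace $V=\linspan\{\,n\in N_{\parity}\mapsto\rho(n)_{ij}\,\}$ of $\cF(\EVEN_n,\RR)$, which depends on $\rho|_{N_{\parity}}$ only and hence is common to all the facets. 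Since $N_{\parity}\isom\ZZ_2^{n-1}$ is abelian, $\rho|_{N_{\parity}}$ decomposes into one-dimensional irreducibles, so $\dim V\le d<D_{n,k}$, and Lemma~\ref{lem:parity-invariantsubspaces} then forces $V\subseteq\Pol_{\le k-1}(\EVEN_n)$. Thus every facet inequality $\ell(x)\le c$ of $\widehat P$ has $c-\ell$ being $\Pol_{\le k-1}(\EVEN_n)$-sos on $\EVEN_n$; comparing with the definition \eqref{eq:SOSLV} of $\TH_{k-1}(\EVEN_n)=\TH_{\Pol_{\le k-1}(\EVEN_n)}(\EVEN_n)$ shows that every point of $\TH_{k-1}(\EVEN_n)$ satisfies each such inequality, i.e.\ $\TH_{k-1}(\EVEN_n)\subseteq\widehat P$. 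Together with the always-valid inclusion $\PAR_n=\conv(\EVEN_n)\subseteq\TH_{k-1}(\EVEN_n)$, this yields the chain $\PAR_n\subseteq\TH_{k-1}(\EVEN_n)\subseteq\widehat P$.
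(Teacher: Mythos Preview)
Your proposal is correct and follows essentially the same approach as the paper: construct an equivariant map $A:\EVEN_n\to\S^d_+$ via the ``simple generalization of Theorem~\ref{thm:factorization-equivlift}'' that the paper invokes, apply Remark~\ref{rem:approx} to obtain a single $G_{\parity}$-invariant subspace $V$ of dimension $\leq d$, and then use Lemma~\ref{lem:parity-invariantsubspaces}. Your write-up is in fact more explicit than the paper's (which leaves the construction of $A$ and the Slater/face-reduction issue to cross-references); the only cosmetic slip is the parenthetical ``finitely many'' facet inequalities---$\widehat P$ need not be a polytope---but this is harmless since your argument runs for every valid linear inequality and $V$ does not depend on $\ell$.
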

\begin{proof} 
The proof is very similar to the proof of Theorem \ref{thm:lb-sym-parity}
above.  Given a linear form $\ell \in (\RR^n)^*$ let $\ell_{\max}$ and
$\widehat{\ell_{\max}}$ be respectively the maximum of $\ell$ on $\PAR_n$ and
$\widehat{P}$. Note that $\ell_{\max} \leq \widehat{\ell_{\max}}$ since $\PAR_n
\subseteq \widehat{P}$, and hence the linear function $\widehat{\ell_{\max}} -
\ell(x)$ is nonnegative on $\EVEN_n$. Since $\widehat{P}$ has a
$G_{\parity}$-equivariant psd lift of size $d$, and since $\EVEN_n
\subseteq \widehat{P}$ one can show (using a simple generalization of Theorem \ref{thm:factorization-equivlift}) that we have an
equivariant certificate of nonnegativity of $\widehat{\ell_{\max}} - \ell$ on
$\EVEN_n$ of the form:
\[ \widehat{\ell_{\max}} - \ell(x) = \langle A(x), B \rangle \quad \forall x \in \EVEN_n \]
where $A$ satisfies the equivariance relation $A(g\cdot x) = \rho(g) A(x)
\rho(g)^T$ for all $x \in \EVEN_n$, $g \in G_{\parity}$.  Thus by Remark
\ref{rem:approx}, we know that $\widehat{\ell_{\max}} - \ell(x)$ is a
sum-of-squares of functions in a $G_{\parity}$-invariant subspace $V$ of
dimension $\leq d$. Thus using Lemma \ref{lem:parity-invariantsubspaces} below
it holds that $\widehat{\ell_{\max}} - \ell(x)$ is a sum-of-squares of
functions of degree $\leq k-1$ on $\EVEN_n$.

This is true for any facet-defining linear form $\ell$ of $\widehat{P}$ thus,
by the definition of the theta-body relaxation (cf. Equation \eqref{eq:SOSLV} with $V = \Pol_{\leq k-1}(\EVEN_n)$) we
have $\TH_{k-1}(\EVEN_n) \subseteq \widehat{P}$.
\if\siam1 \qquad \fi
\end{proof}

\paragraph{Exponential lower bounds} Before stating Lemma
\ref{lem:parity-invariantsubspaces} on invariant subspaces of $\cF(\EVEN_n,\RR)$
which is crucial for the proofs of the two theorems above, we show how by combining Theorem \ref{thm:lb-sym-parity} and  Proposition \ref{prop:parity-thetarank-lb} we get an exponential lower bound for equivariant psd lifts of the parity polytope. This is Theorem \ref{thm:main-parity-lb} from the introduction which we restate for convenience.

\begin{reptheorem}{thm:main-parity-lb}
Any $G_{\parity}$-equivariant psd lift of $\PAR_n$ for $n\geq 8$ must have
size $\geq \binom{n}{\lceil n/4 \rceil}$.
\end{reptheorem}
\begin{proof}
We apply Theorem \ref{thm:lb-sym-parity} above with $k=\lceil n/4 \rceil$. By Proposition \ref{prop:parity-thetarank-lb}, we
know that the theta body relaxation of order $\lceil n/4 \rceil-1$ is
\emph{not} exact. Thus this means that any $G_{\parity}$-equivariant psd
lift of $\PAR_n$ must have size $d \geq D_{n,\lceil n/4 \rceil}$. One can then
easily check from the definition of $D_{n,k}$ that when $n\geq 8$ we have
$D_{n,\lceil n/4 \rceil} \geq \binom{n}{\lceil n/4 \rceil}$.
\if\siam1 \qquad \fi
\end{proof}

\paragraph{Invariant subspaces of functions on $\EVEN_n$} 

To complete the proof of Theorems \ref{thm:lb-sym-parity} and
\ref{thm:approx-parity-sos}, it remains to show that low-dimensional invariant
subspaces of $\cF(\EVEN_n,\RR)$ are necessarily composed of low-degree polynomials.


We are interested in subspaces of $\cF(\EVEN_n,\RR)$ that are
$G_{\parity}$-invariant. Recall that $G_{\parity}$ is the group of
evenly signed permutations. Thus a subspace $V$ of $\cF(\EVEN_n,\RR)$ is
$G_{\parity}$-invariant if for any $f \in V$, and any $\epsilon \in
\{-1,+1\}^n$ such that $\prod_{i=1}^n \epsilon_i = 1$, and any $\sigma \in
\fS_n$ the function:
\[ x\mapsto f(\epsilon_1 x_{\sigma(1)},\dots,\epsilon_n x_{\sigma(n)}) \]
is also in $V$. Recall that an invariant subspace $V$ is called
\emph{irreducible} if it does not contain any nontrivial invariant subspace,
i.e., if $W$ is an invariant subspace of $V$, then $W = \{0\}$ or $W = V$.

It is clear that the subspaces $\Pol_k(\EVEN_n)$ are $G_{\parity}$-invariant.
The next result shows that these subspaces are actually irreducible.
Recall, for the statement to follow, that that the notation $D_{n,k}$ is defined in~\eqref{eq:Dnk}.

\begin{lemma}
\label{lem:parity-invariantsubspaces}
    Under the action of $G_{\parity}$, $\cF(\EVEN_n,\RR)$ decomposes into irreducible invariant subspaces as
    \[ \cF(\EVEN_n,\RR) = \Pol_0(\EVEN_n) \oplus \dots \oplus \Pol_{\lfloor n/2\rfloor}(\EVEN_n). \]
    Hence if $V$ is a $G_{\parity}$-invariant subspace of $\cF(\EVEN_n,\RR)$
    with $\dim(V) < D_{n,k}$ 
then  $V\subseteq \Pol_{\leq k-1}(\EVEN_n)$.
\end{lemma}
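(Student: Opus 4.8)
The plan is to prove the decomposition of $\cF(\EVEN_n,\RR)$ into the pieces $\Pol_k(\EVEN_n)$ and then show each piece is irreducible; the final conclusion $V\subseteq\Pol_{\leq k-1}(\EVEN_n)$ follows from Proposition~\ref{prop:lowdiminvsubspace} together with a dimension count. The decomposition $\cF(\EVEN_n,\RR)=\bigoplus_{k=0}^{\lfloor n/2\rfloor}\Pol_k(\EVEN_n)$ is already established as a direct sum of vector spaces by Proposition~\ref{prop:evenbasis}; since each $\Pol_k(\EVEN_n)$ is manifestly $G_{\parity}$-invariant (permuting coordinates permutes the $e_I$ with $|I|=k$ among themselves, and an even sign switch multiplies each $e_I$ by $\prod_{i\in I}\epsilon_i=\pm1$), it is already a decomposition into invariant subspaces. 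So the real content is irreducibility of each $\Pol_k(\EVEN_n)$.

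For irreducibility, I would first handle the $\fS_n$-action alone, then use the sign action to rule out the extra invariant subspaces that $\fS_n$-representation theory allows. Concretely: fix $k<n/2$ and suppose $W\subseteq\Pol_k(\EVEN_n)$ is a nonzero $G_{\parity}$-invariant subspace; pick $0\neq f=\sum_{|I|=k}c_I e_I\in W$. The key device is the averaging/projection operator: for a fixed $k$-set $I_0$, apply suitable group elements and average to isolate a single basis vector. Specifically, using even sign switches one can show that the ``Fourier coefficient'' projections are realized inside the group algebra: averaging $f$ against the characters of $N_{\parity}$ (restricted appropriately — note $N_{\parity}\cong\ZZ_2^{n-1}$, and the $e_I$ for $|I|=k$ lie in distinct character spaces as long as $I\neq I^c$, which holds when $k<n/2$) shows that each individual $e_{I}$ with $c_{I}\neq 0$ lies in $W$. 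Then $\fS_n$ acts transitively on the $k$-subsets of $[n]$, so $e_{I}\in W$ forces all $e_J$ with $|J|=k$ to lie in $W$, giving $W=\Pol_k(\EVEN_n)$. For $k=n/2$ (when $n$ is even), the same argument needs a small modification because $e_I$ and $e_{I^c}$ coincide as functions; one works with the basis $(e_I+e_{I^c})/2$, checks these span a single character space pattern under $N_{\parity}$, isolates one of them by sign-averaging, and then uses transitivity of $\fS_n$ on the $\binom{n}{n/2}/2$ unordered complementary pairs. The main obstacle is getting the sign-averaging step exactly right: one must check that the even-sign-switch subgroup $N_{\parity}$ (not the full $\ZZ_2^n$) still separates the relevant monomials, i.e. that $e_I$ and $e_J$ with $|I|=|J|=k<n/2$ lie in different $N_{\parity}$-isotypic components unless $I=J$ — this is where the constraint $k<n/2$ (so $I\neq I^c$, hence $I\triangle J\neq[n]$ and the corresponding character of $N_{\parity}$ is nontrivial) is used.

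For the final implication, order the irreducible summands by dimension. By the dimension formula, $\dim\Pol_k(\EVEN_n)=\binom{n}{k}$ for $k<n/2$ and $\tfrac12\binom{n}{n/2}$ for $k=n/2$; in all cases $\dim\Pol_k(\EVEN_n)\geq D_{n,k}$, with $D_{n,k}$ as in~\eqref{eq:Dnk} chosen precisely so that it is a lower bound for $\dim\Pol_j(\EVEN_n)$ for every $j\geq k$ (one checks $\binom{n}{j}\geq\binom{n}{k}$ and $\binom{n}{j},\tfrac12\binom{n}{n/2}\geq\min(\binom{n}{k},\tfrac12\binom{n}{n/2})$ for $k\leq j\leq n/2$, using $k\leq n/2$). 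Hence if $\dim V<D_{n,k}$ then $\dim V<\dim\Pol_j(\EVEN_n)$ for all $j\geq k$, so by Proposition~\ref{prop:lowdiminvsubspace} the invariant subspace $V$ is contained in the sum of the smaller irreducibles, namely $\Pol_0(\EVEN_n)\oplus\dots\oplus\Pol_{k-1}(\EVEN_n)=\Pol_{\leq k-1}(\EVEN_n)$, as claimed. (A minor technical point: Proposition~\ref{prop:lowdiminvsubspace} as stated sorts summands in nondecreasing order of dimension; since the $\Pol_k$ are not automatically sorted this way, I would either invoke it after reindexing, or simply re-run its one-line argument directly — any irreducible subrepresentation of $V$ is $G_{\parity}$-isomorphic to some $\Pol_j$, and a dimension comparison forces $j<k$, after which $V$ lands in the corresponding sum of isotypic components.)
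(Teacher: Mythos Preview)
Your proposal is correct and follows essentially the same strategy as the paper: show each $\Pol_k(\EVEN_n)$ is irreducible by using the sign action to isolate a single basis monomial from an arbitrary nonzero element and then invoking $\fS_n$-transitivity on $k$-subsets, and finally deduce the inclusion $V\subseteq\Pol_{\leq k-1}(\EVEN_n)$ from Proposition~\ref{prop:lowdiminvsubspace} by a dimension count. The only difference is in how the monomial is isolated: you average over all of $N_{\parity}$ against a character (the standard isotypic projection), whereas the paper applies the explicit group-algebra element $\prod_{i=k+2}^n(\id+\epsilon_{k+1,i})$ to kill all monomials except $x_1\cdots x_k$; these are two implementations of the same projection idea, yours being slightly more conceptual and the paper's slightly more hands-on.
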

\begin{proof}
    It is easy to see that $\Pol_k(\EVEN_n)$ is invariant for each $k=0,1,\ldots,\lfloor n/2\rfloor$. It remains to show that 
    each of these is irreducible.
For any $k\neq \ell \in [n]$, let $\epsilon_{k,\ell} \in G_{\parity}$ be defined by $\epsilon_{k,\ell} = \diag(1,\dots,-1,\dots,-1,\dots,1)$ where all the entries are equal to $1$ except the entries in position $k$ and $\ell$ which are equal to $-1$. Given an element $p \in \cF(\EVEN_n,\RR)$ we denote by $(\id + \epsilon_{k\ell}) \cdot p$ the polynomial $p + \epsilon_{k\ell} \cdot p$. Observe that whenever $I\subset [n]$, then
    \[ (\id+\epsilon_{k\ell})\cdot x^I = x^I + \epsilon_{k\ell} \cdot x^I = \begin{cases} 2 x^I & \textup{if ($k\in I$ and $\ell\in I$) or ($k\notin I$ and 
                $\ell\notin I$)}\\
0 & \textup{if either exactly one of $k\in I$ and $\ell\in I$ occur}.\end{cases}\]
    
                Now fix some arbitrary $k < n/2$ (we deal with the case $n = 2k$ separately)
                and let $V$ be a (non-zero) invariant subspace of $\Pol_k(\EVEN_n)$. Let $p$ be a non-zero element of $V$.
                By the invariance of $V$ under the permutation action, we can assume that the coefficient of the monomial $x_1x_2\cdots x_k$ in $p$ is non-zero, and so $p$ is of the form: $p(x) = c x_1\cdots x_k + \sum_{|I|=k, I\neq\{1,2,\ldots,k\}} c_I x^I$ where $c\neq 0$.
                We will show that necessarily $V=\Pol_k(\EVEN_n)$. We first show that 
            \begin{equation}
                \label{eq:dualid}
        \left[\prod_{i=k+2}^n(\id+\epsilon_{k+1,i})\right]\cdot p(x)  
         = 2^{n-k-1}x_1x_2\cdots x_k.
     \end{equation}
     Once this is established we will know, by the $\fS_n$-invariance of $V$, that $V$ is equal to $\Pol_k(\EVEN_n)$.

         To establish~\eqref{eq:dualid}, first note that if $i\in \{k+2,k+3,\ldots,n\}$ then 
\[ (\id+\epsilon_{k+1,i})(x_1x_2\cdots x_k) = 2x_1x_2\cdots x_k \]
because
         neither of $x_i$ and $x_{k+1}$ appear in $x_1x_2\cdots x_k$. It remains to check that every other monomial of degree $k$
         is in the kernel of $\left[\prod_{i=k+2}^n(\id+\epsilon_{k+1,i})\right]$. Consider any other monomial $x^{I}$, i.e.~$I\subset \{1,2,\ldots,n\}$
         with $|I|=k$ and for which there is some $\ell\in I$ with $\ell \geq k+1$. Consider two cases, first the case where $k+1\notin I$. Then there is 
         some $\ell \geq k+2$ such that $\ell \in I$. But then $(\id+\epsilon_{k+1,\ell})\cdot x^I = 0$ and so since the $\epsilon_{i,j}$ commute, 
         $\left[\prod_{i=k+2}^{n}(\id+\epsilon_{k+1,i})\right]\cdot x^I = 0$. Now suppose $k+1\in I$. Then there is some $\ell \geq k+2$
         such that $\ell \notin I$. This is because if there were no such $\ell$ then we must have $I\supseteq\{k+1,k+2,\ldots,n\}$ which cannot have cardinality 
         $k$ since we assumed $k<n/2$. It then follows that $(\id+\epsilon_{k+1,\ell})\cdot x^I = 0$ and so that $\left[\prod_{i=k+2}^{n}(\id+\epsilon_{k+1,i})\right]\cdot x^I = 0$. 

         Finally consider the case when $n=2k$. In this situation since $V$ is invariant under the permutation action we can assume 
         \[ p(x) = c(x_1\cdots x_{n/2} + x_{n/2+1}\cdots x_n) + \sum_{
\substack{|I|=n/2, I\neq\{1,2,\ldots,n/2\}\\ I\neq \{n/2,n/2+1,\dots,n\}}} c_I (x^I + x^{I^c}).\]
         Applying the same argument as above, we see that 
         \[ \left[\prod_{i=k+2}^{n}(\id+\epsilon_{k+1,i})\right]\cdot p(x) = 2^{n-n/2-1}\left(x_1x_2\cdots x_{n/2} + x_{n/2+1}x_{n/2+2}\cdots x_{n}\right).\]
         Since the action of $\fS_n$ on 
\[ x_1x_2\cdots x_{n/2}+x_{n/2+1}x_{n/2+2}\cdots x_n \]
 generates a basis for $\Pol_{n/2}(\EVEN_n)$ we can 
         conclude that $V = \Pol_{n/2}(\EVEN_n)$.

The second part of the theorem is a direct consequence of Proposition
\ref{prop:lowdiminvsubspace} on low-dimensional invariant subspaces. When $n$
is odd note that $\dim \Pol_0(\EVEN_n) < \dim \Pol_1(\EVEN_n) < \dots < \dim
\Pol_{\lfloor n/2 \rfloor} (\EVEN_n)$ with $\dim \Pol_k(\EVEN_n) =
\binom{n}{k}$. Thus any invariant subspace $V$ of $\cF(\EVEN_n,\RR)$ with 
$\dim V < \dim \Pol_k(\EVEN_n) = \binom{n}{k} = D_{n,k}$ must be contained in
$V_{k-1}=\Pol_0(\EVEN_n) \oplus \dots \oplus \Pol_{k-1}(\EVEN_n)$ and thus
consists of polynomials of degree at most $k-1$.
In the case where $n$ is even we have $\dim \Pol_{n/2}(\EVEN_n) = \frac{1}{2} \binom{n}{n/2}$. 
Thus any invariant subspace $V$ with $\dim V <
\min\left(\binom{n}{k},\frac{1}{2} \binom{n}{n/2}\right) = D_{n,k}$ must be
contained in $\Pol_{\leq k-1}(\EVEN_n) = \Pol_0(\EVEN_n) \oplus \dots \oplus \Pol_{k-1}(\EVEN_n)$.
\if\siam1 \qquad \fi
\end{proof}


\section{The cut polytope}
\label{sec:cut}

\subsection{Definitions}

The maximum cut problem on a graph $G=(V,E)$ with $V=\{1,\dots,n\}$ and weights $w_{ij}$ for $ij \in E$ is the problem of labeling each vertex $i \in V$ with a label $x_i = +1$ or $x_i = -1$ in such a way that the total weight of edges connecting two vertices with a different label is maximized. This problem can be written as follows:
\begin{equation}
\label{eq:maxcut-quadform}
\begin{array}{ll}
\text{maximize} & \sum_{ij \in E} w_{ij} (1-x_i x_j)/2\\
\text{subject to} & x \in \{-1,1\}^n.
\end{array}
\end{equation}
Note that for a given labeling $x_i = \pm 1$ of vertices, the quantity $(1-x_i x_j)/2$ is equal to 1 if $i$ and $j$ have different labels, and 0 otherwise. The formulation \eqref{eq:maxcut-quadform} shows that the maximum cut problem is the problem of maximizing a \emph{quadratic} form on the hypercube $\{-1,1\}^n$. Using standard techniques, one can convert this problem into a \emph{linear} program, by working in a lifted space.
Indeed it is not hard to see that the problem \eqref{eq:maxcut-quadform} is equivalent to the problem below:
\begin{equation}
\label{eq:maxcut-lp}
\begin{array}{ll}
\text{maximize} & \sum_{ij \in E} w_{ij} (1-X_{ij})/2\\
\text{subject to} & X = xx^T \quad \text{for some } x \in \{-1,1\}^n.
\end{array}
\end{equation}
Note that the objective is now linear in the variable $X$. Define the cut polytope $\CUT_n$ as the convex hull of all outer products $xx^T$ for $x \in \{-1,1\}^n$:
\begin{equation}
 \label{eq:cutpolytope}
 \CUT_n = \conv \left\{ xx^T \; : \; x \in \{-1,1\}^n \right\}.
\end{equation}
The formulation \eqref{eq:maxcut-lp} shows that the maximum cut problem is a linear program over the cut polytope $\CUT_n$. Note that the cut polytope is a $n(n-1)/2$-dimensional polytope in the space $\S^n$ of $n\times n$ symmetric matrices.

\subsection{Sum-of-squares relaxations}
\label{sec:cutsos}

In this section we review the sum-of-squares relaxations of the cut polytope as described for example in \cite{laurent2003lower}. The construction we describe here actually applies to general polytopes $P$ of the form:
\begin{equation}
\label{eq:PconvxxT}
 P = \conv\left\{xx^T \; : \; x \in X\right\} \subset \S^n
\end{equation}
where $X$ is a finite set in $\RR^n$. The construction of the relaxation is very similar to the one described in the introduction, Section \ref{sec:intro-sos}, except that we work with quadratic forms instead of linear forms.
 Note that a polytope $P$ of the form \eqref{eq:PconvxxT} can be seen as the set of second-order moments of probability distributions on $X$:
\begin{equation}
\label{eq:X2nd}
\begin{aligned}
P = \Biggl\{ (E(e_{ij}))_{ij} \; : \; &  E \in \cF(X,\RR)^* \text{ where } \\
& \quad E(f) = \int_{X} f(x) d\mu(x) \text{ for some prob. measure $\mu$ on $X$} \Biggr\}
\end{aligned}
\end{equation}
where $e_{ij}$ is the element of $\cF(X,\RR)$ defined by $e_{ij}(x) = x_i x_j$. Given a subspace $V$ of $\cF(X,\RR)$ we can thus define the following relaxation of $P$ in the same way we did in Section \ref{sec:intro-sos}:
\begin{equation}
\label{eq:TH2VX}
\TH^{(2)}_V(X) := \Biggl\{ (E(e_{ij}))_{ij} \; : \; E \in \cF(X,\RR)^* \text{ where } E(1) = 1, \; E(f^2) \geq 0 \; \forall f \in V \Bigr\}.
\end{equation}
By comparing \eqref{eq:X2nd} and \eqref{eq:TH2VX} it is clear that $P\subseteq \TH_V^{(2)}(X)$.
One can show, like in Theorem \ref{thm:intro-soslifts} that the relaxation is tight $\TH^{(2)}_V(X) = P$ if for any quadratic form $q$ on $\RR^n$, $q_{\max}- q$ is $V$-sos, where $q_{\max} = \max_{x \in X} q(x)$.

When $X = \{-1,1\}^n$, the convex body $\TH^{(2)}_V(\{-1,1\}^n)$ is a relaxation of the cut polytope. When $V = \Pol_{\leq k}(\{-1,1\}^n)$ is the space of polynomials of degree at most $k$, we will denote the relaxation by $Q_k(\CUT_n)$:
\begin{equation}
\label{eq:defQkCUTTH2V}
Q_k(\CUT_n) = \TH^{(2)}_{V}(\{-1,1\}^n) \quad \text{ where } \quad V = \Pol_{\leq k}(\{-1,1\}^n).
\end{equation}
The relaxation $Q_k(\CUT_n)$ is known as the ``node-based'' relaxation and is usually described in the literature in terms of explicit moment matrices, see e.g., \cite{laurent2004semidefinite}. In fact if we use the basis of $\Pol_{\leq k}(\{-1,1\}^n)$ formed by square-free monomials of degree up to $k$, we get that $Q_k(\CUT_n)$ can be written as:
\begin{equation}
\label{eq:cutsosmom}
 Q_k(\CUT_n) = \left\{ z \in \RR^{\binom{n}{2}} \; : \; \exists (y_I)_{|I| \leq 2k} \text{ such that } 
\begin{array}{l}
y_{\emptyset} = 1\\
y_{ij} = z_{ij}, \;\; \forall i < j\\
\cM_k(y) \succeq 0
\end{array} \right\}
\end{equation}
where $(y_I)_{|I|\leq 2k}$ is a vector indexed by subsets $I\subseteq [n]$ of cardinality $\leq 2k$ and where $\cM_k(y)$ is the moment matrix associated to $y$ defined by:
\[ \cM_k(y)_{I,J} = y_{I \triangle J} \quad \forall I, J \subseteq [n], \; |I|, |J| \leq k \]
where $I\triangle J$ denotes symmetric difference. Laurent showed in \cite{laurent2003lower} that when $k \leq \lfloor n/2 \rfloor$ we have $Q_k(\CUT_n) \neq \CUT_n$.
\begin{theorem}[Laurent, \cite{laurent2003lower}]
\label{thm:lb-sos-cut}
For $k \leq \lfloor n/2 \rfloor$, the inclusion $\CUT_n \subset Q_k(\CUT_n)$ is strict.
\end{theorem}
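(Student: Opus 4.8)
The plan is to exhibit, for every $n$, a single point lying in $Q_{\lfloor n/2\rfloor}(\CUT_n)$ but not in $\CUT_n$. Since the relaxations are nested, $\CUT_n\subseteq Q_{\lfloor n/2\rfloor}(\CUT_n)\subseteq Q_k(\CUT_n)$ for every $k\leq\lfloor n/2\rfloor$, so strictness for $k=\lfloor n/2\rfloor$ immediately yields strictness for all $k\leq\lfloor n/2\rfloor$. Write $k=\lfloor n/2\rfloor$. By the moment-matrix description of $Q_k(\CUT_n)$, it is enough to produce a vector $y=(y_I)_{|I|\leq 2k}$ with $y_\emptyset=1$ and $\cM_k(y)\succeq 0$ whose degree-two part defines a matrix $X^\ast$ (with $X^\ast_{ii}=1$ and $X^\ast_{ij}=y_{\{i,j\}}$ for $i\neq j$) that is not in $\CUT_n$. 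Since the moment-matrix condition is convex and equivariant for the action of $\fS_n$ and of the sign flip $x\mapsto -x$, and the target $X^\ast$ we seek is fixed by both, a standard averaging lets us restrict to certificates $y$ of the form $y_I=f(|I|)$, where $f\colon\{0,1,\dots,2k\}\to\RR$ satisfies $f(0)=1$ and $f(m)=0$ for odd $m$; then $X^\ast_{ij}=f(2)$ for all $i\neq j$.

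The first step is to fix the separating inequality. Suppose $n$ is odd. For every $x\in\{-1,1\}^n$ the integer $\sum_i x_i$ is odd, hence $(\sum_i x_i)^2\geq 1$; since $(\sum_i x_i)^2=n+2\sum_{i<j}x_ix_j$ on $\{-1,1\}^n$, the inequality $\sum_{i<j}X_{ij}\geq -(n-1)/2$ is valid (in fact facet-defining) for $\CUT_n$. Therefore any symmetric $X^\ast$ with $\binom{n}{2}f(2)<-(n-1)/2$, i.e.\ with $f(2)<-1/n$, violates this inequality and so lies outside $\CUT_n$; a convenient concrete target is $f(2)=-1/(n-1)$, the value for which $X^\ast$ is the Gram matrix of the vertices of a regular simplex. (For $n$ even one proceeds analogously, using a hypermetric inequality $\sum_{i,j}b_ib_j X_{ij}\geq 1$ obtained from an integer vector $b$ with $\sum_i b_i$ odd, such as $b=(1,\dots,1,2)$, in place of the parity inequality.)

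The crux is then to choose the remaining free values $f(4),f(6),\dots,f(2k)$, with $f(2)$ fixed just below $-1/n$, so that $\cM_k(y)\succeq 0$. Because $\cM_k(y)_{I,J}=f(|I\triangle J|)$ depends only on $|I\triangle J|$, the matrix $\cM_k(y)$ commutes with the $\fS_n$-action permuting index sets, hence block-diagonalizes in a symmetry-adapted basis. Using the isotypic decomposition of $\Pol_{\leq k}(\{-1,1\}^n)$ under $\fS_n$ --- its degree-$j$ part is the permutation module on $j$-element subsets, built from the Specht modules $S^{(n-i,i)}$ with $0\leq i\leq j$ --- one finds that, after this change of basis, $\cM_k(y)$ is block-diagonal with blocks of dimension at most $k+1$, each of whose entries is an explicit real linear combination of $f(0),f(2),\dots,f(2k)$. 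Hence $\cM_k(y)\succeq 0$ if and only if each of these finitely many blocks is positive semidefinite --- a small system of explicit inequalities. One then selects $f$ so that all of them hold simultaneously; the natural choice realizes each block as a positive rescaling of a Gram matrix built from the values of a family of discrete orthogonal polynomials of Hahn type on $\{0,1,\dots,n\}$, and the required positivity is available precisely because $k\leq\lfloor n/2\rfloor$.

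The main obstacle is this last step: writing down the correct function $f$ and verifying that every symmetry block is positive semidefinite throughout the range $k\leq\lfloor n/2\rfloor$ --- equivalently, that the pseudo-expectation $E$ on $\Pol_{\leq 2k}(\{-1,1\}^n)$ given by $E(x^I)=f(|I|)$ satisfies $E(p^2)\geq 0$ for all $p$ of degree $\leq k$ while $E\big((\sum_i x_i)^2\big)<1$. Everything else --- the reduction to $k=\lfloor n/2\rfloor$, the choice of separating inequality, and the symmetry reduction that replaces $y$ by the one-variable datum $f$ --- is routine.
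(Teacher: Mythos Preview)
The paper does not prove this theorem; it is stated with attribution to Laurent \cite{laurent2003lower} and then used as a black box in the derivation of Theorem~\ref{thm:main-cut-lb}. There is therefore no proof in the paper to compare your proposal against.

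That said, your outline does track Laurent's original argument rather closely: the witness is the $\fS_n$-symmetric matrix $X^\ast$ with off-diagonal entries $-1/(n-1)$, which lies outside $\CUT_n$ (for $n$ odd, because $\binom{n}{2}\cdot(-1/(n-1))=-n/2<-(n-1)/2$), and one certifies $X^\ast\in Q_k(\CUT_n)$ via a symmetric pseudo-moment vector $y_I=f(|I|)$ whose moment matrix block-diagonalizes under the $\fS_n$-action. You are also right that the entire substance of the theorem is the step you flag as unfinished --- choosing $f(4),\dots,f(2k)$ explicitly and verifying that every block of $\cM_k(y)$ is positive semidefinite for all $k\leq\lfloor n/2\rfloor$. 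Laurent carries this out with a specific closed-form choice of $f$ and a direct eigenvalue computation; the calculation is genuinely delicate and does not reduce to a one-line appeal to orthogonality of Hahn-type polynomials. As written, then, your proposal is a correct high-level plan matching the intended route, but not a proof: what you call ``the main obstacle'' is essentially the whole theorem, and nothing in your sketch indicates how to overcome it.
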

Laurent conjectured in \cite{laurent2003lower} that the relaxation is actually tight for $k = \lceil n/2 \rceil$. This conjecture was proved recently in \cite{fawzi2015sparse}.

\subsection{Equivariant psd lifts of the cut polytope}
\label{sec:equivariantcut}

\paragraph{Symmetries of the hypercube and the cut polytope} 
Let
\[ C_n = \{-1,1\}^n \]
be the vertices of the hypercube in $\RR^n$. In Example \ref{ex:hypercube-regular-orbitope} we saw that $\conv(C_n)$ is the orbitope $\conv(N_{\cube} \cdot x_0)$ where $x_0 = \mathbf{1} \in \RR^n$ and $N_{\cube} := \{ \diag(\epsilon) \; : \; \epsilon \in \{-1,+1\}^n \}$. The full symmetry group of the hypercube is:
\[ G_{\cube} = \{ \epsilon h \; : \; \epsilon \in N_{\cube}, h \in \fS_n \} = N_{\cube} \rtimes \fS_n \]
where $\fS_n$ is the group of $n\times n$ permutation matrices. The group $G_{\cube}$ is known as the \emph{hyperoctahedral group} and consists of signed permutation matrices, i.e., permutation matrices where each nonzero entry is either $+1$ or $-1$.
Note that $G_{\cube}$ has the product structure described in Section \ref{sec:regular-orbitopes}, since $G_{\cube} = N_{\cube} \rtimes \fS_n$ and $\fS_n$ is the stabilizer of $x_0 = \mathbf{1} \in \RR^n$.


The group $G_{\cube}$ acts on the space of $n\times n$ symmetric matrices by congruence transformations as follows:
\begin{equation}
 \label{eq:actionGammacube-cut}
 g \cdot X := g X g^T \quad \forall g \in G_{\cube}, \; \forall X \in \S^n.
\end{equation}
It is easy to verify that $\CUT_n$ is invariant under this action of $G_{\cube}$.
Furthermore the sum-of-squares relaxations $Q_k(\CUT_n)$ are also $G_{\cube}$-invariant and using the same ideas as in Appendix \ref{sec:equivariancesoslift}, one can show that the psd lifts \eqref{eq:defQkCUTTH2V}-\eqref{eq:TH2VX} of $Q_k(\CUT_n)$ are $G_{\cube}$-equivariant.

\paragraph{Equivariant psd lifts of $\CUT_n$} In this section we are interested in psd lifts of $\CUT_n$ that are $G_{\cube}$-equivariant. Our main result is to show that any such equivariant lift must have exponential size. To do so we first prove that any $G_{\cube}$-equivariant lift of $\CUT_n$ must essentially be a low-degree sum-of-squares lift like $Q_k(\CUT_n)$. Then using the lower bound of Laurent \cite{laurent2003lower}, this implies an exponential lower bound on any $G_{\cube}$-equivariant psd lift of $\CUT_n$.

\begin{theorem}
\label{thm:lb-sym-cut}
Assume $\CUT_n$ has a $G_{\cube}$-equivariant $\S^d_+$-lift where $d < \binom{n}{k}$ for some $k\leq n/2$. Then the $(k-1)$'st sum-of-squares relaxation of $\CUT_{\lfloor n/2 \rfloor}$ is exact, i.e., $Q_{k-1}(\CUT_{\lfloor n/2 \rfloor}) = \CUT_{\lfloor n/2 \rfloor}$.
\end{theorem}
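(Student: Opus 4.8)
The plan is to feed a $G_{\cube}$-equivariant lift of $\CUT_n$ into the regular-orbitope structure theorem (Theorem~\ref{thm:main-structure-product}), to describe explicitly the low-dimensional $G_{\cube}$-invariant subspaces of the relevant function space, and then to transfer the resulting low-degree sum-of-squares certificates from $n$ variables down to $\lfloor n/2\rfloor$ variables by restricting to a ``diagonal'' sub-hypercube. First I would realise $\CUT_n$ as the orbitope $\conv(G_{\cube}\cdot x_0)$ with $x_0=\mathbf{1}\mathbf{1}^T\in\S^n$ and $G_{\cube}$ acting by congruence, so that $X:=G_{\cube}\cdot x_0=\{xx^T:x\in\{-1,1\}^n\}$ and $\cF(X,\RR)$ is identified with the space of \emph{even} functions on $\{-1,1\}^n$, i.e.\ the span of the monomials $x^I$ with $|I|$ even. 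Applying Theorem~\ref{thm:main-structure-product} to this orbitope (the relevant normal subgroup here is an elementary abelian $2$-group, so all its real irreducibles are one-dimensional and $\alpha_N(d)=1$) yields a $G_{\cube}$-invariant subspace $V\subseteq\cF(X,\RR)$ with $\dim V\le d$ such that for every quadratic form $q$ on $\RR^n$ — equivalently every linear functional on $\S^n$ restricted to $X$ — the nonnegative function $q_{\max}-q$ is $V$-sos on $X$.

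The second step is to pin down the $G_{\cube}$-invariant subspaces of $\cF(X,\RR)$. I would show that $\cF(X,\RR)=\bigoplus_{j\ \mathrm{even}}U_j$ with $U_j=\linspan\{x^I:|I|=j\}$ irreducible under $G_{\cube}$: the sign subgroup acts on the $x^I$ by the pairwise distinct characters $\epsilon\mapsto\prod_{i\in I}\epsilon_i$, and $\fS_n$ acts transitively on the $j$-subsets, so any invariant subspace of $U_j$ is $\{0\}$ or $U_j$ (this is the even part of Lemma~\ref{lem:cubegroupdecomp}). Since $\dim U_j=\binom{n}{j}=\binom{n}{n-j}$, Proposition~\ref{prop:lowdiminvsubspace} applied to this decomposition shows that if $\dim V<\binom{n}{k}$ with $k\le n/2$, then $V$ is contained in the sum of exactly those $U_j$ with $\dim U_j<\binom{n}{k}$, namely those with $j$ even and $j\le k-1$ or $j\ge n-k+1$. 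Equivalently, every $f\in V$ splits as $f=f_{\mathrm{lo}}+f_{\mathrm{hi}}$ where $\deg f_{\mathrm{lo}}\le k-1$ and $f_{\mathrm{hi}}$ is a combination of monomials $x^I$ with $|I^c|\le k-1$.

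The third step transfers this to $\CUT_{\lfloor n/2\rfloor}$. Put $m=\lfloor n/2\rfloor$. Given a quadratic form $q'$ on $\RR^m$, I would form $q$ on $\RR^n$ by $q(x)=\tfrac12 q'(x_1,\dots,x_m)+\tfrac12 q'(x_{m+1},\dots,x_{2m})$ (ignoring $x_n$ when $n$ is odd); then $q_{\max}=q'_{\max}$, and by the previous step $q_{\max}-q=\sum_\alpha(f_\alpha+g_\alpha)^2$ as an identity of even functions on $\{-1,1\}^n$, with $\deg f_\alpha\le k-1$ and each $g_\alpha$ supported on monomials $x^I$ with $|I^c|\le k-1$. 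Restricting this identity to the sub-hypercube $\Delta=\{x:x_{m+i}=x_i\text{ for }i\le m,\ x_n=1\}\cong\{-1,1\}^m$ and reducing with $y_i^2=1$: one has $q|_\Delta=q'$ with the same maximum $q'_{\max}$; each $f_\alpha$ restricts to a polynomial of degree $\le k-1$ in $y$; and — this is the key point — each monomial $x^I$ with $|I^c|\le k-1$ restricts to $y^{I_1\triangle I_2}$, where $I_1=I\cap[m]$ and $I_2=\{i:m+i\in I\}$, and one checks $|I_1\triangle I_2|\le|I^c|\le k-1$ because every coordinate pair $\{i,m+i\}$ lying entirely in $I$ contributes $y_i^2=1$ and cancels. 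Hence $q'_{\max}-q'$ is $\Pol_{\le k-1}(\{-1,1\}^m)$-sos for every quadratic form $q'$, which by the sum-of-squares characterisation of the relaxation is precisely $Q_{k-1}(\CUT_m)=\CUT_m$ (the reverse inclusion being automatic).

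I expect the main obstacle to be this last transfer step: the embedding $\RR^m\hookrightarrow\RR^n$ must be chosen so that duplicating coordinates \emph{simultaneously} collapses both the genuinely low-degree component and the high-degree (``co-low-degree'') component of the certificate down to degree $\le k-1$, and the parity of $n$ must be handled so that the restriction lands on a hypercube of dimension exactly $\lfloor n/2\rfloor$ with the maximum of the quadratic form preserved. A secondary point needing care is justifying Theorem~\ref{thm:main-structure-product} for the orbitope $\CUT_n$ — identifying the stabiliser of $\mathbf{1}\mathbf{1}^T$ and the semidirect structure of $G_{\cube}$, with a minor adjustment when $n$ is even — and checking that ``$V$-sos on $X$'' transports correctly under the identification of $\cF(X,\RR)$ with the even functions on $\{-1,1\}^n$.
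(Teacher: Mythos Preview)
Your proposal is correct and follows essentially the same three-step route as the paper: apply the structure theorem to get a low-dimensional $G_{\cube}$-invariant subspace $V$ furnishing sos certificates for all quadratic forms, use the irreducible decomposition under $G_{\cube}$ to see that $V$ sits inside the low-degree plus ``co-low-degree'' monomials, and then restrict to the diagonal sub-hypercube $x_{m+i}=x_i$ to collapse both pieces to degree $\le k-1$. The only cosmetic differences are that the paper works on all of $\cF(\{-1,1\}^n,\RR)$ rather than just the even part, writes the co-low-degree piece as $e_n(x)h(x)$ with $\deg h\le k-1$ (so the collapse is the one-line observation $e_n|_\Delta=1$, which is equivalent to your inequality $|I_1\triangle I_2|\le|I^c|$), and lifts $q'$ to $\hat q(x)=q'(x_1,\dots,x_m)$ rather than your symmetrised $\tfrac12 q'(\cdot)+\tfrac12 q'(\cdot)$.
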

\begin{proof}
Assume we have a $G_{\cube}$-equivariant psd lift of $\CUT_n$ of size $d$. 
Using arguments very similar to Theorem \ref{thm:main-structure-product} (where linear forms are replaced by quadratic forms) we can show that there exists a $G_{\cube}$-invariant subspace $V$ of $\cF(C_n,\RR)$ with $\dim V \leq d$ such that for any quadratic form $q$ on $n$ variables with $q_{\max} := \max_{x \in C_n} q(x)$ we have:
\begin{equation}
 \label{eq:cube-sos-certificate}
 q_{\max}-q(x) = \sum_{i} f_i(x)^2 \quad \forall x \in C_n
\end{equation}
where each $f_i \in V$. In Lemma \ref{lem:cubegroupdecomp} (cf. below) we show that such an invariant subspace of dimension $d < \binom{n}{k}$, is composed entirely of polynomials of the form $g(x) + e_n(x) h(x)$ where $g$ and $h$ are polynomials of degree at most $k-1$ and $e_n(x) = x_1\cdots x_n$ is the $n$'th elementary symmetric polynomial.

We can use this to show that the $(k-1)$'st sos relaxation of $\CUT_{\lfloor n/2 \rfloor}$ is exact. Assume for simplicity that $n$ is even, $n=2m$ (the argument for $n$ odd is very similar). Let $q$ be an arbitrary quadratic form on $m$ variables. We will show that $q_{\max} - q(x)$ is a sum-of-squares of polynomials of degree at most $k$ on $C_m$ (i.e., it is $\Pol_{\leq k}(C_m)$-sos). Define $\hat{q}$ a quadratic form in $n=2m$ variables by:
\[ \hat{q}(x_1,\dots,x_n) = q(x_1,\dots,x_{m}). \]
Note that the polynomial $\hat{q}$ does not depend on $x_{m+1},\dots,x_n$ and note also that $\max_{x \in C_n} \hat{q}(x) = \max_{x \in C_m} q(x)$, i.e., $\hat{q}_{\max} = q_{\max}$.
From Equation \eqref{eq:cube-sos-certificate} we know that $\hat{q}_{\max}-\hat{q}(x)$ admits a sum-of-squares decomposition 
where each sos term lives in the subspace $V$, i.e., we have:
\begin{equation}
\label{eq:qhatmaxminusqhatx}
 \hat{q}_{\max} - \hat{q}(x) = \sum_{j} (\hat{g}_{j}(x) + e_n(x) \hat{h}_j(x))^2 \quad \forall x \in C_n
\end{equation}
where $\hat{g}_{j} \in \Pol_{\leq k-1}(C_n)$ and $\hat{h}_j \in \Pol_{\leq k-1}(C_n)$ and $e_n(x)$ is the $n$'th elementary symmetric polynomial $e_n(x) = x_1\cdots x_n$.
If we plug $x_{m+1} = x_1, x_{m+2}=x_2, \dots, x_{2m} = x_m$ in Equation \eqref{eq:qhatmaxminusqhatx} we get:
\begin{equation}
\label{eq:qmaxminusqx}
 q_{\max} - q(x) = \sum_{j} (g_j(x) + h_j(x))^2 \quad \forall x \in C_m
\end{equation}
where we used the fact that $e_n(x_1,\dots,x_m,x_1,\dots,x_m) = 1$ for all $x \in C_m$ and where we let 
\[ 
\begin{aligned}
g_j(x_1,\dots,x_m) &= \hat{g}_j(x_1,\dots,x_m,x_1,\dots,x_m)\\
h_j(x_1,\dots,x_m) &= \hat{h}_j(x_1,\dots,x_m,x_1,\dots,x_m)
\end{aligned}
\quad
\forall (x_1,\dots,x_m) \in C_m.
\]
Since $g_j, h_j \in \Pol_{\leq k-1}(C_n)$ it is easy to see that $\hat{g}_j, \hat{h}_j \in \Pol_{\leq k-1}(C_m)$. Equation \eqref{eq:qmaxminusqx} thus shows that $q_{\max}-q$ is $\Pol_{\leq k-1}(C_m)$-sos on $C_{m}$. Since $q$ was an arbitrary quadratic form on $m=n/2$ variables, this shows that the $(k-1)$'st level of the SOS hierarchy of $\CUT_{n/2}$ is exact.
\if\siam1 \qquad \fi
\end{proof}

Like for the parity polytope one can also state a result relating approximate equivariant lifts of the cut polytope and the sum-of-squares hierarchy. To state the result it is convenient to introduce the notion of a $(c,s)$-\emph{approximation} from the paper \cite{chan2013approximate}. Given two real numbers $c\leq s$ we say that an outer-approximation $\widehat{P}$ of $\CUT_n$ achieves a $(c,s)$-approximation of $\CUT_n$ if for any linear form $L$ on $\S^n$ such that $L_{\max} \leq c$ it holds that $\widehat{L_{\max}} \leq s$, where $L_{\max}$ and $\widehat{L_{\max}}$ are respectively the maximum of $L$ on $\CUT_n$ and $\widehat{P}$.
 We can now state the following theorem (we omit the proof since it is very similar to the arguments from the previous proofs):
\begin{theorem}
Assume $\widehat{P}$ is an outer-approximation of $\CUT_n$ which achieves a  $(c,s)$-approximation and which admits a $G_{\cube}$-equivariant psd lift of size $d$.
 If $d < \binom{n}{k}$ for some $k\leq n/2$ then the $(k-1)$'st sum-of-squares relaxation of $\CUT_{\lfloor n/2 \rfloor}$ is a valid $(c,s)$-approximation of $\CUT_{\lfloor n/2 \rfloor}$.
\end{theorem}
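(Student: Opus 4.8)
The plan is to mirror the proof of Theorem~\ref{thm:lb-sym-cut}, but to carry the approximation gap through every step, exactly as the proof of Theorem~\ref{thm:approx-parity-sos} adapts the exact argument for the parity polytope. Write $m = \lfloor n/2 \rfloor$. Fix a linear form $L$ on $\S^m$ with $\max_{X \in \CUT_m} L(X) \leq c$, and let $q(x) := L(xx^T)$ be the corresponding quadratic form on $m$ variables, so that $\max_{x \in C_m} q(x) = \max_{X\in\CUT_m} L(X) \leq c$; our goal is to show $\max_{X \in Q_{k-1}(\CUT_m)} L(X) \leq s$. First I would lift the form to $n$ variables: set $\hat q(x_1,\dots,x_n) := q(x_1,\dots,x_m)$, and let $\hat L$ be the linear form on $\S^n$ with $\hat L(xx^T) = \hat q(x)$. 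Since $\hat q$ ignores the last $n-m$ coordinates, $\max_{X \in \CUT_n} \hat L(X) = \max_{x \in C_n} \hat q(x) = \max_{y \in C_m} q(y) \leq c$, so the $(c,s)$-approximation property of $\widehat{P}$ gives $s' := \max_{X \in \widehat{P}} \hat L(X) \leq s$. As $\CUT_n \subseteq \widehat{P}$, the function $x \mapsto s' - \hat q(x)$ is nonnegative on $C_n$.

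Next I would extract an equivariant sum-of-squares certificate. Applying the quadratic-form version of the factorization theorem (Theorem~\ref{thm:factorization-equivlift}, generalized to outer approximations as in the proof of Theorem~\ref{thm:approx-parity-sos}) to the $G_{\cube}$-equivariant psd lift of $\widehat{P}$ yields $B \in \S^d_+$ and an equivariant map $A : C_n \to \S^d_+$ with $s' - \hat q(x) = \langle A(x), B\rangle$ for all $x \in C_n$ and $A(g\cdot x) = \rho(g) A(x) \rho(g)^T$. Because $G_{\cube} = N_{\cube} \rtimes \fS_n$ with $N_{\cube} \cong \ZZ_2^n$ having only one-dimensional real irreducibles, the argument behind Theorem~\ref{thm:lb-sym-cut} (via Remark~\ref{rem:approx} in the product setting of Theorem~\ref{thm:main-structure-product}) shows that $s' - \hat q$ is a sum of squares of functions from a $G_{\cube}$-invariant subspace $V \subseteq \cF(C_n,\RR)$ with $\dim V \leq d < \binom{n}{k}$. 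By Lemma~\ref{lem:cubegroupdecomp} every function in $V$ has the form $g(x) + e_n(x) h(x)$ with $\deg g, \deg h \leq k-1$, so
\[ s' - \hat q(x) = \sum_j \bigl(\hat g_j(x) + e_n(x)\hat h_j(x)\bigr)^2 \qquad \forall x \in C_n, \]
with $\hat g_j, \hat h_j \in \Pol_{\leq k-1}(C_n)$.

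Finally I would fold this identity back down to $m$ variables. Substituting $x_{m+i} = x_i$ for $i=1,\dots,m$ (and, when $n$ is odd, additionally fixing $x_n = 1$) turns $e_n$ into the constant $1$ and leaves $\hat q$ unchanged as $q$, giving
\[ s' - q(y) = \sum_j \bigl(g_j(y) + h_j(y)\bigr)^2 \qquad \forall y \in C_m, \]
with $g_j, h_j \in \Pol_{\leq k-1}(C_m)$; hence $s' - q$ is $\Pol_{\leq k-1}(C_m)$-sos on $C_m$. Unwinding $Q_{k-1}(\CUT_m) = \TH^{(2)}_{\Pol_{\leq k-1}(C_m)}(C_m)$, every pseudo-expectation $E$ feasible for $Q_{k-1}(\CUT_m)$ satisfies $E(s'-q) \geq 0$, so the value of $L$ at the associated moment point equals $E(q) \leq s' \leq s$. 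Thus $\max_{X \in Q_{k-1}(\CUT_m)} L(X) \leq s$, and since $\CUT_m \subseteq \TH^{(2)}_V(C_m)$ always holds, $Q_{k-1}(\CUT_m)$ is an outer-approximation of $\CUT_m$ achieving a $(c,s)$-approximation, as claimed.

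I expect the main obstacle to be the bookkeeping with the approximation gap: one must verify carefully that $\max_{X\in\CUT_n}\hat L(X) \leq c$ so that the $(c,s)$-property of $\widehat{P}$ is applicable and delivers $s' \leq s$, and then ensure that this constant $s'$ survives both the substitution step and the pseudo-expectation bound for $Q_{k-1}$. Everything else is a faithful transcription of the proofs of Theorems~\ref{thm:lb-sym-cut} and~\ref{thm:approx-parity-sos}, together with Lemma~\ref{lem:cubegroupdecomp}.
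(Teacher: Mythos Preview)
Your proposal is correct and follows precisely the route the paper intends: the paper omits the proof, stating only that it is ``very similar to the arguments from the previous proofs,'' and your argument is exactly the natural splice of the proof of Theorem~\ref{thm:lb-sym-cut} with the approximation bookkeeping of Theorem~\ref{thm:approx-parity-sos}. The lifting of $q$ to $\hat q$, the use of the $(c,s)$-property to bound $s'\le s$, the invocation of Remark~\ref{rem:approx} and Lemma~\ref{lem:cubegroupdecomp}, and the substitution $x_{m+i}=x_i$ (with $x_n=1$ when $n$ is odd) to collapse $e_n$ are all the expected steps, and the final pseudo-expectation argument correctly translates the sos certificate into the bound $\max_{X\in Q_{k-1}(\CUT_m)} L(X)\le s$.
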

\paragraph{Exponential lower bound} If we combine Theorem \ref{thm:lb-sym-cut} with Laurent's $\lfloor n/2 \rfloor$ lower bound on the sum-of-squares hierarchy for the cut polytope we obtain the following exponential lower bound for $G_{\cube}$-equivariant psd lifts of $\CUT_n$. This is Theorem \ref{thm:main-cut-lb} from the introduction which we restate here for convenience.
\begin{reptheorem}{thm:main-cut-lb}
Any $G_{\cube}$-equivariant psd lift of $\CUT_{n}$ must have size $\geq \binom{n}{\lceil n/4 \rceil}$.
\end{reptheorem}
\begin{proof}
Let $m=\lfloor n/2 \rfloor$. We apply Theorem \ref{thm:lb-sym-cut} with $k=\lfloor m/2 \rfloor+1$. Laurent \cite{laurent2003lower} proved that $Q_{k-1}(\CUT_m) \neq \CUT_m$ and thus this means that any $G_{\cube}$-equivariant psd of lift of $\CUT_{n}$ must have size greater than or equal $\binom{n}{k} \geq \binom{n}{\lceil n/4 \rceil}$.
\end{proof}

\paragraph{A lemma on invariant subspaces of functions on the hypercube}

To complete the proof of Theorem \ref{thm:lb-sym-cut}, it remains to study low-dimensional invariant subspaces of $\cF(C_n,\RR)$. It is known that any function $f \in \cF(C_n,\RR)$ can be seen as a square-free polynomial of degree at most $n$. Let $\Pol_k(C_n)$ be the space of homogeneous square-free polynomials of degree $k$. Note that $\dim \Pol_k(C_n) = \binom{n}{k}$ and that:
\[ \cF(C_n,\RR) = \Pol_0(C_n) \oplus \dots \oplus \Pol_n(C_n). \]
We are interested in subspaces of $\cF(C_n,\RR)$ that are $G_{\cube}$-invariant. Recall that $G_{\cube}$ is the group of signed permutation matrices. Thus a subspace $V$ of $\cF(C_n,\RR)$ is $G_{\cube}$-invariant if for any $f \in V, \epsilon \in \{-1,+1\}^n, \sigma \in \fS_n$ the function:
\[ x\mapsto f(\epsilon_1 x_{\sigma(1)},\dots,\epsilon_n x_{\sigma(n)}) \]
is also in $V$. 

It is clear that the subspaces $\Pol_k(C_n)$ are $G_{\cube}$-invariant. The next result shows that these subspaces are actually irreducible under the action of $G_{\cube}$.

\begin{lemma}
    \label{lem:cubegroupdecomp}
    Under the action of $G_{\cube}$, $\cF(C_n,\RR)$ decomposes into irreducible invariant subspaces as
    \[ \cF(C_n,\RR) = \Pol_0(C_n) \oplus \dots \oplus \Pol_n(C_n). \]
    Furthermore, suppose $k < n/2$. Then $\Pol_{n-k}(C_n) \isom e_n(x)\Pol_{k}(C_n)$ where $e_n(x)=x_1\cdots x_n$ is the $n$'th elementary symmetric polynomial. Hence if $V$ is a $G_{\cube}$-invariant subspace 
    with $\dim(V) < \binom{n}{k}$ then every $f\in V$ has the form
    \[ f(x) = g(x) + e_n(x)h(x)\]
    where $g(x)$ and $h(x)$ have degree $\leq k-1$.
\end{lemma}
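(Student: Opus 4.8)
The plan is to follow the same strategy as the proof of Lemma~\ref{lem:parity-invariantsubspaces}; the argument is in fact a bit easier here, because single-coordinate sign flips already lie in $G_{\cube}$. As noted just before the statement, every function on $C_n=\{-1,1\}^n$ is uniquely a square-free polynomial, so the monomials $\{x^I : I\subseteq[n]\}$ form a basis of $\cF(C_n,\RR)$, and grouping them by degree gives the claimed vector-space decomposition $\cF(C_n,\RR)=\bigoplus_{k=0}^n\Pol_k(C_n)$, with $\dim\Pol_k(C_n)=\binom{n}{k}$. A signed permutation sends $x^I$ to $\pm x^{\sigma(I)}$ with $|\sigma(I)|=|I|$, so each $\Pol_k(C_n)$ is $G_{\cube}$-invariant; the real content is that each is irreducible.

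The main step is irreducibility, and I would argue exactly as in Lemma~\ref{lem:parity-invariantsubspaces}. Fix $k$, let $W\subseteq\Pol_k(C_n)$ be a nonzero $G_{\cube}$-invariant subspace, and pick $0\neq p\in W$. For $i\in[n]$ let $\epsilon_i\in G_{\cube}$ be the sign flip negating only the $i$-th coordinate; acting on $\cF(C_n,\RR)$, one checks that $(\id+\epsilon_i)\cdot x^I$ equals $2x^I$ if $i\notin I$ and $0$ if $i\in I$. Using the $\fS_n$-action on $W$ and $p\neq 0$, I may replace $p$ by $\sigma\cdot p$ for a suitable $\sigma\in\fS_n$ so that the monomial $x_1\cdots x_k$ has nonzero coefficient in $p$. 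Applying the commuting operator $\prod_{i=k+1}^n(\id+\epsilon_i)$ then kills every degree-$k$ monomial $x^I$ meeting $\{k+1,\dots,n\}$ and multiplies $x^{\{1,\dots,k\}}$ by $2^{n-k}$, so it sends $p$ to a nonzero scalar multiple of $x_1\cdots x_k$, which therefore lies in $W$. Since $\fS_n$ acts transitively on the $k$-subsets of $[n]$, $W$ contains every degree-$k$ square-free monomial, hence $W=\Pol_k(C_n)$. This step is elementary but is where the work lies.

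Next I would record the duality with $e_n$. On $C_n$ one has $x_i^2=1$, so $e_n(x)\cdot x^I=\big(\prod_{i\in[n]}x_i\big)x^I=x^{I^c}$; thus multiplication by $e_n$ carries the monomial basis of $\Pol_k(C_n)$ bijectively onto that of $\Pol_{n-k}(C_n)$, giving a linear isomorphism $\Pol_k(C_n)\to\Pol_{n-k}(C_n)$ and, in particular, the identity $\Pol_{n-k}(C_n)=e_n(x)\cdot\Pol_k(C_n)$ of subspaces of $\cF(C_n,\RR)$.

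Finally I would deduce the statement about low-dimensional invariant subspaces from Proposition~\ref{prop:lowdiminvsubspace}. Reorder the irreducible summands $\Pol_0(C_n),\dots,\Pol_n(C_n)$ by dimension; since $j\mapsto\binom{n}{j}$ is unimodal, for $k\le n/2$ the summands of dimension strictly below $\binom{n}{k}$ are precisely the $\Pol_j(C_n)$ with $j\le k-1$ or $j\ge n-k+1$. Applying Proposition~\ref{prop:lowdiminvsubspace} with this ordering (and $\dim V<\binom{n}{k}=\dim\Pol_k(C_n)$) confines any $G_{\cube}$-invariant $V$ to $\bigoplus_{j\le k-1}\Pol_j(C_n)\oplus\bigoplus_{j\ge n-k+1}\Pol_j(C_n)$, and by the previous paragraph the second block equals $e_n(x)\cdot\Pol_{\le k-1}(C_n)$. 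Hence every $f\in V$ has the form $g(x)+e_n(x)h(x)$ with $\deg g,\deg h\le k-1$. The only point needing care is that, unlike the parity case, the binomial coefficients genuinely repeat, so the sorted list has two summands $\Pol_j(C_n)$ and $\Pol_{n-j}(C_n)$ of equal dimension at each end; this causes no trouble because Proposition~\ref{prop:lowdiminvsubspace} only needs $\dim V$ to be smaller than the dimension of the first omitted block, which is exactly $\binom{n}{k}$.
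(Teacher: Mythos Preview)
Your proposal is correct and follows essentially the same route as the paper's proof: the same single-coordinate sign-flip operators $(\id+\epsilon_i)$ are used to isolate the monomial $x_1\cdots x_k$ and establish irreducibility, the duality $\Pol_{n-k}(C_n)=e_n(x)\Pol_k(C_n)$ is recorded in the same way, and the final step invokes Proposition~\ref{prop:lowdiminvsubspace} exactly as the paper does. Your discussion of the repeated binomial coefficients is a small elaboration, but the argument is the same.
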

\begin{proof}
    It is easy to see that $\Pol_k(C_n)$ is $G_{\cube}$-invariant for each $k$. It remains to show that 
    each of these is irreducible. 
For any $k \in [n]$, let $\epsilon_k \in G_{\cube}$ be defined by $\epsilon_k = \diag(1,\dots,-1,\dots,1)$ where the $-1$ is in the $k$'th position. If $I \subseteq [n]$ we denote by $x^I$ the monomial $\prod_{i \in I} x_i$. Observe that for a given $k \in [n]$ and $I \subseteq [n]$ we have:
    \[ (\id+\epsilon_k)\cdot x^I = \begin{cases} 2 x^I & \textup{if $k\notin I$}\\
0 & \textup{otherwise}.\end{cases}\]
    In other words the action of $(\id+\epsilon_k)$ on $\cF(C_n,\RR)$ annihilates all monomials involving $x_k$. Similarly the action of
    $\prod_{k\in K}(\id+\epsilon_k)$ on $\cF(C_n,\RR)$ annihilates all monomials involving any $x_k$,  $k\in K$ since the $(\id+\epsilon_k)$ commute.
    Now fix some arbitrary $k$ and let $V$ be a (non-zero)
    invariant subspace of $\Pol_k(C_n)$. We will show that necessarily $V=\Pol_k(C_n)$. Since $V \neq \{0\}$, $V$ contains a nonzero square-free polynomial $p(x)$. By the invariance of $V$ under the permutation action, we can assume that the coefficient of the monomial $x_1x_2\cdots x_k$ in $p$ is non-zero, and so $p$ is of the form: $p(x) = c x_1\cdots x_k + \sum_{|I|=k, I\neq\{1,2,\ldots,k\}} c_I x^I$ with $c\neq 0$.
                Now note that by the previous observation we have:
    \[
        \left[\prod_{i=k+1}^n(\id+\epsilon_{i})\right]\cdot p(x)
         = 2^{n-k}x_1x_2\cdots x_k.\]
         Hence $V$ is a subspace containing $x_1x_2\cdots x_k$ and hence, since $V$ is invariant under the permutation action, it contains every square-free monomial of degree $k$. It follows that 
         $V = \Pol_k(C_n)$ and so $\Pol_k(C_n)$ is irreducible. 

To show the second part of the theorem, we use Proposition \ref{prop:lowdiminvsubspace} from the introduction. Indeed note that $\dim \Pol_k(C_n) = \binom{n}{k}$. Thus Proposition \ref{prop:lowdiminvsubspace} says that if $V$ is an invariant subspace and $\dim V < \binom{n}{k}$ with $k \leq n/2$ then necessarily $V$ is contained in the direct sum
\[ \bigoplus_{i=0}^{k-1} (\Pol_i(C_n) \oplus \Pol_{n-i}(C_n)). \]
Thus this means that any $f \in V$ can be decomposed as:
\[ f = \sum_{i=0}^{k-1} g_i + g_{n-i} \]
where $g_i \in \Pol_i(C_n)$ and $g_{n-i} \in \Pol_{n-i}(C_n)$. Now note that for $i<n/2$, $\Pol_{n-i}(C_n)\isom e_n(x)\Pol_{i}(C_n)$ because multiplication by $e_n(x)$ is an involution of $\cF(C_n,\RR)$ that sends square-free polynomials of degree $i$ to square-free polynomials of degree $n-i$. Thus we can write $g_{n-i}(x) = e_n(x) h_i(x)$ for some $h_i \in \Pol_i(C_n)$. Thus we get that
\[ f(x) = \sum_{i=0}^{k-1} g_i(x) + e_n(x) h_i(x) = g(x) + e_n(x) h(x) \]
where $\deg g \leq k-1$ and $\deg h \leq k-1$.
\if\siam1 \qquad \fi
\end{proof}


\section{Conclusion}

In this paper we presented a general framework to study equivariant psd lifts of \emph{orbitopes} using tools from representation theory. We studied two particular examples in detail, namely the parity polytope and the cut polytope and we derived strong lower bounds for sizes of equivariant psd lifts.

\paragraph{Other examples of orbitopes}
There are clearly other examples of orbitopes that one could study using the framework developed in this paper. In a follow-up paper \cite{polygonsequivariant} (see also  \cite{fawzi2015sparse}) we looked at the case of regular polygons in the plane. We showed that regular $N$-gons in the plane admit equivariant psd lifts of size $O(\log N)$ and we established a matching lower bound. Our construction of the equivariant psd lift relies on finding a \emph{sparse} sum-of-squares certificate for the facet inequalities of the regular $N$-gon.
In the more recent paper \cite{fawzi2015sparse} we also give efficient equivariant psd lifts of certain trigonometric cyclic polytopes. Other examples of orbitopes that are interesting to study are for example the permutahedron and the perfect matching polytope.

\paragraph{Non-polyhedral orbitopes} Also, even though our paper was mainly concerned with orbitopes constructed from finite groups $G$, the structure theorems of Section \ref{sec:orbitopes} also apply when $G$ is a compact subgroup of $GL(\RR^n)$ and $P$ is not necessarily polyhedral. For example, an interesting orbitope initially studied in \cite{sanyal2011orbitopes} is the convex hull of $SO(n)$, where $SO(n)$ is the set of $n\times n$ special orthogonal matrices. In \cite{jamesSOn} it is shown that this convex body is a spectrahedron and an explicit semidefinite description is given. 
The structure theorem of this paper can also be applied to this example and shows that one can study equivariant psd lifts of $\conv(SO(n))$ by studying invariant subspaces of $\cF(SO(n),\RR)$ (the space of real-valued functions on $SO(n)$).

It is interesting that, for the purpose of obtaining lower bounds, our study of the parity polytope already implies an exponential lower bound on the size of equivariant lifts of $\conv(SO(n))$: indeed it is known, by a celebrated result of Horn \cite{horn1954doubly} that $SO(n)$ projects onto $\PAR_n$ via the diagonal map, i.e., $\diag(SO(n)) = \PAR_n$. This can be used to show that any psd lift of $\conv(SO(n))$ equivariant
with respect to an appropriate symmetry group can be turned into a $G_{\parity}$-equivariant psd lift of $\PAR_n$ of the same size. Hence our exponential lower bound on the size of $G_\parity$-equivariant psd lifts of $\PAR_n$ automatically gives 
an exponential lower bound on the size of certain equivariant psd lifts of $\conv(SO(n))$. The details of this argument are given in \cite{jamesSOn}.

\clearpage
\newpage

\appendix


\section{Proof of Theorem \ref{thm:intro-equivariantsoslifts}: equivariance of sum-of-squares lifts when $V$ is $G$-invariant.}
\label{sec:equivariancesoslift}

Let $X$ be a finite set in $\RR^n$ and assume $X$ is invariant under the action of a group $G\subseteq GL(\RR^n)$. We also assume that $\conv(X)$ is full-dimensional. In this appendix, we show that the psd lift \eqref{eq:sosliftV}-\eqref{eq:SOSLV} is $G$-equivariant when the subspace $V$ is chosen to be $G$-invariant.

Assume $V$ is a subspace of $\cF(X,\RR)$ such that any valid linear inequality on $\conv(X)$ has a sum-of-squares certificate from $V$.
Then we saw in Section \ref{sec:intro-sos} that we have the following description of $\conv(X)$:
\begin{equation}
\label{eq:appendix-sosliftV}
\begin{aligned}
\conv(X) = \Biggl\{ (E(e_1),\dots,E(e_n)) \; : \; & E \in \cF(X,\RR)^* \text{ where }\\
& E(1) = 1, \; E(f^2) \geq 0 \; \forall f \in V \Bigr\}.
\end{aligned}
\end{equation}
where for each $i$, $e_i \in \cF(X,\RR)$ is the function defined by $e_i(x) = x_i$. Equation \eqref{eq:appendix-sosliftV} expresses $\conv(X)$ as a psd lift of size $d=\dim V$. Indeed the last constraint on $E$ in \eqref{eq:appendix-sosliftV} is equivalent to saying that the bilinear form $H_{E}$ on $V$ defined by 
\[
H_E:V\times V \rightarrow \RR, \quad H_E(f_1,f_2) = E(f_1 f_2)
\]
 is positive semidefinite. Thus if we identify $\S^{d}$ with bilinear forms on $V$ (by fixing a basis) then Equation \eqref{eq:appendix-sosliftV} can be rewritten as:
\begin{equation}
\label{eq:appendix-sosliftV-2}
\conv(X) = \pi(\S^{d}_+ \cap L)
\end{equation}
where 
\begin{itemize}
\item $L \subset \S^{d}$ is the affine subspace
\[ L := \{ H_{E} \; : \; E \in \cF(X,\RR)^*, E(1)=1 \}, \]
i.e., $L$ the image of the affine space $\{E \in \cF(X,\RR)^*, E(1)=1\}$ under the linear map $E \mapsto H_{E}$;
\item and $\pi$ is the linear map, which given a bilinear form of the form $H_{E}$ for some $E \in \cF(X,\RR)^*$, returns the $n$-tuple $(E(e_1),\dots,E(e_n)) \in \RR^n$ (this linear map $\pi$ is well-defined when $P$ is full-dimensional, since one can show in this case that the functions $e_i$ are all in $\linspan\{f^2:f \in V\}$).
\end{itemize}

We now proceed to show that the psd lift \eqref{eq:appendix-sosliftV-2} satisfies the definition of $G$-equivariance, where $G$ is the automorphism group of $X$.

Since $G$ acts on $\cF(X,\RR)$, it also acts on the dual space $\cF(X,\RR)^*$ as follows: If $E \in \cF(X,\RR)^*$ then we let
\[ (g\cdot E)(f) := E(g^{-1}\cdot f) \quad \forall f \in \cF(X,\RR). \]
Equivariance of the lift \eqref{eq:appendix-sosliftV-2} now follows from the following main lemma:
\begin{lemma}
Given $E \in \cF(X,\RR)^*$ we have for any $g \in G$ and any $f,h \in V$:
\begin{equation}
 \label{eq:MM}
 H_{g\cdot E}(f_1,f_2) = H_{E}(g^{-1}\cdot f_1,g^{-1}\cdot f_2).
\end{equation}
\end{lemma}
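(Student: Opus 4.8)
The plan is to simply unwind the definitions and use the fact that the $G$-action on $\cF(X,\RR)$ is multiplicative. First I would write out the left-hand side: by definition of $H_{g\cdot E}$ and of the dual action of $G$ on $\cF(X,\RR)^*$,
\[
H_{g\cdot E}(f_1,f_2) = (g\cdot E)(f_1 f_2) = E\bigl(g^{-1}\cdot (f_1 f_2)\bigr).
\]
So the whole statement reduces to identifying $g^{-1}\cdot(f_1 f_2)$ with $(g^{-1}\cdot f_1)(g^{-1}\cdot f_2)$ inside $\cF(X,\RR)$.

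Second, I would prove the key fact that the regular representation of $G$ on $\cF(X,\RR)$ respects the pointwise product, i.e.\ it acts by algebra automorphisms: for any $g\in G$ and $f_1,f_2\in\cF(X,\RR)$ and any $x\in X$,
\[
\bigl(g\cdot(f_1 f_2)\bigr)(x) = (f_1 f_2)(g^{-1}\cdot x) = f_1(g^{-1}\cdot x)\, f_2(g^{-1}\cdot x) = (g\cdot f_1)(x)\,(g\cdot f_2)(x),
\]
so $g\cdot(f_1 f_2) = (g\cdot f_1)(g\cdot f_2)$. Applying this with $g^{-1}$ in place of $g$ gives $g^{-1}\cdot(f_1 f_2) = (g^{-1}\cdot f_1)(g^{-1}\cdot f_2)$.

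Finally I would combine the two displays: since $V$ is $G$-invariant, $g^{-1}\cdot f_1$ and $g^{-1}\cdot f_2$ again lie in $V$, so $H_E(g^{-1}\cdot f_1, g^{-1}\cdot f_2)$ is defined, and
\[
H_{g\cdot E}(f_1,f_2) = E\bigl(g^{-1}\cdot(f_1 f_2)\bigr) = E\bigl((g^{-1}\cdot f_1)(g^{-1}\cdot f_2)\bigr) = H_E(g^{-1}\cdot f_1, g^{-1}\cdot f_2),
\]
which is \eqref{eq:MM}.

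There is no real obstacle here: the only point that requires a moment's care is the interchange of the group action with the pointwise product (the ``algebra automorphism'' step), which is immediate from the definition $(g\cdot f)(x)=f(g^{-1}\cdot x)$; and one should note that $G$-invariance of $V$ is what makes both sides well-defined as values of the bilinear form $H_{(\cdot)}$ on $V\times V$. I would then remark (as the section intends) that \eqref{eq:MM} says precisely that the congruence action of $\rho(g)$ on $\S^d$ induced by the basis of $V$ stabilizes the affine subspace $L$ and intertwines $\pi$ with the action of $G$ on $\RR^n$, establishing $G$-equivariance of the lift \eqref{eq:appendix-sosliftV-2}.
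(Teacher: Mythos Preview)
Your proposal is correct and follows essentially the same approach as the paper: unwind the definitions of $H_{g\cdot E}$ and the dual action, then use the one-line computation showing that the regular action on $\cF(X,\RR)$ is multiplicative, $g\cdot(f_1 f_2)=(g\cdot f_1)(g\cdot f_2)$. Your additional remarks about $G$-invariance of $V$ ensuring well-definedness and about the interpretation via congruence by $\rho(g)$ are accurate and match the surrounding discussion in the paper.
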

\begin{remark}
One can interpret the identity \eqref{eq:MM} in matrix terms as follows: Given $g\in G$, let $\theta(g)$ be the $d\times d$ matrix which corresponds to the linear map $f\in V \mapsto g\cdot f$. Define $\rho(g) = \theta(g^{-1})^T$. Then identity \eqref{eq:MM} is the same as:
\begin{equation}
\label{eq:MMM}
 H_{g\cdot E} = \rho(g) H_{E} \rho(g)^T
\end{equation}
where $H_{g\cdot E}$ and $H_{E}$ are interpreted as symmetric matrices of size $d$. $\lozenge$
\end{remark}
\begin{proof}
The following sequence of equalities proves the claim:
\[\begin{aligned}
 H_{g\cdot E}(f_1,f_2) = (g\cdot E)(f_1 f_2) 
&= E(g^{-1}\cdot (f_1 f_2))\\
&\overset{(*)}{=} E((g^{-1} \cdot f_1)(g^{-1} \cdot f_2))
 = H_{E}(g^{-1}\cdot f_1, g^{-1} \cdot f_2). \end{aligned} \]
In Equality (*) we used the fact that the action of $G$ on $\cF(X,\RR)$ satisfies:
\[ g\cdot (f_1 f_2) = (g\cdot f_1) (g\cdot f_2) \]
which can be easily seen since for any $x \in X$ we have:
\[ g\cdot (f_1 f_2)(x) = (f_1 f_2)(g^{-1} \cdot x) = f_1(g^{-1} \cdot x)  f_2(g^{-1} \cdot x) = (g\cdot f_1)(x) (g\cdot f_2)(x). \]
\if\siam1 \qquad \fi 
\end{proof}

Using this lemma it is easy to check that the lift \eqref{eq:appendix-sosliftV-2} satisfies the definition of $G$-equivariance.


\section{Proof of Theorem \ref{thm:factorization-equivlift}: factorization theorem for equivariant psd lifts}
\label{sec:proof_factorization_theorem_equivariant_lifts}

In this appendix we give the proof of the factorization theorem for equivariant psd lifts, Theorem \ref{thm:factorization-equivlift}. Before doing so, we first prove the following factorization theorem for general positive semidefinite lifts from  \cite{gouveia2011lifts}:

\begin{theorem}
\label{thm:generalpsdliftfactorization}
Let $P$ be a polytope in $\RR^n$ and let $X$ be any subset of $P$. Assume $P$ has a psd lift of size $d$, i.e., $P = \pi(\S^d_+ \cap L)$ where $L$ is an affine subspace of $\S^d$. Let $A:X\rightarrow \S^d_+ \cap L$ be any map such that $\pi(A(x)) = x$ for all $x \in X$. Then for any linear function $\ell \in (\RR^n)^*$, there exists $B(\ell) \in \S^d_+$ such that:
\[ \ell_{\max} - \ell(x) = \langle A(x), B(\ell) \rangle \quad \forall x \in X \]
where $\ell_{\max} := \max_{x \in P} \ell(x)$.
\end{theorem}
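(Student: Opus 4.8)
The plan is to isolate a single convex‑analytic fact about the semidefinite cone and then obtain the factorization by a short substitution. The fact I would establish first is the following \emph{key lemma}: if $\varphi:\S^d\to\RR$ is an affine functional that is nonnegative on the spectrahedron $\S^d_+\cap L$, then there exists $B\in\S^d_+$ with $\varphi(Y)=\langle Y,B\rangle$ for every $Y\in L$. Granting this, fix a linear form $\ell\in(\RR^n)^*$ and set $\ell_{\max}=\max_{x\in P}\ell(x)$. Since $\ell\circ\pi$ is linear on $\S^d$, the map $\varphi(Y):=\ell_{\max}-\ell(\pi(Y))$ is affine on $\S^d$; it is nonnegative on $\S^d_+\cap L$ because $\pi(\S^d_+\cap L)=P$ and hence $\ell(\pi(Y))\le\ell_{\max}$ for all $Y\in\S^d_+\cap L$. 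The key lemma then produces $B=B(\ell)\in\S^d_+$ with $\varphi\equiv\langle\,\cdot\,,B\rangle$ on $L$. Finally, for $x\in X$ we have $A(x)\in L$ and $\pi(A(x))=x$, so
\[ \langle A(x),B(\ell)\rangle=\varphi(A(x))=\ell_{\max}-\ell(\pi(A(x)))=\ell_{\max}-\ell(x), \]
which is exactly the asserted identity. Note that only $A(x)\in L$ (and not $A(x)\succeq 0$) is used in this last step.

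The remaining task is to prove the key lemma, which is where essentially all the work lies. The natural tool is conic (semidefinite) duality. I would write $L=\{Y\in\S^d:\langle M_j,Y\rangle=b_j,\ j=1,\dots,m\}$ and let $C\in\S^d$ represent $\ell\circ\pi$, so that $\langle C,Y\rangle=\ell(\pi(Y))$. Then $\ell_{\max}=\max\{\langle C,Y\rangle:Y\in L,\ Y\succeq 0\}$ is the value of a primal semidefinite program whose dual is $\min\{\textstyle\sum_j b_jz_j:\sum_j z_jM_j-C\succeq 0\}$. If the dual is attained at some $z^\star$, set $B:=\sum_j z_j^\star M_j-C\succeq 0$; then for every $Y\in L$,
\[ \langle B,Y\rangle=\sum_j z_j^\star\langle M_j,Y\rangle-\langle C,Y\rangle=\sum_j z_j^\star b_j-\ell(\pi(Y))=\ell_{\max}-\ell(\pi(Y))=\varphi(Y), \]
as needed. (The additive constant $\ell_{\max}$ is absorbed through the term $\sum_j z_j^\star b_j$, so there is no tension between $B$ being linear and $\varphi$ being affine on $L$.)

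The main obstacle is precisely that semidefinite duality need not be strong: a general SDP can have a positive duality gap or fail to attain its dual. I would deal with this in the standard way, by \emph{facial reduction}. Let $F$ be the minimal face of $\S^d_+$ containing the nonempty set $\S^d_+\cap L$; then $\S^d_+\cap L$ meets the relative interior of $F$, and after an orthogonal change of coordinates $F$ is identified with an $\S^r_+$‑block sitting inside $\S^r\subseteq\S^d$. In these coordinates the reduced primal program is strictly feasible, so strong duality holds and the dual optimum is attained; this produces $\bar B\in\S^r_+$ realizing the identity on $\S^d_+\cap L\subseteq F$, and $B:=\bar B\oplus 0\in\S^d_+$ then works for all the points $A(x)$, which all lie in $F$. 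An equivalent duality‑free route is a direct Hahn--Banach separation: if no $B\in\S^d_+$ satisfied $\langle B,\cdot\rangle=\varphi$ on $\S^d_+\cap L$, one could separate the (nonempty) affine set of candidate matrices $B$ from the closed cone $\S^d_+$, and the separating direction would violate the nonnegativity of $\varphi$ on $\S^d_+\cap L$. Either way, the combinatorial content of the theorem is nil; the entire difficulty is this one lemma about the geometry of $\S^d_+$.
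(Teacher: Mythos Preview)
Your proposal is correct and follows essentially the same approach as the paper: both reduce the statement to semidefinite duality for the SDP $\max\{\ell(\pi(Y)):Y\in\S^d_+\cap L\}$ and read off $B(\ell)$ from a dual optimal solution. The paper's proof simply assumes, ``for simplicity,'' that $L$ meets the interior of $\S^d_+$ so that strong duality holds, and remarks parenthetically that the general case can also be handled; you are more explicit about this point, sketching the standard facial reduction that restores strict feasibility. One small remark: your key lemma as stated (``$\varphi(Y)=\langle Y,B\rangle$ for every $Y\in L$'') is slightly stronger than what your facial-reduction argument actually delivers, which is the identity only on $L\cap\operatorname{span}(F)$; but since $A(x)\in\S^d_+\cap L\subseteq F$, this suffices for the theorem, and you correctly note this at the end.
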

\begin{proof}
The proof is mainly an application of SDP duality. Let $\ell \in (\RR^n)^*$. Let $Y_0 \in L$ and let $L_0 = L - Y_0$ be the linear subspace parallel to $L$. The following two SDPs are dual to each other:
\begin{equation}
\label{eq:primaldualellmax}
\begin{array}{ll}
\text{max} & (\ell \circ \pi)(Y)\\
\text{s.t.} & Y \in \S^d_+\\
            & Y - Y_0 \in L_0
\end{array}
\qquad
\qquad
\begin{array}{ll}
\text{min} & -\langle H, Y_0 \rangle\\
\text{s.t.} & -\ell \circ \pi = B + H\\
            & B \in \S^d_+, \; H \in L_0^{\perp}
\end{array}
\end{equation}
We assume for simplicity that the intersection of $L$ with the interior of $\S^d_+$ is nonempty, so that strong duality holds (the general case can also be handled). In this the case the value of the two SDPs above is equal to $\ell_{\max}$. 
Let $B,H$ be the optimal points of the dual (minimization) problem in \eqref{eq:primaldualellmax}. From dual feasibility we have $-\ell \circ \pi = B + H$ and so since $\ell_{\max} = -\langle H, Y_0 \rangle$ we get that:
\[ \ell_{\max} - \ell \circ \pi = B + H - \langle H, Y_0 \rangle. \]
Evaluating this equality at $A(x)$, for any $x \in X$ we get:
\[ \ell_{\max} - \langle \ell, x \rangle = \langle B, A(x) \rangle \]
where we used the fact that $\langle \ell\circ \pi, A(x) \rangle = \langle \ell, x \rangle$ (since $\pi(A(x)) = x$) and $\langle H, A(x) \rangle - \langle H, Y_0 \rangle = 0$.
\if\siam1 \qquad \fi
\end{proof}

We now prove the factorization theorem for equivariant psd lifts, Theorem \ref{thm:factorization-equivlift} from Section \ref{sec:orbitopes} which we restate below for convenience (the proof belows looks at the case where $X$ consists of a single orbit of $G$, i.e., $X = G\cdot x_0$ which are the main focus of this paper -- the proof however can be easily extended to the case where $X$ consists of more than a single orbit, see e.g., \cite[Theorem 2]{gouveia2011lifts}).
\begin{reptheorem}{thm:factorization-equivlift}
Let $G$ be a finite group acting on $\RR^n$ and let $X = G\cdot x_0$ where $x_0 \in \RR^n$.
 Assume $\conv(X) = \pi(\S^d_+ \cap L)$ is a $G$-equivariant psd lift of $\conv(X)$ of size $d$, i.e., there is a homomorphism $\rho:G\rightarrow GL(\RR^d)$ such that conditions (i) and (ii) of Definition \ref{def:equivariantpsdlift} hold. Then there exists a map $A:X \rightarrow \S^d_+$ with the following properties:
\begin{itemize}
\item[(i)] For any linear form $\ell$ on $\RR^n$ there exists $B(\ell) \in \S^d_+$ such that if we let $\ell_{\max} := \max_{x \in X} \ell(x)$ we have:
\[ \ell_{\max} - \ell(x) = \langle A(x), B(\ell) \rangle \quad \forall x \in X. \]
\item[(ii)] The map $A$ satisfies the following equivariance relation:
\[ A(g\cdot x) = \rho(g)A(x)\rho(g)^T \quad \forall x \in X, \; \forall g \in G. \]
In particular if $H$ denotes the stabilizer of $x_0$, then we have:
\begin{equation}
\label{eq:Ax0stabilizer2}
 A(x_0) = \rho(h) A(x_0) \rho(h)^T \quad \forall h \in H.
\end{equation}
\end{itemize}
Furthermore, the representation $\rho:G\rightarrow GL(\RR^d)$ can be taken to be orthogonal, i.e., $\rho(g) \in O(\RR^d)$ for all $g \in G$.
\end{reptheorem}
\begin{proof}
We first treat the case where the stabilizer of $x_0$ is $\{1_G\}$. In this case the proof is almost trivial: Let $A(x_0)$ be any point in $\S^d_+ \cap L$ such that $\pi(A(x_0)) = x_0$. Define, for any $g \in G$, $A(g\cdot x_0) := \rho(g) A(x_0) \rho(g)^T$. Note that $A(g\cdot x_0) \in \S^d_+ \cap L$ by the definition of an equivariant psd lift (Definition \ref{def:equivariantpsdlift}). Also note that 
\begin{equation}
\label{eq:appB-Agx0-proof}
 \pi(A(g\cdot x_0)) = \pi(\rho(g)A(x_0)\rho(g)^T) \overset{(a)}{=} g\cdot \pi(A(x_0)) \overset{(b)}{=} g\cdot x_0
\end{equation}
where in $(a)$ we used the definition of equivariant psd lift, and in $(b)$ we used the fact that $\pi(A(x_0)) = x_0$. Since $X = G\cdot x_0$, Equation \eqref{eq:appB-Agx0-proof} shows that $\pi(A(x)) = x$ for all $x \in X$. Thus we can use Theorem \ref{thm:generalpsdliftfactorization} with our choice of map $A$ to show that Property (i) of the statement holds. Note that Property (ii) holds by construction of the map $A$.

We now treat the general case. Let $H$ be the stabilizer of $x_0$. Let $A_0$ be any point in $\S^d_+ \cap L$ such that $\pi(A_0) = x_0$ and define:
\[ A(x_0) = \frac{1}{|H|} \sum_{h \in H} \rho(h) A_0 \rho(h)^T. \]
Now we extend the map $A$ to the whole $X$ by letting $A(x) = \rho(g) A(x_0) \rho(g)^T$ where $g$ is any element of $G$ such that $g\cdot x_0$. Note that this is well-defined since if $g\cdot x_0 = g'\cdot x_0$, we have, with $h = g^{-1} g' \in H$:
\[ \begin{aligned}
\rho(g') A(x_0) \rho(g')^T &= \rho(gh) A(x_0) \rho(gh)^T \\
&= \rho(g) \rho(h) A(x_0) \rho(h)^T \rho(g)^T = \rho(g) A(x_0) \rho(g)^T.
\end{aligned}
\]
It is easy to verify, like in the previous case, that the map we just defined satisfies $\pi(A(x)) = x$ for all $x \in X$. Thus by applying Theorem \ref{thm:generalpsdliftfactorization} with our choice of map $A$ we get that Property (i) of the statement holds. Also Property (ii) holds by construction of the map $A$.

Note that we can assume $\rho(g)$ to be orthogonal for any $g \in G$, by a simple change of basis: By Proposition \ref{prop:changebasis-orthrep} let $Q$ be an invertible matrix such that $Q \rho(g) Q^{-1}$ is orthogonal. By letting $\hat{\rho}(g) = Q \rho(g) Q^{-1}$, $\hat{A}(x) = Q A(x) Q^T$, $\hat{B} = Q^{-T} B Q^{-1}$ we have $\langle \hat{A}(x), \hat{B} \rangle = \langle A(x), B \rangle$, $\hat{A}(g\cdot x) = \hat{\rho}(g) \hat{A}(x) \hat{\rho}(g)^T$ and $\hat{\rho}(g) \hat{\rho}(g)^T = I_d$ which is what we need.
\if\siam1 \qquad \fi
\end{proof}

\bibliography{../../bib/psd_lifts}

\if\siam1
\bibliographystyle{siam}
\else
\bibliographystyle{alpha}
\fi

\end{document}